\newcommand{\Z}{\ensuremath{\mathbb{Z}}}
\newcommand{\R}{\ensuremath{\mathbb{R}}}
\newcommand{\C}{\ensuremath{\mathbb{C}}}
\newcommand{\T}{\ensuremath{\mathbb{T}}}
\renewcommand{\leq}{\ensuremath{\leqslant}}
\renewcommand{\geq}{\ensuremath{\geqslant}}
\newcommand{\n}{\noindent}
\newcommand{\qed}{\hfill \vrule height6pt  width6pt depth0pt}
\newcommand{\norm}[1]{ \| #1  \|}
\newcommand{\bnorm}[1]{ \big\| #1  \big\|}
\newcommand{\Bnorm}[1]{ \Big\| #1  \Big\|}
\newcommand{\bgnorm}[1]{ \bigg\| #1  \bigg\|}
\newcommand{\Bgnorm}[1]{ \Bigg\| #1  \Bigg\|}
\newcommand{\xra}{\xrightarrow}
\newcommand{\ot}{\otimes}
\newcommand{\ovl}{\overline}
\newcommand{\Rad}{{\rm Rad}}
\newtheorem{thm}{Theorem}[section]
\newtheorem{prop}[thm]{Proposition}
\newtheorem{lemma}[thm]{Lemma}
\newtheorem{remark}[thm]{Remark}
\newenvironment{proof}[1][]{\noindent {\it Proof #1} : }{\hbox{~}\qed
\smallskip
}
\numberwithin{equation}{section}
\begin{document}
\selectlanguage{english}
\title{\bfseries{Unconditionality, Fourier multipliers and Schur multipliers}}
\date{}
\author{\bfseries{C\'edric Arhancet}}

\maketitle

%%%%%%%%%%%%%%%%%%%%%%%%%%%%%%%%%%%%%%%%%%%%%%%%%%%%%%%%%%%%%%%%
%%%%%%%%%%%%%%%%%%%%%%%%%%%%%%%%%%%%%%%%%%%%%%%%%%%%%%%%%%%%%%%%
\begin{abstract}
Let $G$ be an infinite locally compact abelian group. If $X$ is
Banach space, we show that if every bounded Fourier multiplier $T$
on $L^2(G)$ has the property that $T\ot Id_X$ is bounded on
$L^2(G,X)$ then the Banach space $X$ is isomorphic to a Hilbert
space. Moreover, if $1<p<\infty$, $p\not=2$, we prove that there
exists a bounded Fourier multiplier on $L^p(G)$ which is not
completely bounded. Finally, we examine unconditionality from the
point of view of Schur multipliers. More precisely, we give several
necessary and sufficient conditions to determine if an operator
space is completely isomorphic to an operator Hilbert space.
\bigskip
\end{abstract}

%%%%%%%%%%%%%%%%%%%%%%%%%%%%%%%%%%%%%%%%%%%%%%%%%%%%%%%%%%%%%%%%
%%%%%%%%%%%%%%%%%%%%%%%%%%%%%%%%%%%%%%%%%%%%%%%%%%%%%%%%%%%%%%%%

\makeatletter
 \renewcommand{\@makefntext}[1]{#1}
 \makeatother
 \footnotetext{
 This work is partially supported by ANR 06-BLAN-0015\\
 2010 {\it Mathematics subject classification:}
 Primary 43A15, 43A22, 46L07 %  46L51  Noncommutative measure and integration
; Secondary, 46L51. %  46M35       Abstract interpolation of topological vector spaces [See also 46B70]
% 46L07       Operator spaces and completely bounded maps [See also 47L25]
% 43A22 Homomorphisms and multipliers of function spaces on groups, semigroups, etc.
% 43A15 Lp-spaces and other function spaces on groups, semigroups, etc.
\\
{\it Key words and phrases}: locally compact abelian groups,
noncommutative $L^p$-spaces, Fourier multipliers, Schur multipliers,
unconditionality.}

%%%%%%%%%%%%%%%%%%%%%%%%%%%%%%%%%%%%%%%%%%%%%%%%%%%%%%%%%%%%%%%%%%%%%%%%%%%%%%%%%%%%%%%%%%%%%%%%%%%%%%%%%%%%%%%%%%%%%%%%%%%%%%%%%%%%%%%%%%%%%%%%%%%%%%
%%%%%%%%%%%%%%%%%%%%%%%%%%%%%%%%%%%%%%%%%%%%%%%%%%%%%%%%%%%%%%%%%%%%%%%%%%%%%%%%%%%%%%%%%%%%%%%%%%%%%%%%%%%%%%%%%%%%%%%%%%%%%%%%%%%%%%%%%%%%%%%%%%%%%%
%%%%%%%%%%%%%%%%%%%%%%%%%%%%%%%%%%%%%%%%%%%%%%%%%%%%%%%%%%%%%%%%%%%%%%%%%%%%%%%%%%%%%%%%%%%%%%%%%%%%%%%%%%%%%%%%%%%%%%%%%%%%%%%%%%%%%%%%%%%%%%%%%%%%%%
\section{Introduction}
%%%%%%%%%%%%%%%%%%%%%%%%%%%%%%%%%%%%%%%%%%%%%%%%%%%%%%%%%%%%%%%%%%%%%%%%%%%%%%%%%%%%%%%%%%%%%%%%%%%%%%%%%%%%%%%%%%%%%%%%%%%%%%%%%%%%%%%%%%%%%%%%%%%%%%
%%%%%%%%%%%%%%%%%%%%%%%%%%%%%%%%%%%%%%%%%%%%%%%%%%%%%%%%%%%%%%%%%%%%%%%%%%%%%%%%%%%%%%%%%%%%%%%%%%%%%%%%%%%%%%%%%%%%%%%%%%%%%%%%%%%%%%%%%%%%%%%%%%%%%%
%%%%%%%%%%%%%%%%%%%%%%%%%%%%%%%%%%%%%%%%%%%%%%%%%%%%%%%%%%%%%%%%%%%%%%%%%%%%%%%%%%%%%%%%%%%%%%%%%%%%%%%%%%%%%%%%%%%%%%%%%%%%%%%%%%%%%%%%%%%%%%%%%%%%%%
In \cite[Theorem 1]{DJ}, M. Defant and M. Junge proved the following
theorem (see also \cite[Theorem 1.5]{AB} and \cite[Theorem
8.4.11]{PiW}).
\begin{thm}
\label{Th incond T} Let $X$ be a Banach space. Suppose that there
exists a positive constant $C$ such that for any integer $n\in
\mathbb{N}$, any complex numbers $t_{-n},\ldots, t_n\in\C$ and any
$x_{-n},\ldots, x_n\in X$ we have
\begin{equation}\label{equa uncond T}
\Bgnorm{\sum_{k=-n}^{n}t_k e^{2\pi ik \cdot} \ot
x_k}_{L^2(\T,X)}\leq C \sup_{-n\leq k\leq n} |t_k|
\Bgnorm{\sum_{k=-n}^{n} e^{2\pi ik\cdot}\ot x_k}_{L^2(\T,X)}.
\end{equation}
Then the Banach space $X$ is isomorphic to a Hilbert space.
\end{thm}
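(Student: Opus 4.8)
The plan is to establish the vector‑valued Plancherel estimate $\norm{g}_{L^2(\T,X)}\simeq\big(\sum_k\norm{x_k}^2\big)^{1/2}$, with constants independent of the finitely supported family $(x_k)$ in $X$ (here $g:=\sum_k e^{2\pi ik\cdot}\ot x_k\in L^2(\T,X)$), and then to quote Kwapie\'n's theorem, which says precisely that such a two‑sided estimate characterizes the spaces isomorphic to a Hilbert space — equivalently, that $X$ has type~$2$ and cotype~$2$. A few soft observations come first. The hypothesis passes to closed subspaces, to quotients and to the spaces $L^2(\Omega,X)$ (apply a Fourier multiplier fibrewise and use Fubini), and it is self‑dual: along the isometric inclusion $L^2(\T,X^*)\hookrightarrow L^2(\T,X)^*$ the adjoint of a Fourier multiplier is again a Fourier multiplier with the same $\ell^\infty$‑norm, so $X^*$ also satisfies \eqref{equa uncond T}. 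Moreover, since the partial‑sum and block multipliers are bounded, the trigonometric system is an unconditional decomposition of $L^2(\T,X)$ into copies of $X$, and a Rudin--Shapiro/Salem--Zygmund estimate shows that the multiplier constant of $\ell^\infty_n$ (hence, by duality, of $\ell^1_n$) grows with $n$, so that neither is uniformly finitely representable in $X$; by the Maurey--Pisier theorem $X$ therefore has nontrivial type and nontrivial cotype, and in particular is $K$‑convex.

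The second step compares the left side of \eqref{equa uncond T} with Rademacher averages. Applying the hypothesis with $t_k=\varepsilon_k\in\{-1,1\}$, and once more with the reciprocal sign sequence, gives $\bnorm{\sum_k\varepsilon_k e^{2\pi ik\cdot}\ot x_k}_{L^2(\T,X)}\simeq\norm{g}_{L^2(\T,X)}$ with comparison constants at most $C$, uniformly over the choice of signs; squaring, averaging over $\varepsilon$, using Fubini, and then applying for each fixed $t\in\T$ Kahane's contraction principle to delete the unimodular scalars $e^{2\pi ikt}$, one gets
\[
\norm{g}_{L^2(\T,X)}\simeq\Big(\mathbb{E}_\varepsilon\Bnorm{\textstyle\sum_k\varepsilon_k x_k}_X^2\Big)^{1/2},
\]
with constants depending only on $C$. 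Equivalently, the map carrying Fourier coefficients to Rademacher coefficients is an isomorphism of $L^2(\T,X)$ onto the Rademacher subspace $\Rad(X)\subseteq L^2(\{-1,1\}^{\nn},X)$.

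It remains — and this is the core of the matter — to upgrade the last display to $\big(\mathbb{E}_\varepsilon\norm{\sum_k\varepsilon_k x_k}^2\big)^{1/2}\simeq\big(\sum_k\norm{x_k}^2\big)^{1/2}$, i.e.\ to prove type~$2$ and cotype~$2$. Sign multipliers alone cannot do this, because they only reproduce the Rademacher structure just obtained: the scalar Plancherel theorem already yields $\norm{g}_{L^2(\T,X)}\geq\sup_{\norm{x^*}\leq1}\big(\sum_k|\la x^*,x_k\ra|^2\big)^{1/2}$, and dually against $X^*$ a matching estimate from above, but only in the \emph{weak} $\ell^2$‑norm, and these purely formal arguments turn out to be circular. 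The approach I would take is to transfer the hypothesis to the finite cyclic groups $\Z_N$ by a de~Leeuw‑type argument with no loss in the constant, so that every circulant operator on $\ell^2_N(X)$ is bounded by $C$ — equivalently, the full bounded functional calculus of the cyclic shift $S$ (an isometry of $\ell^2_N(X)$ of order~$N$) has norm at most $C$. Feeding the uniform boundedness of the spectral projections of $S$, the scalar Plancherel identity on $\Z_N$, and the $K$‑convexity and duality from the first step into the comparison of the second step, one arrives at the vector‑valued Plancherel estimate announced at the outset, uniformly in $(x_k)$, and the theorem follows. The genuine obstacle is precisely this assembling step — passing from ``each individual Fourier multiplier is bounded on $L^2(\T,X)$'' to ``the Fourier transform $L^2(\T,X)\to\ell^2(\Z,X)$ is bounded'' — which relies on the quantitative interplay between the multiplier algebra and scalar orthogonality and cannot be obtained by soft duality alone.
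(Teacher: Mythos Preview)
Your second step is correct and is the natural opening move: randomising the signs in the hypothesis and averaging gives
\[
\Bnorm{\sum_k e^{2\pi ik\cdot}\ot x_k}_{L^2(\T,X)}\ \approx\ \Bnorm{\sum_k\varepsilon_k\ot x_k}_{\Rad(X)}
\]
with constants depending only on $C$. But the decisive third step --- upgrading the Rademacher average to $\big(\sum_k\norm{x_k}_X^2\big)^{1/2}$ --- is not carried out. You transfer to $\Z/N\Z$, list $K$-convexity, the spectral projections of the shift and the scalar Plancherel identity as ingredients, and then assert that ``one arrives at the vector-valued Plancherel estimate''. None of these ingredients, singly or in combination, produces type~$2$ or cotype~$2$: the spectral projections are rank-one multipliers already controlled by $C$, the scalar Plancherel identity is what you had on $\T$ to begin with, and $K$-convexity only compares Rademacher sums in $X$ with those in $X^*$, not with $\ell^2$-sums. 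Your own last sentence concedes that this assembling step is ``the genuine obstacle''. As it stands, the proposal is a correct reduction followed by an unfulfilled promise.

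The paper (Proposition~\ref{Prop incond T}) closes the gap along a different route. Rather than aim at the vector Plancherel estimate directly, it first shows that $X$ has property~$(\alpha)$: choosing two disjoint Hadamard sets $(n_i),(m_j)\subset\Z$ and invoking Theorem~\ref{Th transfer Sidon} identifies double Rademacher sums with sums $\sum_{i,j}e^{2\pi i n_i\cdot}\ot e^{2\pi i m_j\cdot}\ot x_{ij}$ in $L^2(\T\times\T,X)$; Lemma~\ref{Lemma passage limite} then approximates these by single-variable sums on $\T$ via $z\mapsto(z,z^k)$, so the one-variable unconditionality hypothesis forces the two-variable one, which is exactly property~$(\alpha)$. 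With property~$(\alpha)$ for $L^2(\T,X)$ in hand, Theorem~\ref{bounded homo is R bounded} upgrades the bounded homomorphism $\varphi\mapsto M_\varphi\ot Id_X$ from the multiplier algebra into $B\big(L^2(\T,X)\big)$ to an $R$-bounded one; in particular the translations $\{\tau_t\ot Id_X:t\in\T\}$ form an $R$-bounded set. Lemma~\ref{Lemma tau imply Hilbert} converts $R$-boundedness of translations into type~$2$ and cotype~$2$ by a short disjoint-support computation, and Kwapie\'n concludes. The idea you are missing is precisely this detour through property~$(\alpha)$ and $R$-boundedness; that is the mechanism which turns uniform boundedness of all multipliers into the square-function estimate you were seeking.
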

This result says that if every bounded Fourier multiplier $T$ on
$L^2(\T)$ has the property that $T\ot Id_X$ is bounded on
$L^2(\T,X)$ then the Banach space $X$ is isomorphic to a Hilbert
space. The paper \cite[Theorem 1]{DJ} contains a generalization to
infinite compact abelian groups. Our first main result is an
extension of this theorem to infinite arbitrary locally compact
abelian groups.
\begin{thm}
\label{Th incond locallyintro} Let $G$ be an infinite locally
compact abelian group and $X$ be a Banach space. If every bounded
Fourier multiplier $T$ on $L^2(G)$ has the property that $T\ot Id_X$
is bounded on $L^2(G,X)$ then the Banach space $X$ is isomorphic to
a Hilbert space.
\end{thm}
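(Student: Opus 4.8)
The plan is to reduce the statement to the case of infinite compact abelian groups, which is covered by the extension of Theorem~\ref{Th incond T} to such groups due to Defant and Junge, the reduction being carried out by vector-valued transference together with the structure theory of locally compact abelian groups. The first step would be to promote the hypothesis to a uniform estimate. Let $\M$ be the Banach space of bounded Fourier multipliers on $L^2(G)$, a closed subspace of $B(L^2(G))$. By hypothesis the linear map $\M\to B(L^2(G,X))$, $T\mapsto T\ot\Id_X$, is well defined, and it has a closed graph: if $T_n\to T$ in $\M$ and $T_n\ot\Id_X\to S$ in $B(L^2(G,X))$, then for every $f\in L^2(G)$ and $x\in X$ the vectors $(T_nf)\ot x$ converge both to $(Tf)\ot x$ and to $S(f\ot x)$, so $S=T\ot\Id_X$. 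By the closed graph theorem there is a constant $C$ with $\norm{T\ot\Id_X}_{B(L^2(G,X))}\leq C\norm{T}$ for every $T\in\M$.

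Next I would use vector-valued forms of de Leeuw's restriction theorems---for quotient groups and for open subgroups---each of which does not increase the relevant multiplier norms and commutes with tensoring by $\Id_X$; combining them along a finite chain, one gets that whenever a group $H$ is obtained from $G$ by repeatedly passing to a quotient or to an open subgroup, every bounded Fourier multiplier $S$ on $L^2(H)$ satisfies $\norm{S\ot\Id_X}\leq C\norm{S}$. When $H=F$ is finite abelian, the bounded Fourier multipliers on $L^2(F)$ are exactly the operators diagonal in the orthonormal basis of characters of $F$, so this says that the character system of $F$ is $C$-unconditional in $L^2(F,X)$.

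Then I would run the structure theorem $G\cong\R^n\times G_1$, where $G_1$ has a compact open subgroup $K_0$. If $n\geq1$, then $\T=\R/\Z$ is a quotient of $G$, so by the previous step every bounded Fourier multiplier on $L^2(\T)$ transfers to $L^2(\T,X)$ with constant $C$; in particular inequality~\eqref{equa uncond T} holds with constant $C$, and Theorem~\ref{Th incond T} gives the conclusion. If $n=0$ and $K_0$ is infinite, then $K_0$ is an infinite compact abelian group which is open in $G$, so every bounded Fourier multiplier on $L^2(K_0)$ transfers to $L^2(K_0,X)$ with constant $C$, and the Defant--Junge theorem for infinite compact abelian groups \cite{DJ} applies to $K_0$. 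If $n=0$ and $K_0$ is finite, then $\Delta:=G/K_0$ is an infinite discrete abelian group; by a standard fact about infinite abelian groups, $\Delta$ has a subgroup isomorphic to $\Z$, to $\Z(p^\infty)$ for some prime $p$, or to an infinite direct sum $\bigoplus_i\Z/m_i\Z$ of nontrivial finite cyclic groups. Since $\Delta$ is discrete, every such subgroup is open in $\Delta$, and in each of the three cases it has finite abelian groups $F_N$ with $|F_N|\to\infty$ as quotients (for $\Z$ and $\bigoplus_i\Z/m_i\Z$) or as subgroups (for $\Z(p^\infty)$ and $\bigoplus_i\Z/m_i\Z$). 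Hence the character systems of the $F_N$ are uniformly $C$-unconditional in $L^2(F_N,X)$.

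In the first two cases this finishes the proof; in the last one, the uniform unconditionality of the character systems of finite abelian groups $F_N$ of unbounded order is exactly the input of the Defant--Junge argument: comparing these character sums with Rademacher sums via Salem--Zygmund-type estimates yields that $X$ has type $2$ and cotype $2$, hence $X$ is isomorphic to a Hilbert space by Kwapie\'n's theorem. The hard part will be the transference step: setting up vector-valued de Leeuw restriction along quotients and open subgroups of an arbitrary locally compact abelian group with genuine control of the constants, and verifying that the finite abelian subquotients used above are actually reached. A further point to be careful about is that the Defant--Junge theorem is formulated for a single infinite compact abelian group, so in the last case one must either rely on the local nature of their argument or, equivalently, assemble the groups $F_N$ into one infinite compact abelian group while preserving the transferred estimates.
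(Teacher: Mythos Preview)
Your overall strategy---closed-graph uniformisation, structure theorem, transference to smaller groups, then an appeal to known results---is sound and parallels the paper's architecture.  There are, however, two points where your outline is looser than it should be, and one substantive difference in approach worth noting.

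First, your transference toolkit is slightly misdescribed.  You invoke ``open subgroups'', but the relevant closed-subgroup transference (the paper's Theorem~\ref{Th transfer subgroups}) already works for arbitrary closed subgroups and is what you actually need: for $n\geq 1$ the copy of $\R$ inside $\R^n\times G_1$ is closed but typically not open.  More seriously, your quotient transference is stated without the compact-kernel hypothesis, yet Proposition~\ref{Prop transfer quotient} requires $H$ compact.  Passing from $G$ to $\T$ (kernel $\Z\times\R^{n-1}\times G_1$) or from $\Z$ to $\Z/N\Z$ (kernel $N\Z$) falls outside this.  For $p=2$ one can rescue these steps by extending a finitely supported symbol on $H^\perp$ continuously to $\widehat{G}$ with the same sup norm and then invoking a vector-valued de~Leeuw restriction (Theorem~\ref{deleuw} for $\R\to\T$); but you should say so explicitly, and for the $\Z\to\Z/N\Z$ step the paper instead uses a direct conditional-expectation argument (see the proof of Proposition~\ref{Th incond discrete}, Case~2).

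Second---and this is the real divergence---you treat the Defant--Junge theorem (for $\T$ and, in its extended form, for arbitrary infinite compact abelian groups) as a black box, whereas the paper deliberately gives an independent proof.  The paper's route replaces the Defant--Junge input by a new lemma (Lemma~\ref{Lemma tau imply Hilbert}): if the translation operators $\{\tau_t\otimes\Id_X : t\in G\}$ are $R$-bounded on $L^2(G,X)$, then $X$ is Hilbertian.  This is combined with a property-$(\alpha)$ argument (Proposition~\ref{Prop incond T}): the multiplier hypothesis forces $X$ to have property~$(\alpha)$, whence the bounded homomorphism $L^\infty(\widehat G)\to B(L^2(G,X))$ is $R$-bounded by Theorem~\ref{bounded homo is R bounded}, so translations are $R$-bounded and Lemma~\ref{Lemma tau imply Hilbert} applies.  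Your approach is shorter if one is willing to quote \cite{DJ}, but it inherits the work hidden there (including, as you note, adapting the argument to a family of finite groups rather than one infinite compact group).  The paper's approach is self-contained and isolates the $R$-boundedness-of-translations lemma, which is of independent interest.
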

Our proof is independent of the work \cite{DJ}.

Suppose $1\leq p \leq \infty$. We denote by $S^p=S^p(\ell^2)$ the
Schatten space. Let $\Omega$ be a measure space. Recall that a
linear map $T\colon L^p(\Omega)\xra{}L^p(\Omega)$ is completely
bounded if $T \ot Id_{S^p}$ extends to a bounded operator $T\ot
Id_{S^p}:L^p(\Omega,S^p)\xra{}L^p(\Omega,S^p)$, see \cite{Pis2}. In
this case, the completely bounded norm
$\norm{T}_{cb,L^p(\Omega)\xra{}L^p(\Omega)}$ is defined by
\begin{equation}\label{defnormecb}
\norm{T}_{cb,L^p(\Omega)\xra{}L^p(\Omega)}=\bnorm{T\ot Id_{S^p}
}_{L^p(\Omega,S^p)\xra{} L^p(\Omega,S^p)}.
\end{equation}
Let $G$ be a locally compact abelian group. If $p=1, 2$ or $\infty$,
it is easy to see that every bounded Fourier multiplier is
completely bounded on $L^p(G)$. If $1<p<\infty$, $p\not=2$, the
situation is different. Indeed, G. Pisier showed the following
theorem (see \cite[Proposition 8.1.3]{Pis2}, \cite[page 181]{Pis3}
and also \cite[Proposition 3.1]{Har}).
\begin{thm}
\label{Th non cb compact} Suppose $1<p<\infty$, $p\not=2$. Let $G$
be an infinite compact abelian group. There exists a bounded Fourier
multiplier on $L^p(G)$ which is not completely bounded.
\end{thm}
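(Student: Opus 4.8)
\medskip
\noindent\textbf{Proof plan.} The plan is to reduce, by transference of multipliers along quotient maps, to the case where $G$ is the circle group $\T$ or an infinite product of finite cyclic groups, and then to exhibit the required multiplier as the indicator function of a suitable lacunary-type set whose behaviour at the Banach-space level and at the operator-space level diverge. For the reduction, note that since $G$ is infinite compact abelian the dual $\widehat G$ is infinite discrete abelian, so it contains either an element of infinite order, hence a subgroup isomorphic to $\Z$, or else (being a torsion group) a subgroup isomorphic to $\bigoplus_{n\geq 1}\Z/d_n\Z$ with each $d_n\geq 2$. Let $H\leq\widehat G$ be such a subgroup, let $K=\widehat H$ (so $K=\T$ or $K=\prod_n\Z/d_n\Z$), and let $\pi\colon G\to K$ be the associated quotient map. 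Then $f\mapsto f\circ\pi$ is a complete isometry from $L^p(K)$ onto the closed linear span $E\subseteq L^p(G)$ of the characters lying in $H$, and the conditional expectation $Q$ of $L^p(G)$ onto $E$ is a completely contractive projection. If $\psi\colon H\to\C$ is the symbol of a Fourier multiplier $T_\psi$ on $L^p(K)$, then extending $\psi$ by zero off $H$ produces on $L^p(G)$ exactly the operator $T_\psi\circ Q$; since $Q$ is completely contractive, $\|T_\psi\circ Q\|_{L^p(G)}=\|T_\psi\|_{L^p(K)}$ and $\|T_\psi\circ Q\|_{cb,L^p(G)}=\|T_\psi\|_{cb,L^p(K)}$. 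So it suffices to construct a bounded, non completely bounded Fourier multiplier on $L^p(K)$, and I may assume henceforth that $G$ is $\T$ or $\prod_n\Z/d_n\Z$.

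Next comes the choice of multiplier. Fix in $\widehat G$ a Hadamard (dissociate) sequence generating an infinite set $\Lambda_0$ — for instance $\{3^k:k\geq 1\}$ when $\widehat G=\Z$, or the canonical generators of the cyclic summands otherwise — and take $\Lambda$ to be a $\Lambda(p)$-set built from $\Lambda_0$ that is \emph{not} completely $\Lambda(p)$ in the sense of Harcharras; such sets exist for every $p\neq 2$ (for instance $d$-fold sum sets $\{\gamma_1\cdots\gamma_d:\gamma_i\in\Lambda_0\text{ pairwise distinct}\}$ with $d$ large relative to $|p-2|$, or Bourgain's maximal $\Lambda(p)$-sets). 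By Rudin's $\Lambda(p)$-theorem, $\Lambda$ is a $\Lambda(q)$-set for all $q<\infty$; combining the $\Lambda\bigl(\max(p,p')\bigr)$-inequality with the contraction $\|f\|_2\leq\|f\|_q$ valid on a probability space for $q\geq 2$, together with duality when $p<2$, the orthogonal Fourier projection $P_\Lambda$ onto the closed linear span of the characters in $\Lambda$ is bounded on $L^p(G)$ (and that span is isomorphic to $\ell^2$). In particular the Fourier multiplier $T_{1_\Lambda}=P_\Lambda$ is bounded on $L^p(G)$.

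It remains to show that $P_\Lambda$ is \emph{not} completely bounded, i.e.\ that $P_\Lambda\ot\Id_{S^p}$ is unbounded on $L^p(G,S^p)$; this is precisely the statement that $\Lambda$ is not completely $\Lambda(p)$, and it is where $p\neq 2$ is used. The closed linear span of the characters of $\Lambda$, although isomorphic to $\ell^2$ as a Banach space, inherits from $L^p(G)$ — via the noncommutative Khintchine--Lust-Piquard inequality applied to $\Lambda_0$ and to the chaos it generates — an operator-space structure assembled from the row and column spaces $R_p,C_p$. When $p=2$ these all coincide with $OH$, on which every bounded diagonal (Schur) map is automatically completely bounded; when $p\neq 2$ they do not, and the completely bounded $\Lambda(p)$-constants of the finite sections of $\Lambda$ grow without bound while their ordinary $\Lambda(p)$-constants stay bounded. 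Hence $\|P_\Lambda\|_{cb,L^p(G)}=\infty$. (When $1<p<2$ one first passes to the dual exponent $p'>2$, using that a Fourier multiplier is completely bounded on $L^p(G)$ if and only if it is completely bounded on $L^{p'}(G)$, with equal norm.) Combined with the first paragraph, this yields a bounded but not completely bounded Fourier multiplier on an arbitrary infinite compact abelian group.

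The step I expect to be the main obstacle is this last one. The transference reduction and the boundedness of $P_\Lambda$ are soft — the latter is just Rudin's $\Lambda(p)$-theorem together with the elementary inequality $\|f\|_2\leq\|f\|_q$ — but establishing the failure of complete boundedness with quantitative control over constants requires the noncommutative Khintchine inequality and a genuine analysis of how far the closed span of a lacunary-type set sits inside $S^p[L^p]$ from an operator Hilbert space when $p\neq 2$. This is the substance of Harcharras's theory of $\Lambda(p)_{cb}$-sets, on which the cited proofs of Pisier and Harcharras rest.
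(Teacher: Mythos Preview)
Your reduction step contains a genuine gap. You claim that an infinite discrete abelian group $\widehat G$ either contains a copy of $\Z$ or, being torsion, contains a subgroup isomorphic to $\bigoplus_{n\geq 1}\Z/d_n\Z$. The second alternative is false: the Pr\"ufer group $\Z(p^\infty)$ is an infinite torsion abelian group in which every proper subgroup is finite cyclic, so it contains no infinite direct sum of nontrivial cyclic groups. Its Pontryagin dual is the compact group $\Z_p$ of $p$-adic integers, and your argument produces no quotient of $\Z_p$ on which to work. The paper (in the proof of Proposition~\ref{Th incond compact groups}, which is the template for the compact case according to the Remark following Theorem~\ref{Th noncb locally}) handles this correctly by splitting into \emph{three} cases for $\widehat G$: not torsion (contains $\Z$); torsion of unbounded order (contains cyclic subgroups of arbitrarily large finite order, giving quotients $G/G_j^\perp\cong\Z/n_j\Z$ with $n_j\to\infty$, and one builds multipliers on these finite groups with uniformly bounded norm and unbounded cb-norm, then concludes by the closed graph theorem); and bounded order (contains $\bigoplus_1^\infty\Z/q\Z$ by Theorem~\ref{Th struct group bounded}). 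Your dichotomy misses the middle case entirely.

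On the construction side, your route via Harcharras's $\Lambda(p)_{cb}$-theory is one of the approaches the paper itself cites (reference~\cite{Har}), but it is not the one the paper develops. The paper's own argument, sketched in the Remark after Theorem~\ref{Th noncb locally} and carried out in detail in Proposition~\ref{Prop mult non cb on oplus}, is more direct: one uses a Sidon set to transfer between characters and Rademachers (Theorem~\ref{Th transfer Sidon}), and then the failure of property~$(\alpha)$ for $S^p$ when $p\neq 2$ immediately yields, for each $n$, a unimodular multiplier on the span of double products of Sidon characters whose norm is bounded but whose cb-norm is arbitrarily large. This avoids the noncommutative Khintchine machinery entirely. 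Your proposed examples of $\Lambda(p)$-but-not-$\Lambda(p)_{cb}$ sets are also imprecise: $d$-fold sum sets of dissociate sequences are typically the sets Harcharras shows \emph{are} $\Lambda(p)_{cb}$ (via her $Z(k)$-condition), not the ones that fail; the sets that fail are produced by a separate construction.
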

The author  \cite[Theorems 3.4 and 3.5]{Arh1} has given variants of
this result by proving the next theorem:
\begin{thm}
\label{Th noncb on Z and R} Suppose $1<p<\infty$, $p\not=2$. If
$G=\R$ or $G=\Z$, there exists a bounded Fourier multiplier on
$L^p(G)$ which is not completely bounded.
\end{thm}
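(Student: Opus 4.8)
The plan is to derive both statements from Pisier's Theorem~\ref{Th non cb compact} applied to the circle group $G=\T$, transporting the resulting example to $\R$ and to $\Z$ by de~Leeuw-type transference carried out at the completely bounded level. Fix $1<p<\infty$ with $p\neq 2$. By Theorem~\ref{Th non cb compact} for $\T$ there is a bounded sequence $(a_k)_{k\in\Z}\in\ell^\infty$ defining a bounded Fourier multiplier $T$ on $L^p(\T)$ that fails to be completely bounded.

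\emph{The case $G=\R$.} First extend $(a_k)$ to a symbol on $\R$ by Jodeit's interpolation $m(\xi)=\sum_{k\in\Z}a_k\max(1-|\xi-k|,0)$: this $m$ is continuous and piecewise linear, satisfies $m(k)=a_k$ for all $k$, and $M_m$ is a bounded Fourier multiplier on $L^p(\R)$ — each triangular bump is the Fourier transform of an integrable (Fej\'er-type) kernel, and Jodeit's classical argument bounds $\norm{M_m}_{L^p(\R)\to L^p(\R)}$ by the $L^p(\T)$-multiplier norm of $(a_k)$. To see $M_m$ is not completely bounded, suppose it were. The subgroup $\Z\subset\R=\widehat\R$ is its own annihilator in $\R$, so $\R/\Z=\T$ with $\widehat\T=\Z$, and the completely bounded form of de~Leeuw's restriction theorem would then say that the restricted symbol $(m(k))_{k\in\Z}=(a_k)_{k\in\Z}$ defines a completely bounded Fourier multiplier on $L^p(\T)$, i.e.\ $T$ is completely bounded — contradicting the choice of $(a_k)$. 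Hence $M_m$ is bounded but not completely bounded on $L^p(\R)$.

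\emph{The case $G=\Z$.} Multipliers of $L^p(\Z)=\ell^p(\Z)$ have symbols on $\widehat\Z=\T$, and the relevant tool is the de~Leeuw-type correspondence — again valid completely boundedly up to absolute constants — between these and the periodic Fourier multipliers of $L^p(\R)$, obtained by periodising the symbol. I would apply it to the frequency truncations $m\,\chi_{[-N,N]}$ of the $\R$-example $m$: composing with an interval (Riesz-projection) multiplier is completely bounded with constant independent of $N$, so these are uniformly bounded on $L^p(\R)$, while their completely bounded norms tend to infinity by lower semicontinuity of the cb-norm under pointwise convergence of symbols. Periodising $m\,\chi_{[-N,N]}$ over a period $\gg N$ produces Fourier multipliers on $L^p(\Z)$ that are uniformly bounded; if they were uniformly completely bounded, the reverse (localised) direction of the correspondence would give a uniform cb-bound for the $m\,\chi_{[-N,N]}$ and hence, in the limit, for $m$, contradicting the previous case. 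Packaging these truncations into a single symbol on $\T$ — placing the $N$-th piece in its own frequency band so as not to destroy $L^p$-boundedness — yields one Fourier multiplier on $L^p(\Z)$ that is bounded but not completely bounded.

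\emph{Main obstacle.} The crux is the completely bounded upgrade of the transference statements: the cb de~Leeuw restriction theorem for the pair $\Z\subset\R$ and the cb periodisation correspondence between $\ell^p(\Z)$- and $L^p(\R)$-multipliers. Their scalar versions are classical; the cb versions reduce to checking that the operators implementing them — periodisations of symbols, averages of translations, and the interval multipliers used in the localisation — act as complete contractions (with controlled constants) on the Bochner spaces $L^p(G,S^p)$, after which the classical proofs transfer verbatim. The secondary difficulty, specific to $G=\Z$, is the assembly of countably many ``almost bad'' truncated multipliers into a single genuinely non-completely-bounded one without losing $L^p$-boundedness, which forces a careful choice of frequency bands and periods.
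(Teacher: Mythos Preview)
Your treatment of $G=\R$ coincides with the paper's: Jodeit's piecewise-linear extension from $\Z$ to $\R$ followed by the vectorial de~Leeuw restriction (Theorem~\ref{deleuw}) to rule out complete boundedness. This part is fine.

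For $G=\Z$ your route diverges from the paper's and carries a real gap. The paper (following \cite{Arh1}, and mirrored here in the proof of Proposition~\ref{Th incond R and Z}) never periodises the \emph{symbol}. Instead it works on the kernel side: approximate the $\R$-multiplier by a convolution $C_b$ with $b$ continuous of compact support (Theorem~\ref{Th Larsen}), discretise $b$ via the averages $a_{n,k}$ of~\eqref{ank}, and verify the intertwining $\mathbb{E}_n C_b J_n=J_n C_{a_n}$. Since $\mathbb{E}_n$ and $J_n$ are positive (hence completely contractive), both the bounded and the $S^p$-valued norms transfer automatically, and one finishes with the closed graph theorem.

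Your argument instead asserts that ``periodising $m\chi_{[-N,N]}$ over a period $\gg N$ produces Fourier multipliers on $L^p(\Z)$ that are uniformly bounded''. This is precisely the step that is \emph{not} covered by the de~Leeuw package you invoke. Theorem~\ref{Th transfer subgroups} (de~Leeuw periodisation) gives $\|\psi\|_{M_p(\Z)}=\|\psi\|_{M_p(\R)}$ for a \emph{periodic} symbol $\psi$, and the localisation $\tilde m_N=\psi_N\cdot\eta$ with a smooth bump $\eta$ gives the inequality you need for the cb lower bound; but the \emph{upper} bound $\|\psi_N\|_{M_p(\R)}\lesssim\|\tilde m_N\|_{M_p(\R)}$ --- i.e.\ that forming $\sum_k\tilde m_N(\cdot-k)$ from a compactly supported $\tilde m_N$ does not blow up the multiplier norm --- is a separate statement. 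It does not follow from de~Leeuw, and it is not ``implemented'' by any single averaging operator whose complete contractivity one could simply check. One can prove it (e.g.\ via a smooth equal-interval Littlewood--Paley decomposition together with the Marcinkiewicz--Zygmund inequality), but that is genuine additional work comparable in difficulty to the paper's discretisation, and you have not supplied it. The final ``packaging into a single symbol'' is unnecessary --- once you have uniformly bounded multipliers with unbounded cb norms, the closed graph theorem finishes the job, exactly as in the paper.
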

In this paper, we give an extension of these both theorems to
arbitrary infinite locally compact abelian groups. Our second
principal result is the following.
\begin{thm}
\label{Th non cb locally} Suppose $1<p<\infty$, $p\not=2$. Let $G$
be an infinite locally compact abelian group. There exists a bounded
Fourier multiplier on $L^p(G)$ which is not completely bounded.
\end{thm}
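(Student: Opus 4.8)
The strategy is to reduce, using the structure theory of locally compact abelian groups, to a short list of ``model'' groups already handled by Theorems~\ref{Th non cb compact} and~\ref{Th noncb on Z and R}, passing between a group and its closed subgroups by transference principles that preserve at once the Fourier multiplier norm and the completely bounded multiplier norm.

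The transference tool I would set up is the following. Let $H$ be a closed subgroup of $G$ which is either compact, or topologically isomorphic to $\R$, or (when $G$ is discrete) arbitrary. Let $H^{\perp}\subseteq\widehat G$ be the annihilator of $H$ and $q\colon\widehat G\to\widehat G/H^{\perp}\cong\widehat H$ the quotient map, and to a bounded function $m_{0}$ on $\widehat H$ associate the lifted symbol $m:=m_{0}\circ q$ on $\widehat G$. In each of these three situations there is a Borel cross-section of $G\to G/H$ under which the Haar measure of $G$ factors, yielding isometric identifications $L^{p}(G)\cong L^{p}(G/H;L^{p}(H))$ and, crucially, $L^{p}(G,S^{p})\cong L^{p}\bigl(G/H;L^{p}(H,S^{p})\bigr)$; under them the convolution operator $T_{m}$ becomes $Id_{L^{p}(G/H)}\otimes T_{m_{0}}$ and $T_{m}\otimes Id_{S^{p}}$ becomes $Id_{L^{p}(G/H)}\otimes(T_{m_{0}}\otimes Id_{S^{p}})$, so that $\norm{T_{m}}_{L^{p}(G)\to L^{p}(G)}=\norm{T_{m_{0}}}_{L^{p}(H)\to L^{p}(H)}$ and $\norm{T_{m}}_{cb}=\norm{T_{m_{0}}}_{cb}$. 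In particular a bounded, not completely bounded, Fourier multiplier on $L^{p}(H)$ produces one on $L^{p}(G)$.

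By the structure theorem, $G$ has an open subgroup of the form $\R^{n}\times K$ with $K$ compact. If $n\geq1$ then $\R$ embeds as a closed subgroup (indeed a topological direct summand) of $G$; if $n=0$ and $K$ is infinite then $K$ is an infinite compact subgroup of $G$; if $n=0$ and $K$ is finite then $G$ is discrete, and then either $G$ has an element of infinite order, whence $\Z\hookrightarrow G$, or $G$ is an infinite torsion group. In the first three cases we are done: apply Theorem~\ref{Th noncb on Z and R} to $\R$ or $\Z$, or Theorem~\ref{Th non cb compact} to the infinite compact subgroup, and transfer up to $G$.

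There remains the genuinely harder case of an infinite discrete torsion group $G$; such a $G$ is locally finite and has finite subgroups $F_{k}$ of arbitrarily large order. What I need is Fourier multipliers $m_{k}$ on $\ell^{p}(F_{k})$ with $\sup_{k}\norm{T_{m_{k}}}_{\ell^{p}(F_{k})}<\infty$ but $\norm{T_{m_{k}}}_{cb}\to\infty$. If $G$ has elements of arbitrarily large order one may take the $F_{k}$ cyclic and obtain the $m_{k}$ by de Leeuw ``sampling'' of the bounded non completely bounded multiplier on $L^{p}(\Z)$ from Theorem~\ref{Th noncb on Z and R} to finite subgroups of $\T=\widehat\Z$: sampling lowers the multiplier norm and commutes with $\otimes\,Id_{S^{p}}$, so if all the samples had uniformly bounded completely bounded norm one could let the size of the sampling grid tend to infinity and conclude that the original multiplier on $L^{p}(\Z)$ is completely bounded, a contradiction. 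If instead $G$ has bounded exponent, one may take $F_{k}$ of the form $(\Z/p\Z)^{r_{k}}$ with $r_{k}\to\infty$, and the needed uniform estimates are then the finite-dimensional core behind Theorem~\ref{Th non cb compact}, available through the work of Pisier and Harcharras on $\Lambda(p)_{cb}$-sets. Transferring the $m_{k}$ to $\ell^{p}(G)$ yields Fourier multipliers on $\ell^{p}(G)$ of uniformly bounded norm but unbounded completely bounded norm; since the completely bounded multiplier algebra $cbM_{p}(G)$ is complete for its own norm and injects contractively into the multiplier algebra $M_{p}(G)$, the open mapping theorem forbids $cbM_{p}(G)=M_{p}(G)$, and any symbol in $M_{p}(G)\setminus cbM_{p}(G)$ is the sought bounded, not completely bounded, Fourier multiplier on $L^{p}(G)$. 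The main obstacle is exactly this discrete torsion case: the structure-theoretic reduction and the transference principles are essentially formal, whereas establishing that the failure of complete boundedness is already present, uniformly, on finite subgroups is where the substance lies.
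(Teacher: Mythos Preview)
Your reduction via the structure theorem and transference from closed subgroups is exactly the paper's route: the paper also splits into $n\geq 1$ (use $\R$), $K$ infinite compact (use Theorem~\ref{Th non cb compact}), and $G$ discrete, and for the discrete case further into non-torsion (use $\Z$), torsion of unbounded order, and bounded order. Your identification of the discrete torsion case as the crux is correct.

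Where your argument diverges from the paper is precisely in that crux, and there the proposal has a genuine gap. In the unbounded-order case you restrict the symbol of a bounded, non-cb multiplier on $\ell^p(\Z)$ to the $n$-th roots of unity and invoke ``de Leeuw sampling''. Restriction to a quotient does lower the multiplier norm (and the $S^p$-vector-valued norm), so your $m_k$ are uniformly bounded. But the converse step---``if all samples had uniformly bounded cb norm one could let the grid tend to infinity and conclude the original is cb''---is not a standard transference and you give no mechanism for it: the sampled operators live on different groups, and there is no obvious Jodeit-type extension from $\Z/n\Z\subset\T$ back to $\T$ that would let you pass to the limit. The paper avoids this entirely by working on the dual side: starting from a bounded, non-cb convolution $C_b$ on $L^p(\T)$ (obtained from Theorem~\ref{Th non cb compact} and approximated via Theorem~\ref{Th Larsen}), it discretizes through the explicit factorization $\mathbb{E}_n C_b J_n = J_n C_{a_n}$ on $L^p([0,1])$, with $\mathbb{E}_n$ the dyadic-type conditional expectation and $J_n\colon\ell^p_n\to\mathbb{E}_n(L^p([0,1]))$ an isometry. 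This formula makes the upper bound $\norm{C_{a_n}}\leq\norm{C_b}$ immediate, and since $\mathbb{E}_n\to Id$ strongly the lower bound $\norm{C_{a_n}\otimes Id_{S^p}}\to\norm{C_b\otimes Id_{S^p}}$ follows without any ``inverse sampling'' principle.

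For the bounded-exponent case your appeal to ``the finite-dimensional core behind Theorem~\ref{Th non cb compact}, available through the work of Pisier and Harcharras'' is not a proof. The paper gives a self-contained construction (Proposition~\ref{Prop mult non cb on oplus}): using that the coordinate characters $(\varepsilon_{i,q})$ of $\Pi_1^\infty\Z/q\Z$ form a Sidon set, one replaces Rademacher averages by $\varepsilon_{i,q}$-averages, and the failure of property~$(\alpha)$ for $S^p$ then yields, for each $n$, a unimodular Schur-type multiplier on the span of $\varepsilon_{i,q}\otimes\varepsilon_{j,q}$ in $L^p(\Omega_q^n\times\Omega_q^n)$ that is uniformly bounded but has arbitrarily large cb norm; composing with the bounded Sidon projection gives genuine Fourier multipliers on the finite groups $\Omega_q^n\times\Omega_q^n$, which then transfer to $\oplus_1^\infty\Z/q\Z\subset G$.
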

The proof of this theorem and the one of Theorem \ref{Th non cb
compact} use a form of conditionality (i.e. non unconditionality).

If $1<p<\infty$ and if $E$ is an operator space, let $S^p(E)$ denote
the vector-valued noncommutative $L^p$-space defined in \cite{Pis2}.
The readers are referred to \cite{Pis2} and \cite{Pis3} for details
on operator spaces and completely bounded maps. For any index set
$I$, we denote by $OH(I)$ the associated operator Hilbert space
introduced by G. Pisier, see \cite{Pis3} and \cite{Pis4} for more
information. For any integers $i,j\geq 1$, let $e_{ij}$ be the
element of $S^p$ corresponding to the matrix with coefficients equal
to one at the $(i,j)$ entry and zero elsewhere. In the last section,
we show some results linked with unconditionality in the spirit of
Theorem \ref{Th incond T}. The following result is proved.
\begin{thm}
\label{Th incond Schurintro} Let $E$ be an operator space. The
following assertions are equivalent.
\begin{itemize}
  \item There exists a positive constant $C$ such that
\begin{equation*}
\Bgnorm{\sum_{i,j=1}^n t_{ij}e_{ij}\ot x_{ij}}_{S^2(E)}\leq C
\sup_{1\leq i,j\leq n} |t_{ij}| \Bgnorm{\sum_{i,j=1}^n e_{ij}\ot
x_{ij}}_{S^2(E)}
\end{equation*}
for any $n\in \mathbb{N}$, any complex numbers $\ t_{ij}\in \C$ and
any $\ x_{ij}\in E$.
  \item The operator space $E$ is completely isomorphic to an operator Hilbert space $OH(I)$ for some index set $I$.
\end{itemize}
\end{thm}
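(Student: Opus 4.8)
The plan is to prove the two implications separately, with the easy direction being (second) $\Rightarrow$ (first). Indeed, for the operator Hilbert space $OH(I)$ one has the well-known identity $S^2(OH(I)) = S^2 \otimes_2 OH(I)$ isometrically (Hilbert space tensor product), so that $\bignorm{\sum_{i,j} t_{ij} e_{ij} \ot x_{ij}}_{S^2(OH(I))}^2 = \sum_{i,j} |t_{ij}|^2 \norm{x_{ij}}^2 \leq (\sup_{i,j}|t_{ij}|^2) \sum_{i,j}\norm{x_{ij}}^2 = (\sup_{i,j}|t_{ij}|^2)\bignorm{\sum_{i,j} e_{ij}\ot x_{ij}}_{S^2(OH(I))}^2$. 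If $E$ is merely completely isomorphic to some $OH(I)$ via a complete isomorphism $u\colon E \to OH(I)$, then $Id_{S^2}\ot u$ and its inverse are bounded on the vector-valued level (this is where complete boundedness is used, although for $S^2$ one only needs the maps to be $2$-summing-type bounded; in fact boundedness of $Id_{S^2}\ot u$ follows already from complete boundedness of $u$), and the inequality transfers with constant $C = \norm{u}_{cb}\norm{u^{-1}}_{cb}$.

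For the hard direction (first) $\Rightarrow$ (second), the idea is to mimic the Fourier-multiplier argument but on the Schur side, exploiting that the hypothesis says every bounded Schur multiplier on $S^2$ remains bounded after tensoring with $Id_E$. First I would observe that, since $S^2 = S^2(\ell^2)$ is a Hilbert space with orthonormal basis $(e_{ij})$, \emph{every} choice of signs (or more generally every bounded family $(t_{ij})$ with $\sup|t_{ij}|<\infty$) defines a bounded Schur multiplier, so the hypothesis is genuinely about all of them simultaneously. The strategy is to extract from this a two-sided vector-valued inequality forcing the norm on $S^2(E)$ restricted to ``diagonal-type'' or ``finitely supported'' tensors to be Hilbertian. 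Concretely, by an averaging argument over random signs $\varepsilon_{ij} = \pm 1$ (applying the hypothesis to $t_{ij} = \varepsilon_{ij}$ and integrating), one gets
\begin{equation*}
\Bgnorm{\sum_{i,j=1}^n e_{ij}\ot x_{ij}}_{S^2(E)} \leq C\, \Bgnorm{\Bigl(\int \Bgnorm{\sum_{i,j=1}^n \varepsilon_{ij} e_{ij}\ot x_{ij}}_{S^2(E)}^{2} d\varepsilon\Bigr)^{1/2}},
\end{equation*}
and conversely the reverse inequality (with a possibly different constant) comes from applying the hypothesis again, so the $S^2(E)$-norm of $\sum e_{ij}\ot x_{ij}$ is equivalent to a Rademacher-type average. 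The point is then that this ``unconditionality of the $(e_{ij})$ in $S^2(E)$'' should be translated into the statement that $E$, with its operator space structure, has the property that the formal identity on $\ell^2(\Z^2_+)\otimes E$-type expressions is completely bounded into $OH$, i.e.\ that $E$ is completely isomorphic to $OH(I)$.

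The cleanest route for the last step is to invoke Pisier's characterization of operator Hilbert spaces: an operator space $E$ is completely isomorphic to some $OH(I)$ if and only if $E$ and its operator space dual $E^*$ both embed completely boundedly in a suitable way, or — more to the point here — via the characterization that $E \simeq OH(I)$ completely iff the canonical maps relating $E$, $R\otimes_h E \otimes_h C$, and the vector-valued $S^2(E)$ are completely bounded with the ``Hilbertian'' norm. I would use the identity (valid for operator Hilbert spaces) $S^2(OH(I)) = OH$ of an appropriate index set, and the fact that $S^2(E)$ always sits between the row and column versions; the unconditionality hypothesis collapses these, which by Pisier's theory characterizes $OH$. \textbf{The main obstacle} I anticipate is precisely this translation: passing from the scalar-coefficient unconditionality inequality for the basis $(e_{ij})$ of $S^2(E)$ to a genuinely \emph{completely} isomorphic identification $E \simeq OH(I)$, rather than just a Banach-space (or even operator-space but non-Hilbertian) statement — one must feed the hypothesis not only to $E$ but, by a duality or a $2\times 2$-amplification trick, to $M_n(E)$ uniformly in $n$, in order to upgrade ``isomorphic'' to ``completely isomorphic.'' This is where the operator space structure, as opposed to the plain Banach space structure handled in Theorem \ref{Th incond T}, really enters, and it is the technical heart of the argument.
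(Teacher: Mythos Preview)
Your easy direction is fine and matches the paper's ``remaining implication is trivial.'' The hard direction, however, has a genuine gap precisely where you flag the obstacle. The Rademacher averaging you describe only yields that $\bnorm{\sum e_{ij}\ot x_{ij}}_{S^2(E)}$ is equivalent to the Rademacher average of $\bnorm{\sum \varepsilon_{ij} e_{ij}\ot x_{ij}}_{S^2(E)}$; this is a statement about unconditionality of the system $(e_{ij})$ inside the Banach space $S^2(E)$, not the equivalence $\bnorm{\sum e_{ij}\ot x_{ij}}_{S^2(E)}\approx\bnorm{\sum \varepsilon_{ij}\ot x_{ij}}_{\Rad(E)}$ that one would need in order to invoke Lee's characterization of $OH$. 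Your proposed bridge --- vague appeals to ``Pisier's characterization,'' row/column collapsing, or a $2\times 2$-amplification feeding the hypothesis to $M_n(E)$ --- does not actually close this gap: the hypothesis is a scalar-coefficient inequality in $S^2(E)$, and amplifying it produces the same kind of inequality in $S^2(M_n(E))$, which is no more informative.

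The paper's route is quite different and avoids this difficulty by reducing to the Fourier-multiplier theorem already proved. First, using $S^2_{\Z}\big(S^2(E)\big)=S^2(E)$, the unconditionality hypothesis says every Toeplitz Schur multiplier $M_A$ with $A=[\varphi_{i-j}]_{i,j\in\Z}$ is bounded on $S^2_{\Z}\big(S^2(E)\big)$. Then a transfer result of Neuwirth--Ricard (Theorem~\ref{Transfer Neuwirth inf}) converts this into boundedness of $M_{\varphi}\ot Id_{S^2(E)}$ on $L^2(\T,S^2(E))$ for every $\varphi\in\ell^\infty(\Z)$. By Theorem~\ref{Th incond T} this forces the Banach space $S^2(E)$ to be isomorphic to a Hilbert space. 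Finally, Proposition~\ref{Th equivalence OH S2E} (which is where Lee's theorem enters) shows that $S^2(E)$ Hilbertian implies $E$ is completely isomorphic to some $OH(I)$. So the missing idea in your plan is the Schur-to-Fourier transfer, which lets one recycle the $L^2(\T)$ result rather than attempting a direct operator-space argument.
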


The paper is organized as follows. Section 2 gives preliminaries on
probability theory, Fourier multipliers and groups. We state some
results which are relevant to our paper. The next Section 3 contains
the proof of Theorem \ref{Th incond locallyintro}. In Section 4, we
give a proof of Theorem \ref{Th non cb locally}. Section 5 is
devoted to unconditionality from the point of view of Schur
multipliers. We present a proof of Theorem \ref{Th incond
Schurintro}.

Later in the paper, we will use $\lesssim$ to indicate an inequality
up to a constant which does not depend on the particular elements to
which it applies. Moreover $A(x)\approx B(x)$ will mean that we both
have $A(x)\lesssim B(x)$ and $B(x)\lesssim A(x)$.
%%%%%%%%%%%%%%%%%%%%%%%%%%%%%%%%%%%%%%%%%%%%%%%%%%%%%%%%%%%%%%%%%%%%%%%%%%%%%%%%%%%%%%%%%%%%%%%%%%%%%%%%%%%%%%%%%%%%%%%%%%%%%%%%%%%%%%%%%%%%%%%%%%%%%%
%%%%%%%%%%%%%%%%%%%%%%%%%%%%%%%%%%%%%%%%%%%%%%%%%%%%%%%%%%%%%%%%%%%%%%%%%%%%%%%%%%%%%%%%%%%%%%%%%%%%%%%%%%%%%%%%%%%%%%%%%%%%%%%%%%%%%%%%%%%%%%%%%%%%%%
%%%%%%%%%%%%%%%%%%%%%%%%%%%%%%%%%%%%%%%%%%%%%%%%%%%%%%%%%%%%%%%%%%%%%%%%%%%%%%%%%%%%%%%%%%%%%%%%%%%%%%%%%%%%%%%%%%%%%%%%%%%%%%%%%%%%%%%%%%%%%%%%%%%%%%
\section{Preliminaries}
%%%%%%%%%%%%%%%%%%%%%%%%%%%%%%%%%%%%%%%%%%%%%%%%%%%%%%%%%%%%%%%%%%%%%%%%%%%%%%%%%%%%%%%%%%%%%%%%%%%%%%%%%%%%%%%%%%%%%%%%%%%%%%%%%%%%%%%%%%%%%%%%%%%%%%
%%%%%%%%%%%%%%%%%%%%%%%%%%%%%%%%%%%%%%%%%%%%%%%%%%%%%%%%%%%%%%%%%%%%%%%%%%%%%%%%%%%%%%%%%%%%%%%%%%%%%%%%%%%%%%%%%%%%%%%%%%%%%%%%%%%%%%%%%%%%%%%%%%%%%%
%%%%%%%%%%%%%%%%%%%%%%%%%%%%%%%%%%%%%%%%%%%%%%%%%%%%%%%%%%%%%%%%%%%%%%%%%%%%%%%%%%%%%%%%%%%%%%%%%%%%%%%%%%%%%%%%%%%%%%%%%%%%%%%%%%%%%%%%%%%%%%%%%%%%%%
Let us recall some basic notations. If $A$ is a subset of a set $E$,
we let $1_A$ be the characteristic function of $A$. Let $\T=\big\{z
\in \C\ |\ |z|=1 \big\}$ and let $\Omega_0=\{-1,1\}^\infty$ be the
Cantor group equipped with their normalized  Haar measure. For any
integer $i\geq 1$, we define $\varepsilon_i$ by
$\varepsilon_i(\omega)=\omega_i$ if $\omega=(\omega_k)_{k\geq 1}\in
\Omega_0$. We can see the $\varepsilon_i$'s as independent
Rademacher variables on the probability space $\Omega_0$. Let $X$ be
a Banach space. Suppose $1<p<\infty$. We let $\Rad_p(X)\subset
L^p(\Omega_0,X)$ be the closure of ${\rm
Span}\bigl\{\varepsilon_i\otimes x\ |\ i\geq 1,\ x\in X\bigr\}$ in
the Bochner space $L^p(\Omega_0,X)$. Thus, for any finite family
$x_1,\ldots,x_n$ in $X$, we have
\begin{equation*}
\Bgnorm{\sum_{i=1}^{n} \varepsilon_i\otimes x_i}_{\Rad_p(X)} \,=\,
\Bigg(\int_{\Omega_0}\bgnorm{\sum_{i=1}^{n} \varepsilon_i(\omega)\,
x_i}_{X}^{p}\,d\omega\,\Bigg)^{\frac{1}{p}}.
\end{equation*}
We let $\Rad(X)=\Rad_2(X)$. By Kahane's inequalities (see e.g.
\cite[Theorem 11.1]{DJT}), the Banach spaces $\Rad(X)$ and
$\Rad_p(X)$ are canonically isomorphic.

We say that a set $F\subset B(X)$ is $R$-bounded provided that there
exists a constant $C\geq 0$ such that for any finite families
$T_1,\ldots, T_n$ in $F$ and $x_1,\ldots,x_n$ in $X$, we have
\begin{equation*}
\Bgnorm{\sum_{i=1}^{n} \varepsilon_i \ot T_i
(x_i)}_{\Rad(X)}\,\leq\, C\, \Bgnorm{\sum_{i=1}^{n}
\varepsilon_i\otimes x_i}_{\Rad(X)}.
\end{equation*}
$R$-boundedness was introduced in \cite{BG} and then developed in
the fundamental paper \cite{ClP}. We refer to the latter paper and
to \cite[Section 2]{KW} for a detailed presentation.

Recall that a Banach space $X$ has property $(\alpha)$ if there
exists a positive constant $C$ such that for any integer $n$, any
complex numbers $t_{ij}\in \C$ and any $x_{ij}$ in $X$ we have
\begin{equation*}
\Bgnorm{\sum_{i,j=1}^n t_{ij} \varepsilon_i\ot \varepsilon_{j}\ot
x_{ij}}_{\Rad(\Rad(X))} \leq C\ \sup_{1\leq i,j\leq n} |t_{ij}|\
\Bgnorm{\sum_{i,j=1}^n \varepsilon_i\ot \varepsilon_{j} \ot x_{ij}
}_{\Rad(\Rad(X))}.
\end{equation*}
If $1<p<\infty$, $p\not=2$, it is well-known that the space $S^p$
does not have property $(\alpha)$. If the Banach space $X$ has
property $(\alpha)$ and if $\Omega$ is a $\sigma$-finite measure
space then, for any $1<p<\infty$, the space $L^p(\Omega,X)$ also has
property $(\alpha)$. See \cite{Pis1}, \cite[page 148]{ClP} and
\cite[page 127]{KW} for more information on this property.

Let $Y$ be a Banach space and let $u\colon Y \to B(X)$ be a bounded
map. We say that $u$ is $R$-bounded if the set $\big\{ u(y) \ | \
\norm{y}_{Y} \leq 1 \big\}$ is $R$-bounded. We recall a fact which
is highly relevant for our paper. This result is \cite[Corollary
2.19]{DPR} (see also \cite[Corollary 4.5]{KLM}).
\begin{thm}
\label{bounded homo is R bounded} Let $K$ be a compact topological
space and $X$ be a Banach space with property $(\alpha)$. Any
bounded homomorphism $u\colon C(K)\to B(X)$ is $R$-bounded.
\end{thm}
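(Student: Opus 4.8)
The plan is to reduce the statement to a single clean inequality about a finite family of orthogonal idempotents, and then to feed property $(\alpha)$ into that inequality. By definition of $R$-boundedness we must bound, for $f_1,\dots,f_n$ in the unit ball of $C(K)$ and $x_1,\dots,x_n\in X$, the quantity $\Bgnorm{\sum_k \varepsilon_k\ot u(f_k)x_k}_{\Rad(X)}$ by a fixed multiple of $\Bgnorm{\sum_k \varepsilon_k\ot x_k}_{\Rad(X)}$. We may assume $u$ is unital; otherwise we compress to the range of the idempotent $u(1)$, which is a complemented subspace of $X$ and therefore again has property $(\alpha)$.

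The heart of the argument is the following estimate. Suppose $P_1,\dots,P_m\in B(X)$ are idempotents with $P_iP_j=0$ for $i\neq j$ and $\sum_j P_j=\Id$, and suppose $\bnorm{\sum_j \theta_j P_j}\leq L$ whenever $|\theta_j|\leq 1$. Then, writing $C$ for the constant of property $(\alpha)$ of $X$, for all scalars $|t_{kj}|\leq 1$ one has
\[ \Bgnorm{\sum_k \varepsilon_k\ot \sum_j t_{kj}P_jx_k}_{\Rad(X)}\ \leq\ L^2 C\, \Bgnorm{\sum_k \varepsilon_k\ot x_k}_{\Rad(X)}. \]
This rests on two elementary ``bridges'' between single and double Rademacher averages, both obtained by inserting signs and integrating. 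First, since $\sum_j\theta_jP_jx_k=M_\theta x_k$ with $M_\theta=\sum_j\theta_jP_j$ and $\norm{M_\theta}\leq L$, acting by $M_\theta$ on $X$ gives $\Bgnorm{\sum_k\varepsilon_k\ot\sum_j\theta_jP_jx_k}_{\Rad(X)}\leq L\Bgnorm{\sum_k\varepsilon_k\ot x_k}_{\Rad(X)}$ for every sign vector $\theta$; taking $\theta=(\varepsilon_j)$ and averaging yields
\[ \Bgnorm{\sum_{k,j}\varepsilon_k\ot\varepsilon_j\ot P_jx_k}_{\Rad(\Rad(X))}\leq L\,\Bgnorm{\sum_k\varepsilon_k\ot x_k}_{\Rad(X)}. \]
Second, the orthogonality relations $P_iP_j=\delta_{ij}P_j$ give $\sum_j t_{kj}P_jx_k=M_\theta\bigl(\sum_j\theta_j t_{kj}P_jx_k\bigr)$ for every $\theta$, whence in the same way
\[ \Bgnorm{\sum_k\varepsilon_k\ot\sum_j t_{kj}P_jx_k}_{\Rad(X)}\leq L\,\Bgnorm{\sum_{k,j}t_{kj}\varepsilon_k\ot\varepsilon_j\ot P_jx_k}_{\Rad(\Rad(X))}. \]
Property $(\alpha)$, applied to the double family $\{P_jx_k\}$, bounds the last right-hand side by $LC\sup_{k,j}|t_{kj}|\,\Bgnorm{\sum_{k,j}\varepsilon_k\ot\varepsilon_j\ot P_jx_k}_{\Rad(\Rad(X))}$; combining this with the first bridge and $\sum_jP_jx_k=x_k$ gives the displayed estimate. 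Note that property $(\alpha)$ is used precisely to absorb the arbitrary coefficients $t_{kj}$, while the two bridges only use the uniform bound $L$.

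It remains to produce such orthogonal idempotents, and this is the step I expect to be the main obstacle, since $C(K)$ has no nontrivial idempotents when $K$ is connected. The idea is to approximate the finitely many $f_k$ by simple functions. Passing to the bidual, $u$ extends to a weak$^*$-continuous homomorphism $u^{**}\colon C(K)^{**}\to B(X)^{**}$, and $C(K)^{**}$ is a commutative von Neumann algebra in which the commuting elements $f_k$ are simultaneously norm-limits of finite combinations $\sum_j t_{kj}q_j$ over a common family of orthogonal projections $q_j$, with $|t_{kj}|\leq 1$ and $\sum_j q_j=1$. Composing with the canonical homomorphism $B(X)^{**}\to B(X^{**})$ turns the $q_j$ into genuine orthogonal idempotents $P_j$ on $X^{**}$ with $\bnorm{\sum_j\theta_jP_j}\leq\norm{u}$, and $X^{**}$ inherits property $(\alpha)$ from $X$ by the principle of local reflexivity. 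The core estimate then makes $\{u(f)^{**}:\norm{f}\leq1\}$ $R$-bounded in $B(X^{**})$ with constant $\norm{u}^2C$, and restricting to vectors of $X$, using that $\Rad(X)\hookrightarrow\Rad(X^{**})$ isometrically and that $u(f)^{**}$ agrees with $u(f)$ on $X$, yields the $R$-boundedness of $u$ on $X$. The technical points I would need to verify are the multiplicativity of $u^{**}$ for the Arens product, the existence and multiplicativity of the canonical map $B(X)^{**}\to B(X^{**})$, and the transfer of property $(\alpha)$ and of $R$-boundedness across the bidual.
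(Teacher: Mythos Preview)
The paper does not prove this theorem; it is quoted from the literature (\cite[Corollary~2.19]{DPR}, see also \cite[Corollary~4.5]{KLM}), so there is no in-paper argument to compare against. Your outline is nonetheless a correct proof and is close in spirit to those references.

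The core estimate for a finite family of orthogonal idempotents is exactly right, and the two ``bridges'' together with property~$(\alpha)$ give precisely the bound $L^{2}C$. Of the three technical points you flag at the end, all can be settled affirmatively: (i)~the bidual of a Banach-algebra homomorphism is a homomorphism for the first Arens products, so $u^{**}\colon C(K)^{**}\to B(X)^{**}$ is multiplicative; (ii)~the first Arens extension of the module action $B(X)\times X\to X$ produces a contractive map $\rho\colon B(X)^{**}\to B(X^{**})$ with $\rho(\Phi\,\square\,\Psi)=\rho(\Phi)\rho(\Psi)$ (this is the standard associativity of Arens-extended module actions), so $w=\rho\circ u^{**}\colon C(K)^{**}\to B(X^{**})$ is a bounded unital homomorphism with $\norm{w}\leq\norm{u}$; (iii)~property~$(\alpha)$ is a local condition involving only finitely many vectors at a time and therefore passes to $X^{**}$ with the same constant by the principle of local reflexivity. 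With these points in hand, one approximates the $f_k$ in norm by simple functions $\sum_j t_{kj}q_j$ over a common finite family of orthogonal projections in the commutative von Neumann algebra $C(K)^{**}$ (e.g.\ from a clopen partition of its hyperstonean spectrum on which every $f_k$ is nearly constant), applies the core estimate in $B(X^{**})$, and reads the conclusion back in $X$ via the isometric inclusion $\Rad(X)\hookrightarrow\Rad(X^{**})$. The resulting $R$-bound is $\norm{u}^{2}$ times the $(\alpha)$-constant of $X$.
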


Now, we record the following elementary lemma for later use. Its
easy proof is left to the reader.
\begin{lemma}
\label{Lemma formule RAdRAd} Suppose $1<p<\infty$. Let $E$ be an
operator space. We have an equality
\begin{equation*}
\label{formule RAdRAd} \Bgnorm{\sum_{i,j=1}^{n}e_{ij}\ot
x_{ij}}_{S^p(E)}=\Bgnorm{\sum_{i,j=1}^{n}\varepsilon_i\ot\varepsilon_j\ot
e_{ij} \ot x_{ij} }_{\Rad(\Rad(S^p(E))}, \qquad n\in \mathbb{N},
x_{ij}\in E.
\end{equation*}
\end{lemma}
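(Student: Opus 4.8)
The plan is to evaluate the right-hand side pointwise on $\Omega_0\times\Omega_0$, after rewriting the Rademacher sum as a two-sided product by diagonal unitary matrices, and then to integrate.

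First, recall that, via the canonical identification of $L^2\bigl(\Omega_0,L^2(\Omega_0,S^p(E))\bigr)$ with $L^2\bigl(\Omega_0\times\Omega_0,S^p(E)\bigr)$, the space $\Rad(\Rad(S^p(E)))$ is the closed span of the functions $(\omega,\omega')\mapsto \varepsilon_i(\omega)\varepsilon_j(\omega')\,y$ with $y\in S^p(E)$. Hence the element $\sum_{i,j=1}^n\varepsilon_i\ot\varepsilon_j\ot e_{ij}\ot x_{ij}$ is represented by the $S^p(E)$-valued function
\[
f(\omega,\omega')=\sum_{i,j=1}^{n}\varepsilon_i(\omega)\varepsilon_j(\omega')\,e_{ij}\ot x_{ij},
\]
and, since $\Omega_0$ carries a probability measure, $\bignorm{\sum_{i,j}\varepsilon_i\ot\varepsilon_j\ot e_{ij}\ot x_{ij}}_{\Rad(\Rad(S^p(E)))}^2=\int_{\Omega_0}\int_{\Omega_0}\norm{f(\omega,\omega')}_{S^p(E)}^{2}\,d\omega\,d\omega'$.

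Next, fix $\omega,\omega'\in\Omega_0$ and set $D_\omega=\sum_{k=1}^{n}\varepsilon_k(\omega)e_{kk}$ and $D_{\omega'}=\sum_{k=1}^{n}\varepsilon_k(\omega')e_{kk}$, which are diagonal unitary $n\times n$ matrices. A direct computation with matrix units gives
\[
f(\omega,\omega')=(D_\omega\ot \Id_E)\Bigl(\sum_{i,j=1}^{n}e_{ij}\ot x_{ij}\Bigr)(D_{\omega'}\ot \Id_E).
\]
Because $D_\omega$ and $D_{\omega'}$ are unitaries, left multiplication by $D_\omega$ and right multiplication by $D_{\omega'}$ are complete isometries of $S^p$, so that $y\mapsto (D_\omega\ot\Id_E)\,y\,(D_{\omega'}\ot\Id_E)$ is an isometry of $S^p(E)$; this is part of the elementary theory of the vector-valued Schatten spaces $S^p(E)=S^p[E]$ (see \cite{Pis2}). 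Consequently $\norm{f(\omega,\omega')}_{S^p(E)}=\bignorm{\sum_{i,j=1}^{n}e_{ij}\ot x_{ij}}_{S^p(E)}$ for every $\omega,\omega'$.

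Finally, inserting this constant value into the double integral yields $\bignorm{\sum_{i,j}\varepsilon_i\ot\varepsilon_j\ot e_{ij}\ot x_{ij}}_{\Rad(\Rad(S^p(E)))}=\bignorm{\sum_{i,j}e_{ij}\ot x_{ij}}_{S^p(E)}$, which is the asserted equality. There is no real obstacle here; the only input beyond Fubini's theorem and the definition of $\Rad(\,\cdot\,)$ is the invariance of the $S^p(E)$-norm under two-sided multiplication by scalar unitary matrices, which is immediate from the construction of $S^p(E)$.
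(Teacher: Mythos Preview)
Your proof is correct. The paper does not actually supply a proof of this lemma---it states that ``its easy proof is left to the reader''---and your argument via two-sided multiplication by diagonal sign unitaries is precisely the natural one the author presumably had in mind.
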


Let $G$ be a locally compact abelian group with dual group
$\widehat{G}$. If $H$ is a subgroup of $G$, we denote by $H^\perp$
the annihilator of $H$. The group $(H^\perp)^\perp$ is equal to the
closure $\overline{H}$ of $H$ in $G$. If $H$ is a closed subgroup of
$G$ and if $\pi:G \to G/H$ denotes the canonical map, the mapping
$\chi\mapsto \chi\circ \pi$ is an isomorphism of $\widehat{G/H}$
onto $H^\perp$. Note that if $G$ is a locally compact abelian group
and if $H$ is a closed subgroup of $G$, we have an isomorphism
$\widehat{G}/ H^{\perp}=\widehat{H}$ given by
$\overline{\chi}\mapsto \chi|H $ (see \cite[Theorem 24.11]{HR}). See
\cite{FO} and \cite{HR} for background on abstract harmonic
analysis.

Let $G$ be a compact abelian group. A sequence $(\gamma_i)_{i\geq
1}$ of $\widehat{G}$ is a Sidon set if there exists a positive
constant $C$ such that
\begin{equation*}
\sum_{i=1}^n |\alpha_i| \leq C \Bgnorm{\sum_{i=1}^n
\alpha_i\gamma_i}_{L^\infty(G)},\qquad n\in \mathbb{N},
\alpha_1,\ldots,\alpha_n\in \C.
\end{equation*}
A typical example for $G=\T$ is a Hadamard set, see e.g. $\big\{ 2^i
\colon i\geq 1\big\}$. See \cite{HR} and \cite{LoR} for more
information on Sidon sets. Recall the following theorem
\cite[Theorem 2.1]{Pis5}.
\begin{thm}
\label{Th transfer Sidon} Let $G$ be a compact abelian group and
$(\gamma_i)_{i\geq 1}$ a Sidon set in $\widehat{G}$. Let $X$ be a
Banach space. Suppose $1<p<\infty$. Then we have the equivalence
\begin{equation*}
\Bgnorm{\sum_{i=1}^{n}\varepsilon_{i} \ot x_i}_{\Rad(X)} \approx
\Bgnorm{\sum_{i=1}^{n}\gamma_{i} \ot x_i}_{L^p(G,X)}, \qquad n\in
\mathbb{N}, x_1,\ldots,x_n\in X.
\end{equation*}
\end{thm}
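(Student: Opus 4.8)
The plan is to deduce both halves of the equivalence from the classical dual description of Sidon sets, after first recording a randomization estimate that uses no lacunarity whatsoever. Throughout I work with a fixed finite family $x_1,\dots,x_n$, and I freely replace $\Rad(X)=\Rad_2(X)$ by $\Rad_p(X)$, which is legitimate by Kahane's inequalities. \emph{Step 1 (a free randomization estimate).} I first claim that, with universal constants independent of $G$ and of the characters,
\begin{equation*}
\Big(\int_{\Omega_0}\Bgnorm{\sum_{i=1}^n \varepsilon_i(\omega)\,\gamma_i\ot x_i}_{L^p(G,X)}^p\,d\omega\Big)^{1/p}\approx \Bgnorm{\sum_{i=1}^n \varepsilon_i\ot x_i}_{\Rad_p(X)}.
\end{equation*}
Indeed, expanding the $L^p(G,X)$ norm and applying Fubini, for each fixed $t\in G$ the scalars $\gamma_i(t)$ are unimodular, so the complex contraction principle (split into real and imaginary parts and apply the real contraction principle twice) shows that $\int_{\Omega_0}\bnorm{\sum_i \varepsilon_i(\omega)\gamma_i(t)x_i}_X^p\,d\omega$ is comparable to $\int_{\Omega_0}\bnorm{\sum_i\varepsilon_i(\omega)x_i}_X^p\,d\omega$, uniformly in $t$. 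Integrating in $t$ against the normalized Haar measure and using $\int_G dt=1$ yields the claim.

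\emph{Step 2 (both inequalities from one measure trick).} The Sidon property of $(\gamma_i)$ is equivalent, by duality, to the following interpolation statement: there is a constant $C$ such that for every finite sign pattern there exists $\mu\in M(G)$ with $\norm{\mu}_{M(G)}\le C$ whose Fourier–Stieltjes transform realizes that pattern on the $\gamma_i$. Fixing $\omega\in\Omega_0$ and using that only $i\le n$ are relevant, I choose $\mu_\omega\in M(G)$ with $\norm{\mu_\omega}\le C$ and $\widehat{\mu_\omega}(\gamma_i)=\varepsilon_i(\omega)$. Since convolution acts on the spectrum by multiplication by $\widehat{\mu_\omega}$, and since $\varepsilon_i(\omega)^2=1$, I obtain the two key identities
\begin{equation*}
\mu_\omega*\Big(\sum_i \gamma_i\ot x_i\Big)=\sum_i \varepsilon_i(\omega)\gamma_i\ot x_i,\qquad \mu_\omega*\Big(\sum_i \varepsilon_i(\omega)\gamma_i\ot x_i\Big)=\sum_i \gamma_i\ot x_i.
\end{equation*}
By Young's inequality $\norm{\mu_\omega*\,\cdot\,}_{L^p(G,X)}\le C\norm{\cdot}_{L^p(G,X)}$, the first identity gives $\bnorm{\sum_i\varepsilon_i(\omega)\gamma_i\ot x_i}_{L^p(G,X)}\le C\bnorm{\sum_i\gamma_i\ot x_i}_{L^p(G,X)}$ and the second gives the reverse bound $\bnorm{\sum_i\gamma_i\ot x_i}_{L^p(G,X)}\le C\bnorm{\sum_i\varepsilon_i(\omega)\gamma_i\ot x_i}_{L^p(G,X)}$, both valid for every $\omega$.

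\emph{Step 3 (conclusion).} Raising each of these pointwise inequalities to the $p$-th power and integrating over $\omega\in\Omega_0$, the varying integrand becomes $\int_{\Omega_0}\bnorm{\sum_i\varepsilon_i(\omega)\gamma_i\ot x_i}_{L^p(G,X)}^p\,d\omega$, which by Step 1 is comparable to $\bnorm{\sum_i\varepsilon_i\ot x_i}_{\Rad_p(X)}^p$. This produces simultaneously $\bnorm{\sum_i\gamma_i\ot x_i}_{L^p(G,X)}\lesssim\bnorm{\sum_i\varepsilon_i\ot x_i}_{\Rad_p(X)}$ and the reverse, hence the stated $\approx$; a final application of Kahane's inequalities replaces $\Rad_p(X)$ by $\Rad(X)$.

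The decisive point — and the only place the hypothesis is used — is the uniform bound $\norm{\mu_\omega}\le C$, independent of $n$ and of $\omega$, which is precisely the content of the duality form of the Sidon inequality and which I would quote carefully; the elegant feature is that the single relation $\varepsilon_i(\omega)^2=1$ makes $\mu_\omega$ invertible in the relevant sense, so that the \emph{same} family of measures delivers both inequalities. The only remaining technical care concerns the universal constant in the complex contraction principle of Step 1 and the measurability of $\omega\mapsto\mu_\omega$, which here is trivial since for fixed $n$ only finitely many sign patterns occur.
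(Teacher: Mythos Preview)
Your argument is correct. The paper does not give its own proof of this statement: it is quoted as a known result with the reference \cite[Theorem 2.1]{Pis5}, so there is nothing to compare against in the paper itself. Your proof is essentially the standard one (and in spirit the one in Pisier's S\'eminaire note): Step~1 is the contraction principle applied pointwise in $t\in G$ using $|\gamma_i(t)|=1$, Step~2 is the Rider--Drury duality characterization of Sidon sets (every bounded sequence on the set is the restriction of a Fourier--Stieltjes transform, with a uniform $M(G)$ bound), and Step~3 is routine integration. The observation that the same measure $\mu_\omega$ serves for both directions because $\varepsilon_i(\omega)^2=1$ is exactly the right way to organize the two inequalities. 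One cosmetic remark: the measurability of $\omega\mapsto\mu_\omega$ is genuinely irrelevant, since you only integrate scalar quantities that are already bounded pointwise in $\omega$; you note this yourself, so nothing is missing.
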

Let $(\gamma_i)_{i\geq 1}$ be a Sidon set in $\widehat{G}$ where $G$
is a compact abelian group. Let $P$ be the orthogonal projection
from $L^2(G)$ onto the closed span of $\{ \gamma_i\ |\ i\geq 1 \}$
in the Hilbert space $L^2\big(G\big)$. Suppose $1<p<\infty$. It is
well-known that the restriction of $P$ to $ L^2(G)\cap L^p(G)$
extends to a bounded projection from $L^p(G)$ on the closure of
${\rm Span}\{\gamma_i\ |\ i\geq 1 \}$ in the space $L^p(G)$.

In the sequel, for any integer $q$, we consider the abelian group
$\oplus_{1}^\infty \mathbb{Z}/q\mathbb{Z}$ equipped with the
discrete topology. By \cite[Theorem 23.22 and page 367]{HR}, the
dual group of $\oplus_{1}^\infty \mathbb{Z}/q\mathbb{Z}$ is
isomorphic to the compact group
$\Pi_{1}^{\infty}\mathbb{Z}/q\mathbb{Z}$.

For any integer $i\geq 1$, we define the character
$\varepsilon_{i,q}$ of the group
$\Pi_{1}^{\infty}\mathbb{Z}/q\mathbb{Z}$ by
$\varepsilon_{i,q}\big(\ovl{k_1},\ldots,\ovl{k_j},\ldots\big)=e^{\frac{2\pi
\sqrt{-1} k_i}{q}}$ where $(k_j)_{j\geq 1}$ is a sequence of
integers of $\Z$. The compact group
$\Pi_{1}^{\infty}\mathbb{Z}/q\mathbb{Z}$ is an example of Vilenkin
group and the set of all characters of this group is called the
associated Vilenkin system. For more information, we refer the
reader to \cite[Appendix 0.7]{SWS} and the references contained
therein.

We will use the following lemma left to the reader.
\begin{lemma}
\label{Lemma epsq is a Sidon set} Let $q\geq 2$ be an integer. The
sequence $\big(\varepsilon_{i,q}\big)_{i\geq 1}$ of characters of
the group $\Pi_{1}^{\infty}\mathbb{Z}/q\mathbb{Z}$ is a Sidon set.
\end{lemma}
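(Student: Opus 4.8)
The plan is to prove Lemma \ref{Lemma epsq is a Sidon set}: the characters $\varepsilon_{i,q}$ form a Sidon set in the dual of $\Pi_1^\infty \Z/q\Z$. The essential point is that these characters are ``independent'' in the arithmetic sense: the map $(\ovl{k_1},\ovl{k_2},\dots)\mapsto (e^{2\pi\sqrt{-1}k_1/q}, e^{2\pi\sqrt{-1}k_2/q},\dots)$ exhibits $\Pi_1^\infty\Z/q\Z$ as (isomorphic to) the compact product group $\mu_q^\nn$, where $\mu_q$ is the group of $q$-th roots of unity, and under this identification $\varepsilon_{i,q}$ is exactly the $i$-th coordinate projection. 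So I am reduced to the statement that the coordinate characters on the countable product $\mu_q^\nn$ of a fixed finite abelian group form a Sidon set, which is the natural generalization of the fact that the Rademacher functions $(\varepsilon_i)$ on $\{-1,1\}^\nn$ are a Sidon set (the case $q=2$).

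First I would reduce to a finite estimate: fixing $n$ and scalars $\alpha_1,\dots,\alpha_n\in\C$, I must bound $\sum_{i=1}^n|\alpha_i|$ by a constant (independent of $n$ and the $\alpha_i$) times $\bnorm{\sum_{i=1}^n \alpha_i \varepsilon_{i,q}}_{L^\infty}$. Because the $\varepsilon_{i,q}$ depend only on finitely many coordinates, the sup over the infinite product equals the sup over the finite product $\Pi_{k=1}^n \mu_q$, i.e. over all choices $(\zeta_1,\dots,\zeta_n)$ of $q$-th roots of unity, of $\bnorm{\sum_{i=1}^n\alpha_i\zeta_i}$. So the claim becomes: there is a constant $C=C_q$ with $\sum_{i=1}^n|\alpha_i|\leq C\sup_{\zeta_i\in\mu_q}\bnorm{\sum_{i=1}^n\alpha_i\zeta_i}$.

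For the proof of this inequality I would use the standard averaging/Riesz-product argument. Write $\alpha_i=|\alpha_i|\omega_i$ with $|\omega_i|=1$; for each $i$ pick $\zeta_i\in\mu_q$ closest to $\ovl{\omega_i}$, so that $\mathrm{Re}(\alpha_i\zeta_i)=|\alpha_i|\mathrm{Re}(\omega_i\zeta_i)\geq |\alpha_i|\cos(\pi/q)$, since any point of $\T$ is within angle $\pi/q$ of some $q$-th root of unity. Then
\begin{equation*}
\cos(\pi/q)\sum_{i=1}^n|\alpha_i|\leq \mathrm{Re}\Bgnorm{\sum_{i=1}^n\alpha_i\zeta_i}\leq \Bgnorm{\sum_{i=1}^n\alpha_i\zeta_i}\leq \sup_{\eta_i\in\mu_q}\Bgnorm{\sum_{i=1}^n\alpha_i\eta_i},
\end{equation*}
which gives the Sidon constant $C_q=1/\cos(\pi/q)=\sec(\pi/q)$. (For $q=2$, where $\mu_2=\{-1,1\}$ and one works with real scalars, one takes $\zeta_i=\mathrm{sgn}(\alpha_i)$ and the constant is $1$; the general complex case with $q=2$ needs the factor $\sqrt2$ or a separate real/imaginary-part argument, but for $q\geq 2$ the $\sec(\pi/q)$ bound covers everything uniformly, and for $q=2$ one may alternatively just invoke the classical Sidonicity of the Rademacher system.) I do not expect a genuine obstacle here; the only mild point of care is the bookkeeping of the identification of $\Pi_1^\infty\Z/q\Z$ with $\mu_q^\nn$ and the observation that $\sup$ over the infinite compact group reduces to a finite sup — both are routine — and the elementary geometric fact that $\T$ is covered by arcs of angle $2\pi/q$ centered at the $q$-th roots of unity.
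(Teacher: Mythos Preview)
Your argument is correct and is the standard one; the paper itself gives no proof (the lemma is ``left to the reader''), so there is nothing to compare against. One small slip: you write that ``for $q\geq 2$ the $\sec(\pi/q)$ bound covers everything uniformly'', but $\sec(\pi/2)=\infty$, so the geometric argument only works for $q\geq 3$; you clearly already see this, since you immediately supply the correct fix for $q=2$ (split into real and imaginary parts, or quote the classical Rademacher case). Also, in your displayed chain you wrote $\mathrm{Re}\Bnorm{\sum_i\alpha_i\zeta_i}$ where you meant $\mathrm{Re}\big(\sum_i\alpha_i\zeta_i\big)$.
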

The sequence $\big(\varepsilon_{i,q}\big)_{i\geq 1}$ can be regarded
as a sequence of independent complex random variables on the
probability space $\Pi_{1}^{\infty}\mathbb{Z}/q\mathbb{Z}$. For any
integer $n$ and $q$, we introduce the compact finite group
$\Omega_q^n=\mathbb{Z}/q\mathbb{Z}\times\cdots
\times\mathbb{Z}/q\mathbb{Z}$. Note that $\Omega_{q}^n$ is a
subgroup of $\Pi_{1}^{\infty}\mathbb{Z}/q\mathbb{Z}$. The
restrictions $\varepsilon_{i,q}|\Omega_{q}^n$ to $\Omega_{q}^n$ of
the $\varepsilon_{i,q}$'s, where $1\leq i\leq n$, are characters of
the group $\Omega_q^n$ (see \cite[Theorem 23.21]{HR}) which can also
be regarded as a finite sequence of independent complex random
variables on the probability space $\Omega_q^n$.

We only require the use of averages of these random variables.
Moreover, if $X$ is a Banach space and $1<p<\infty$, these averages
are identical:
\begin{equation*}
\Bgnorm{\sum_{i=1}^n \varepsilon_{i,q}|\Omega_{q}^n \ot
x_{i}}_{L^p(\Omega_{q}^n,X)}= \Bgnorm{\sum_{i=1}^n \varepsilon_{i,q}
\ot x_{i}}_{L^p(\Pi_{1}^{\infty}\mathbb{Z}/q\mathbb{Z},X)},\qquad
n\in \mathbb{N}, x_{1},\ldots, x_{n}\in X.
\end{equation*}
Thus, if $n$ and $q$ are integers and $1\leq i\leq n$, we will use
also the notation $\varepsilon_{i,q}$ for the restriction
$\varepsilon_{i,q}|\Omega_{q}^n$. %If $1<p<\infty$, we let
%$\mathcal{R}^{p}_{2,q}$ the closed span of the $\varepsilon_{i,q}
%\ot \varepsilon_{j,q}$'s in $L^p\big(\Omega_{q}^n \times
%\Omega_{q}^n\big)$ where $1\leq i,j\leq n$.

Suppose $1<p<\infty$. An operator $T\colon L^p(G) \xra{} L^p(G)$ is
a Fourier multiplier if there exists a function $\varphi \in
L^\infty\big(\widehat{G}\big)$ such that for any $f \in L^p(G)\cap
L^2(G)$ we have $\mathcal{F}\big(T(f)\big)= \varphi\mathcal{F}(f)$
where $\mathcal{F}$ denotes the Fourier transform. In this case, we
let $T=M_{\varphi}$. We denote by $M_p(G)$ the space of bounded
Fourier multipliers on $L^p(G)$. See \cite{Lar} and \cite{Der} for
more information. Let $X$ be a Banach space. The space $M_p(G,X)$ is
the space of bounded Fourier multipliers $M_\varphi$ such that
$M_\varphi\ot Id_X$ extends to a bounded operator $M_\varphi\ot
Id_{X}:L^p(G,X)\xra{}L^p(G,X)$. With these definitions and by
(\ref{defnormecb}), the space $M_p(G,S^p)$ coincides with the space
of completely bounded Fourier multipliers.

If $b\in L^1(G)$, we define the convolution operator $C_b$ by
\begin{equation*}
\begin{array}{cccc}
  C_b:  &  L^p(G)   &  \longrightarrow   & L^p(G)   \\
        &   f  &  \longmapsto       &  b*f.  \\
\end{array}
\end{equation*}
This operator is a completely bounded Fourier multiplier and we have
$C_b=M_{\mathcal{F}(b)}$. We will use the following approximation
result \cite[Theorem 5.6.1]{Lar} (see also \cite[Corollary 4 page
98]{Der}).
\begin{thm}
\label{Th Larsen} Suppose $1< p<\infty$. Let $G$ be a locally
compact abelian group. Let $M_{\varphi}\colon L^p(G) \xra{} L^p(G)$
be a bounded Fourier multiplier. Then there exists a net of
continuous functions $(b_i)_{i\in I }$ with compact support such
that
\begin{equation*}
\bnorm{C_{b_{i}}}_{L^p(G) \xra{}L^p(G)}\leq
\bnorm{M_{\varphi}}_{L^p(G) \xra{}L^p(G)} \ \ \ \text{and} \ \ \
C_{b_{i}} \xra[i]{so} M_{\varphi}
\end{equation*}
(convergence for the strong operator topology).
\end{thm}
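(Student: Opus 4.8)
The plan is to realize $M_{\varphi}$ as a strong-operator limit of convolution operators produced in two regularization steps, the whole difficulty being to keep the operator norm bounded by $\norm{M_\varphi}_{L^p(G)\xra{}L^p(G)}$ at each stage.

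First I would fix an approximate identity $(u_i)_{i\in I_1}\subset C_c(G)$ with $u_i\geq 0$ and $\int_G u_i=1$. Convolution by $u_i$ is the Fourier multiplier $C_{u_i}=M_{\mathcal{F}(u_i)}$, it satisfies $\norm{C_{u_i}}_{L^p(G)\xra{}L^p(G)}\leq \norm{u_i}_{L^1(G)}=1$, and $C_{u_i}\xra[i]{so}\Id$ on $L^p(G)$. Since Fourier multipliers commute, for $f$ in a suitable dense subspace one has $C_{u_i}M_{\varphi}C_{u_i}(f)=u_i*\big(M_{\varphi}(u_i)*f\big)=\big(u_i*M_{\varphi}(u_i)\big)*f$, so setting $h_i:=u_i*M_{\varphi}(u_i)$ gives $C_{u_i}M_{\varphi}C_{u_i}=C_{h_i}$. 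As $u_i\in C_c(G)\subset L^{p'}(G)$ and $M_{\varphi}(u_i)\in L^p(G)$, the function $h_i\in C_0(G)\cap L^p(G)$ is continuous and bounded, with $\mathcal{F}(h_i)=\mathcal{F}(u_i)^2\varphi$. Because a product of uniformly bounded SOT-convergent nets converges in the SOT, $C_{h_i}=C_{u_i}M_{\varphi}C_{u_i}\xra[i]{so}M_{\varphi}$, while $\norm{C_{h_i}}\leq\norm{C_{u_i}}\norm{M_{\varphi}}\norm{C_{u_i}}\leq\norm{M_{\varphi}}$. This yields continuous kernels with the correct norm bound, but $h_i$ need not have compact support.

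Second, I would truncate $h_i$ to a compact set without increasing the norm, using the following averaging principle. For $\lambda\in\widehat{G}$, the translated symbol $\xi\mapsto\varphi(\xi-\lambda)$ induces the operator obtained by conjugating $M_{\varphi}$ by modulation by the character $\lambda$, which is an isometry of $L^p(G)$; hence $\norm{M_{\varphi(\cdot-\lambda)}}_{L^p(G)\xra{}L^p(G)}=\norm{M_{\varphi}}_{L^p(G)\xra{}L^p(G)}$, and by the integral Minkowski inequality $\norm{M_{\mu*\varphi}}_{L^p(G)\xra{}L^p(G)}\leq\norm{M_{\varphi}}_{L^p(G)\xra{}L^p(G)}$ for every probability measure $\mu$ on $\widehat{G}$. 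Now pick $g_j\in C_c(G)$ with $\norm{g_j}_{L^2(G)}=1$ and set $k_j:=g_j*\widetilde{g_j}$, where $\widetilde{g_j}(x)=\ovl{g_j(-x)}$; then $k_j\in C_c(G)$, $\mathcal{F}(k_j)=|\mathcal{F}(g_j)|^2\geq 0$, and $\int_{\widehat{G}}\mathcal{F}(k_j)=k_j(0)=\norm{g_j}_{L^2(G)}^2=1$, so $\mathcal{F}(k_j)$ is a probability density on $\widehat{G}$, with $|k_j|\leq 1$. Putting $b_{ij}:=k_j h_i$, this function is continuous with compact support, its symbol is $\mathcal{F}(k_j)*\big(\mathcal{F}(u_i)^2\varphi\big)$, and the averaging principle gives $\norm{C_{b_{ij}}}\leq\norm{C_{h_i}}\leq\norm{M_{\varphi}}$.

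Finally I would arrange, by letting the $g_j$ spread out over $G$ (for instance $g_j=|O_j|^{-1/2}1_{O_j}$ along an exhaustion $O_j\uparrow G$), that $k_j\to 1$ pointwise with $|k_j|\leq 1$; dominated convergence then yields $\norm{k_j h_i-h_i}_{L^p(G)}\to 0$. For $f\in C_c(G)\subset L^1(G)$, Young's inequality gives $\norm{(b_{ij}-h_i)*f}_{L^p(G)}\leq\norm{k_j h_i-h_i}_{L^p(G)}\norm{f}_{L^1(G)}\to 0$, and since the $C_{b_{ij}}$ are uniformly bounded this upgrades, by density of $C_c(G)$ in $L^p(G)$, to $C_{b_{ij}}\xra[j]{so}C_{h_i}$. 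Combined with the first step, the iterated strong limits together with the uniform bound $\norm{C_{b_{ij}}}\leq\norm{M_{\varphi}}$ can be assembled, through the directed set of pairs (finite $F\subset L^p(G)$, $\varepsilon>0$), into a single net $(b_\gamma)$ of continuous compactly supported functions with $\norm{C_{b_\gamma}}_{L^p(G)\xra{}L^p(G)}\leq\norm{M_{\varphi}}_{L^p(G)\xra{}L^p(G)}$ and $C_{b_\gamma}\xra{so}M_{\varphi}$, as required. The main obstacle is precisely the compact-support requirement coupled with the norm bound $\norm{C_{b_i}}\leq\norm{M_{\varphi}}$: truncating a kernel normally destroys norm control, and the averaging principle above is exactly what restores it.
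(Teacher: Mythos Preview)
The paper does not prove this statement: it is quoted as \cite[Theorem 5.6.1]{Lar} (see also \cite[Corollary 4 page 98]{Der}) and used as a black box. So there is no in-paper proof to compare against; what can be said is whether your argument stands on its own and how it relates to the classical proof in those references.

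Your two-step scheme (regularize by an approximate identity to get continuous $L^p$ kernels $h_i$ with the right norm bound, then multiply by compactly supported positive-definite functions $k_j=g_j*\widetilde{g_j}$ to truncate without enlarging the multiplier norm) is exactly the classical route taken in Larsen's and Derighetti's books. The identity $C_{u_i}M_\varphi C_{u_i}=C_{h_i}$ with $h_i=u_i*M_\varphi(u_i)$, the averaging principle $\bnorm{M_{\mu*\psi}}\leq\bnorm{M_\psi}$ for a probability $\mu$ on $\widehat{G}$, and the reading of $\mathcal{F}(k_j)=|\mathcal{F}(g_j)|^2$ as such a probability are all correct and are the standard ingredients.

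One point deserves tightening. For the truncation step you need $k_j(x)=\langle\tau_x g_j,g_j\rangle_{L^2(G)}\to 1$ for every $x\in G$. Taking $g_j=|O_j|^{-1/2}1_{O_j}$ gives $k_j(x)=\mu_G\big(O_j\cap(O_j+x)\big)/\mu_G(O_j)$, and an arbitrary ``exhaustion $O_j\uparrow G$'' does not guarantee this ratio tends to $1$; what you need is a F\o lner net, which exists because every locally compact abelian group is amenable. With that adjustment (and the routine observation that $h_i\in L^p(G)$ so dominated convergence applies), the SOT convergence $C_{k_jh_i}\to C_{h_i}$ goes through, and the final diagonal/net assembly is standard. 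In short: your proof is the textbook one, modulo replacing ``exhaustion'' by ``F\o lner net''.
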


We need the following vectorial extension of \cite[Theorem 2 page
113]{Der} (see also \cite[Theorem 3.3]{Sae}). We can prove this
result with a similar proof.
\begin{thm}
\label{Th transfer subgroups} Let $G$ be a locally compact abelian
group, $H$ be a closed subgroup of $G$ and $X$ be a Banach space. We
denote by $\pi: \widehat{G}\to \widehat{G}/ H^{\perp} $ the
canonical map. Then the linear map
\begin{equation*}
\begin{array}{cccc}
       &   M_p(H,X)   &  \longrightarrow   &  M_p(G,X)  \\
       &  M_\varphi   &  \longmapsto       &  M_{\varphi \circ \pi}  \\
\end{array}
\end{equation*}
is an isometry.
\end{thm}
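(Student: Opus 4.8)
The plan is to mimic the classical de Leeuw / Derighetti transference argument for Fourier multipliers between a subgroup and the ambient group, keeping track of the Banach-space valued version throughout. First I would recall the duality $\widehat{G}/H^\perp = \widehat{H}$ via $\overline{\chi}\mapsto \chi|H$, so that a symbol $\varphi\in L^\infty(\widehat{H})$ may indeed be pulled back to $\varphi\circ\pi\in L^\infty(\widehat{G})$, where $\pi:\widehat{G}\to\widehat{G}/H^\perp$ is the quotient map. The content of the statement is twofold: (i) if $M_\varphi$ is bounded on $L^p(H,X)$ then $M_{\varphi\circ\pi}$ is bounded on $L^p(G,X)$ with $\norm{M_{\varphi\circ\pi}}\le\norm{M_\varphi}$; and (ii) the reverse inequality $\norm{M_\varphi}\le\norm{M_{\varphi\circ\pi}}$, giving the isometry. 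Part (i) is the substance; part (ii) is the easy direction (restriction).

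For the hard direction (i), I would argue as follows. By Theorem \ref{Th Larsen} applied on $H$, it suffices to handle convolution operators: write $M_\varphi$ as a strong-operator limit of $C_{b_i}$ with $b_i\in C_c(H)$ and $\norm{C_{b_i}}_{L^p(H)\to L^p(H)}\le\norm{M_\varphi}$, and note that tensoring with $Id_X$ preserves strong-operator convergence on the dense subspace of compactly supported continuous $X$-valued functions; so one reduces to proving the inequality for each $C_{b_i}\otimes Id_X$, i.e. for a symbol $\varphi=\widehat{b}$ with $b\in C_c(H)$. Here the key structural input is that convolution by $b$ on $L^p(H,X)$ can be transported to convolution on $L^p(G,X)$ by a function supported near $H$: using a Bruhat-type section and a continuous "bump" $u$ on $G/H$ of integral one concentrated near the identity coset, one builds $b_u\in C_c(G)$ whose Fourier transform approximates $\widehat{b}\circ\pi$ on compacta, and one checks that $\norm{C_{b_u}\otimes Id_X}_{L^p(G,X)}\le\norm{C_b\otimes Id_X}_{L^p(H,X)}(1+o(1))$ via the Weil integration formula $\int_G f=\int_{G/H}\int_H f(xh)\,dh\,d\dot x$. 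Letting the bump shrink and combining with a weak-$*$ compactness argument on the multiplier symbols (the unit ball of $M_p(G,X)$ being closed for an appropriate topology) yields $M_{\widehat{b}\circ\pi}\in M_p(G,X)$ with the norm bound; this is exactly the vector-valued transcription of \cite[Theorem 2 page 113]{Der}, and since $Id_X$ only ever appears tensored through these scalar constructions, no property of $X$ beyond being a Banach space is used.

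For the easy direction (ii), given $M_{\varphi\circ\pi}$ bounded on $L^p(G,X)$, one restricts to functions "supported on $H$" in the appropriate averaged sense: approximate $H$ by the Weil formula, test $M_{\varphi\circ\pi}\otimes Id_X$ against functions of product form $g\otimes v$ with $g$ concentrated transversally to $H$ and $v\in L^p(H,X)$, and pass to the limit using $\int_{G/H}$-averaging to recover $\norm{M_\varphi\otimes Id_X}_{L^p(H,X)}\le\norm{M_{\varphi\circ\pi}\otimes Id_X}_{L^p(G,X)}$; this is the standard restriction half of transference and carries over verbatim with values in $X$. Combining (i) and (ii) gives the asserted isometry, and linearity in $\varphi$ is clear from $\varphi\mapsto\varphi\circ\pi$ being linear.

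The main obstacle is the first direction, and within it the careful bookkeeping of the Weil integration formula together with the choice of the transversal bump functions so that the operator-norm bound survives the limit — in the vector-valued setting one must be slightly careful that all the limiting arguments (strong-operator convergence in Theorem \ref{Th Larsen}, weak-$*$ compactness of multiplier balls) are run on the Bochner space $L^p(G,X)$ rather than on $L^p(G)$, but since the tensor factor $Id_X$ is inert under every step this is routine; hence the remark in the excerpt that "we can prove this result with a similar proof."
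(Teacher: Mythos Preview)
Your proposal is correct and follows precisely the route the paper indicates: the paper does not give a proof but simply states that the vectorial extension is proved ``with a similar proof'' to the scalar result \cite[Theorem 2 page 113]{Der} (and \cite[Theorem 3.3]{Sae}), and your outline---reduction to convolution operators via Theorem~\ref{Th Larsen}, transference through Weil's formula with transversal bump functions, and the easy restriction direction, all with the inert tensor factor $Id_X$---is exactly that scalar argument carried through verbatim in the Bochner-space setting.
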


The following proposition is well-known, see e.g. \cite[page
57]{FO}.
\begin{prop}{(Weil's formula)}
\label{Prop Weil formula}
Let $G$ a locally compact abelian group
and $H$ a closed subgroup of $G$. For any Haar measures $\mu_G$ and
$\mu_H$ on $G$ and $H$, respectively, there exists a Haar measure
$\mu_{G/H}$ on the group $G/H$ such that for every continuous
function $f\colon G\to\C$ with compact support we have
\begin{equation*}
\int_{G}
f(x)d\mu_G(x)=\int_{G/H}\int_{H}f(xh)d\mu_H(h)d\mu_{G/H}(xH).
\end{equation*}
\end{prop}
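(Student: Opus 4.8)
The plan is to realise the right-hand side as a positive, translation-invariant linear functional on $C_c(G/H)$ and then invoke the Riesz representation theorem to produce $\mu_{G/H}$. Write $\pi\colon G\to G/H$ for the canonical quotient map, which is continuous, open and surjective, and for $f\in C_c(G)$ define the averaged function $Pf\colon G/H\to\C$ by
\[
(Pf)(xH)=\int_H f(xh)\,d\mu_H(h).
\]
First I would check that $P$ is a well-defined linear map $C_c(G)\to C_c(G/H)$. Independence of the coset representative is immediate from the translation invariance of $\mu_H$: replacing $x$ by $xh_0$ with $h_0\in H$ and substituting $h\mapsto h_0^{-1}h$ leaves the integral unchanged. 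If $f$ is supported in a compact set $K\subset G$ then $Pf$ is supported in the compact set $\pi(K)$, and continuity of $Pf$ follows from the uniform continuity of the compactly supported function $f$ together with the openness of $\pi$.

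The first technical step is the surjectivity of $P\colon C_c(G)\to C_c(G/H)$. Given $g\in C_c(G/H)$ with compact support $C$, I would use the standard fact that compact subsets of $G/H$ lift to compact subsets of $G$, i.e. there is a compact $K\subset G$ with $\pi(K)\supseteq C$; choosing $\psi_0\in C_c(G)$ with $\psi_0\geq 0$ and $\psi_0>0$ on $K$ then forces $P\psi_0>0$ on $C$. The function
\[
f(x)=\begin{cases}\psi_0(x)\,\dfrac{g(\pi(x))}{(P\psi_0)(\pi(x))}, & (P\psi_0)(\pi(x))>0,\\[2mm] 0,& \text{otherwise,}\end{cases}
\]
then lies in $C_c(G)$ and satisfies $Pf=g$, because $\pi(xh)=xH$ for $h\in H$ allows the factor $g(\pi(\cdot))/(P\psi_0)(\pi(\cdot))$ to pull out of the $H$-integral. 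Note that if $g\geq 0$ one may take $f\geq 0$.

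Now define a functional on $C_c(G/H)$ by $I(g)=\int_G f\,d\mu_G$, where $f\in C_c(G)$ is any function with $Pf=g$. The crux of the proof — and the step I expect to be the main obstacle — is that $I$ is well defined, i.e. that $Pf=0$ implies $\int_G f\,d\mu_G=0$. To prove this I would choose, using surjectivity, some $\psi\in C_c(G)$ with $P\psi\equiv 1$ on $\pi(\operatorname{supp} f)$, so that $f=f\cdot(P\psi\circ\pi)$ pointwise; then by Fubini's theorem and the invariance of $\mu_G$ under $x\mapsto xh^{-1}$,
\[
\int_G f\,d\mu_G=\int_H\!\int_G f(x)\psi(xh)\,d\mu_G(x)\,d\mu_H(h)=\int_G \psi(x)\Big(\int_H f(xh^{-1})\,d\mu_H(h)\Big)d\mu_G(x).
\]
Since $H$ is abelian, hence unimodular, the inner integral equals $(Pf)(xH)=0$, whence $\int_G f\,d\mu_G=0$.

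Finally I would verify that $I$ is positive (choose $f\geq 0$ when $g\geq 0$) and translation invariant: if $Pf=g$, then $x\mapsto f(y^{-1}x)$ lies in $C_c(G)$ and its image under $P$ is $xH\mapsto g(y^{-1}xH)$, so by invariance of $\mu_G$ the functional $I$ takes the same value on $g$ and on its translate. Thus $I$ is a nonzero positive translation-invariant linear functional on $C_c(G/H)$, and the Riesz representation theorem represents it by a positive Radon measure $\mu_{G/H}$, which by translation invariance is a Haar measure on $G/H$. Applying $I(g)=\int_{G/H}g\,d\mu_{G/H}$ to $g=Pf$ for arbitrary $f\in C_c(G)$ yields exactly
\[
\int_G f\,d\mu_G=\int_{G/H}(Pf)\,d\mu_{G/H}=\int_{G/H}\int_H f(xh)\,d\mu_H(h)\,d\mu_{G/H}(xH),
\]
which is the desired formula.
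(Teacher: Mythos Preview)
Your proof is correct and follows the standard textbook argument for Weil's formula: define the averaging operator $P\colon C_c(G)\to C_c(G/H)$, establish its surjectivity via a lifting of compact sets, push the Haar integral on $G$ down to a positive translation-invariant functional on $C_c(G/H)$, and identify the latter as integration against a Haar measure on $G/H$ by Riesz. The well-definedness step via the auxiliary $\psi$ with $P\psi\equiv 1$ and Fubini is exactly the classical trick.

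Note however that the paper does not prove this proposition at all: it is stated as a well-known result with a reference to Folland's \emph{A Course in Abstract Harmonic Analysis}, page 57, and is used only as a tool for Proposition~\ref{Prop transfer quotient}. Your argument is essentially the proof one finds in Folland, so in that sense you have reconstructed the cited reference rather than diverged from the paper's approach.
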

With this result, we can prove the next proposition. %Note that if
%$G$ is a locally compact abelian group and if $H$ is a closed
%subgroup of $G$, we have an isomorphism $\widehat{G/H}=H^{\perp}$
%(see \cite[Theorem 24.11]{HR}).
\begin{prop}
\label{Prop transfer quotient} Suppose $1<p<\infty$. Let $G$ be a
locally compact abelian group, $H$ be a compact subgroup of $G$ and
$X$ be a Banach space. If $\varphi\colon H^{\perp} \to \C$ is a
complex function, we denote by
$\widetilde{\varphi}\colon\widehat{G}\to \C$ the extension of
$\varphi$ on $\widehat{G}$ which is zero off $H^{\perp}$. Then the
linear map
\begin{equation*}
\begin{array}{cccc}
    &   M_p(G/H,X)  &  \longrightarrow   &   M_p(G,X)   \\
    &   M_{\varphi}  &  \longmapsto       &   M_{\widetilde{\varphi}} \\
\end{array}
\end{equation*}
is an isometry.
\end{prop}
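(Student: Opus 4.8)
The plan is to realise $L^p(G/H,X)$ isometrically as the range of a norm-one projection inside $L^p(G,X)$, and to exhibit $M_{\widetilde\varphi}$ as $M_\varphi$ transported through this realisation, using also that the symbol $\widetilde\varphi$ is supported in $H^\perp$, so that $M_{\widetilde\varphi}$ is determined by its action on that range. Since $H$ is compact, normalise $\mu_H$ so that $\mu_H(H)=1$; then $H^\perp$ is open in $\widehat G$, and Weil's formula (Proposition~\ref{Prop Weil formula}) provides a Haar measure $\mu_{G/H}$ on $G/H$ with $\int_G f\,d\mu_G=\int_{G/H}\int_H f(xh)\,d\mu_H(h)\,d\mu_{G/H}(xH)$ for continuous compactly supported $f$ on $G$. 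Writing $\pi\colon G\to G/H$ for the quotient map, define $J\colon L^p(G/H,X)\to L^p(G,X)$ on elementary tensors by $J(f\ot x)=(f\circ\pi)\ot x$, extended by linearity and continuity. Applying the scalar Weil formula to $x\mapsto\norm{g(\pi(x))}_X^p$ gives $\norm{Jg}_{L^p(G,X)}=\norm{g}_{L^p(G/H,X)}$, so $J$ is an isometry (likewise with $p$ replaced by $2$); in particular $J$ is injective. Define $P\colon L^p(G,X)\to L^p(G,X)$ by $(Pg)(x)=\int_H g(xh)\,d\mu_H(h)$; Jensen's inequality makes $P$ a contraction for every Banach space $X$, a direct check gives $PJ=J$, so $P$ is a norm-one projection onto the range of $J$ and $J^{-1}P$ is a contraction from $L^p(G,X)$ onto $L^p(G/H,X)$. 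When $X=\C$, on $L^2(G)$ the operator $P$ is convolution by $\mu_H$, hence the Fourier multiplier of symbol $1_{H^\perp}$, by orthogonality of the characters of the compact group $H$.

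The key point is the behaviour of $J$ under Fourier transform. For $f$ continuous with compact support on $G/H$ and $\chi\in\widehat G$, Weil's formula together with $\overline{\chi(xh)}=\overline{\chi(x)}\,\overline{\chi(h)}$ shows that $\mathcal F(Jf)(\chi)$ carries the scalar factor $\int_H\overline{\chi(h)}\,d\mu_H(h)$; this factor is $0$ when $\chi\notin H^\perp$, forcing $\mathcal F(Jf)(\chi)=0$, and is $1$ when $\chi\in H^\perp$, in which case $\mathcal F(Jf)(\chi)=\int_{G/H}f(xH)\,\overline{\psi(xH)}\,d\mu_{G/H}(xH)=\mathcal F_{G/H}(f)(\psi)$, where $\psi\in\widehat{G/H}$ is the character with $\psi\circ\pi=\chi$ (using the isomorphism $\widehat{G/H}\cong H^\perp$, $\psi\mapsto\psi\circ\pi$). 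Thus, under this isomorphism and extension by zero off $H^\perp$, $J$ identifies the Fourier transform of $f\in L^2(G/H)$ with that of $Jf\in L^2(G)$. Since $\widetilde\varphi$ vanishes off $H^\perp$ and equals $\varphi$ on $H^\perp$, this yields $M_{\widetilde\varphi}J=JM_\varphi$ on the dense subspace $L^2(G/H)\cap L^p(G/H)$ and, after tensoring with $Id_X$, on a dense subspace of $L^p(G/H,X)$; moreover, because $\widetilde\varphi\cdot 1_{H^\perp}=\widetilde\varphi$, we get $M_{\widetilde\varphi}=M_{\widetilde\varphi}P$ on $L^2(G)$, hence $M_{\widetilde\varphi}\ot Id_X=(M_{\widetilde\varphi}\ot Id_X)P$ on a dense subspace of $L^p(G,X)$.

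Granting these relations the norm identity is immediate. Let $M_\varphi\in M_p(G/H,X)$, and take $X=\C$ first: for $g\in L^2(G)\cap L^p(G)$ one has $M_{\widetilde\varphi}g=M_{\widetilde\varphi}Pg=JM_\varphi J^{-1}Pg$, so $\norm{M_{\widetilde\varphi}g}_{L^p(G)}\leq\norm{M_\varphi}_{M_p(G/H)}\norm{g}_{L^p(G)}$; hence $M_{\widetilde\varphi}\in M_p(G)$ and every operator above is legitimate. Running the same identity with $Id_X$ tensored on, namely $(M_{\widetilde\varphi}\ot Id_X)g=J(M_\varphi\ot Id_X)J^{-1}Pg$ on the dense subspace $\big(L^2(G)\cap L^p(G)\big)\ot X$ of $L^p(G,X)$, and using that $J$ is isometric and $J^{-1}P$ is a contraction, we obtain $\norm{M_{\widetilde\varphi}}_{M_p(G,X)}\leq\norm{M_\varphi}_{M_p(G/H,X)}$. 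For the reverse inequality apply $M_{\widetilde\varphi}\ot Id_X$ to $Jh$ with $h\in L^p(G/H,X)$: by $M_{\widetilde\varphi}J=JM_\varphi$ and the isometry property of $J$, $\norm{(M_\varphi\ot Id_X)h}=\norm{(M_{\widetilde\varphi}\ot Id_X)Jh}\leq\norm{M_{\widetilde\varphi}}_{M_p(G,X)}\norm{h}$. As $\varphi\mapsto\widetilde\varphi$ is linear, $M_\varphi\mapsto M_{\widetilde\varphi}$ is an isometry $M_p(G/H,X)\to M_p(G,X)$.

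The only genuine work lies in the bookkeeping: pinning down the Haar normalisations so that $J$ is \emph{exactly} isometric and intertwines $M_\varphi$ with $M_{\widetilde\varphi}$ with no spurious constant, and justifying the passages from continuous compactly supported functions to general elements of $L^p$ and from the scalar to the vector-valued setting (density of $L^2\cap L^p$, and the scalar Weil formula applied to pointwise $X$-norms). There is no conceptual obstruction beyond that; in particular the compactness of $H$ is used only to normalise $\mu_H$ as a probability measure, which is what makes $J$ isometric.
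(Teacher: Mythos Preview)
Your proof is correct and follows essentially the same approach as the paper: your isometry $J$ is the paper's $\Phi_p\ot Id_X$, and your averaging projection $P$ is precisely $\Phi_p\,\Phi_{p^*}^*$ (equivalently, $J^{-1}P$ is the paper's $\Phi_{p^*}^*\ot Id_X$), so you have simply filled in the ``straightforward'' details that the paper leaves to the reader, including the explicit intertwining relations $M_{\widetilde\varphi}J=JM_\varphi$ and $M_{\widetilde\varphi}=M_{\widetilde\varphi}P$.
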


\begin{proof}
We denote $\pi \colon G\to G/H$ the canonical map. We use the Haar
measures given by Proposition \ref{Prop Weil formula}. We can
suppose that $\mu_H(H)=1$. Using the Weil's formula, it is not
difficult to prove that the linear map
\begin{equation*}
\begin{array}{cccc}
   \Phi_p: &   L^p(G/H)  &  \longrightarrow   &   L^p(G)   \\
    &   f  &  \longmapsto       &   f\circ \pi \\
\end{array}
\end{equation*}
and its tensorisation $\Phi_p\ot Id_{X}\colon L^p(G/H,X) \to
L^p(G,X)$ are isometries. Note that the adjoint map $\Phi_{p^*}^*$
and the orthogonal projection of $L^2(G)$ onto
$\Phi_2\big(L^2(G/H)\big)$ coincide on $L^2(G)\cap L^{p}(G)$.
Moreover, it is easy to see that the linear map $\Phi_{p^*}^*\ot
Id_{X}$ is well-defined and contractive. The end of the proof is
straightforward and left to the reader.
\end{proof}

Recall the following structure theorem for locally compact abelian
groups, see e.g. \cite[Theorem 24.30]{HR}.
\begin{thm}
\label{Th structure} Any locally compact abelian group is isomorphic
to a product $\R^n\times G_0$ where $n\geq 0$ is an integer and
$G_0$ is a locally compact abelian group containing a compact
subgroup $K$ such that $G_0/K$ is discrete.
\end{thm}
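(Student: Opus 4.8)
The plan is to recognize this as the classical structure theorem for locally compact abelian groups and to reassemble it from the standard ingredients, so I would give only a sketch with references to \cite{HR}.

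First I would reduce to the compactly generated case. Since $G$ is locally compact, I would fix a relatively compact open symmetric neighbourhood $U$ of the identity $0$ and let $H$ be the subgroup of $G$ generated by $U$, that is, the union of the sets $U+\cdots+U$ ($k$ summands, $k\geq 1$). Then $H$ is open, hence closed, in $G$, and since $H$ is generated by the compact set $\overline{U}$ it is a compactly generated locally compact abelian group.

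Next I would invoke the structure theorem for compactly generated locally compact abelian groups: $H$ is topologically isomorphic to $\R^n\times\Z^m\times K_0$ for some integers $n,m\geq 0$ and some compact abelian group $K_0$. In particular $H$ contains a closed subgroup $V$ topologically isomorphic to $\R^n$, and the quotient $H/V$ is topologically isomorphic to $\Z^m\times K_0$, which contains the compact open subgroup $\{0\}\times K_0$. Then I would push this to $G$: the subgroup $V\cong\R^n$ is closed in $G$, and since $H$ is open in $G$ and the quotient map $q\colon G\to G/V$ is open, $q(H)=H/V$ is an open subgroup of $G_0:=G/V$; hence $G_0$ contains a compact open subgroup $K$, and therefore $G_0/K$ is discrete. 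Finally, a closed vector subgroup of a locally compact abelian group is always a topological direct summand, so $G$ is topologically isomorphic to $V\times(G/V)=\R^n\times G_0$, as required.

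The main obstacle is that two of these steps are genuinely deep classical results rather than routine verifications: the structure theorem for compactly generated groups and the fact that a closed copy of $\R^n$ splits off as a direct summand. For both I would simply refer to \cite{HR}. The remaining steps — producing an open compactly generated subgroup and transporting the compact open subgroup through $q$ — are elementary.
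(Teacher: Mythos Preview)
The paper does not prove this statement at all: it is quoted as a classical structure theorem and simply referenced as \cite[Theorem 24.30]{HR}. So there is no ``paper's own proof'' to compare against.

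Your sketch is a correct outline of the standard argument. The steps are sound: the open subgroup $H$ generated by a relatively compact symmetric neighbourhood is indeed compactly generated and closed; the structure theorem for compactly generated locally compact abelian groups gives $H\cong\R^n\times\Z^m\times K_0$; the image $q(H)\cong\Z^m\times K_0$ is open in $G_0=G/V$, so $\{0\}\times K_0$ is a compact open subgroup of $G_0$; and the splitting of the closed vector subgroup $V\cong\R^n$ as a topological direct summand is the final ingredient. You correctly flag the two genuinely deep inputs (the compactly generated structure theorem and the splitting of $\R^n$) and defer them to \cite{HR}, which is exactly what the paper does for the whole statement. For the purposes of this paper your level of detail already exceeds what is needed: a bare citation suffices.
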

Let $(G_i)_{i\in I}$ be a family of groups and let $\Pi_{i\in I}
G_i$ be the cartesian product of the groups $G_i$. Recall that the
direct sum $\oplus_{i\in I} G_i$ of the group $G_i$ is the set of
all $(x_i)_{i\in I}\in \Pi_{i\in I} G_i$ such that $x_i=e_i$ for all
but a finite set of indices where $e_i$ is the neutral element of
$G_i$. The group $\oplus_{i\in I} G_i$ is a subgroup of $\Pi_{i\in
I} G_i$. Recall that a group of bounded order is a group such that
every element has finite order and the order of each element is less
than some fixed positive integer. Note the next result \cite[page
449]{HR}.
\begin{thm}
\label{Th struct group bounded} Every abelian group $G$ (without
topology) of bounded order is isomorphic to a direct sum
$\oplus_{i\in I} \mathbb{Z}/q_i^{r_i}\mathbb{Z}$ of cyclic groups,
where only finitely many distinct
primes $q_i$ and positive integers $r_i$ occur. %Conversely, every
%such group has bounded order.
\end{thm}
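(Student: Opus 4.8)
The plan is to prove this as a purely algebraic fact (the Pr\"ufer--Baer theorem), independent of any analysis, by primary decomposition followed by induction on the exponent. Since $G$ has bounded order there is an integer $N\geq 1$ with $Nx=0$ for all $x\in G$; writing $N=\prod_{q}q^{n_q}$ over the finitely many primes $q\mid N$, the usual primary decomposition gives $G=\bigoplus_{q\mid N}G_q$, where $G_q=\{x\in G:q^{n_q}x=0\}$ is the $q$-primary component, an abelian $q$-group of exponent dividing $q^{n_q}$. So it suffices to prove: an abelian $p$-group $H$ of exponent $p^n$ is a direct sum of cyclic groups $\mathbb{Z}/p^k\mathbb{Z}$ with $1\leq k\leq n$. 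Granting this, each $G_q$ involves at most $n_q$ isomorphism types of summands, so $G$ is a direct sum of cyclic groups with only finitely many distinct primes and exponents occurring, which is the assertion.

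I would prove the $p$-group statement by induction on $n$. For $n=1$, $H$ is a vector space over $\mathbb{F}_p=\mathbb{Z}/p\mathbb{Z}$, and a basis $(a_i)_{i\in I}$ gives $H=\bigoplus_i\langle a_i\rangle$ with each $\langle a_i\rangle\cong\mathbb{Z}/p\mathbb{Z}$. For the inductive step, assume the claim for exponent $p^{n-1}$ and let $H$ have exponent $p^n$, $n\geq 2$. The subgroup $p^{n-1}H$ is killed by $p$, hence is an $\mathbb{F}_p$-vector space; choose a basis of it and write it in the form $(p^{n-1}a_i)_{i\in I}$ with $a_i\in H$ (necessarily of order $p^n$). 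Put $A=\sum_i\langle a_i\rangle$. A short $p$-adic valuation argument shows the sum is direct, so $A\cong\bigoplus_i\mathbb{Z}/p^n\mathbb{Z}$ and $p^{n-1}A=p^{n-1}H$. Then $H/A$ has exponent dividing $p^{n-1}$, since $p^{n-1}g\in p^{n-1}H\subseteq A$ for every $g\in H$, so by the induction hypothesis $H/A=\bigoplus_j\langle\overline{c_j}\rangle$ with $\overline{c_j}$ of order $p^{m_j}$, $m_j\leq n-1$.

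The crucial point is lifting the generators. Given a representative $c_j\in H$ of $\overline{c_j}$, one has $p^{m_j}c_j\in A$ and $p^{n-m_j}(p^{m_j}c_j)=p^nc_j=0$; since an element of $A\cong\bigoplus\mathbb{Z}/p^n\mathbb{Z}$ of order dividing $p^{n-m_j}$ is automatically divisible by $p^{m_j}$ in $A$, we may write $p^{m_j}c_j=p^{m_j}a_j'$ with $a_j'\in A$ and replace $c_j$ by $c_j-a_j'$, which still represents $\overline{c_j}$ and now has order exactly $p^{m_j}$. Setting $B=\sum_j\langle c_j-a_j'\rangle$, the independence of the $\overline{c_j}$ in $H/A$ forces $A\cap B=0$ and makes the sum defining $B$ direct, while $A+B=H$ because the $\overline{c_j}$ generate $H/A$. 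Hence $H=A\oplus B$ is a direct sum of cyclic $p$-groups, completing the induction.

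The main obstacle is this lifting step: one must arrange the lifts to have the same order as their images in $H/A$, and this works precisely because $A$ was built to be a direct sum of copies of $\mathbb{Z}/p^n\mathbb{Z}$, so that "order dividing $p^{n-m_j}$" and "divisible by $p^{m_j}$" coincide inside $A$. A tempting shortcut --- take a maximal family of cyclic subgroups of $H$ with direct sum and hope it exhausts $H$ --- fails, since a maximal independent family of cyclic subgroups need not span (for instance $\langle(1,0)\rangle+\langle(0,p)\rangle$ in $\mathbb{Z}/p\mathbb{Z}\oplus\mathbb{Z}/p^2\mathbb{Z}$); an alternative cure is Pr\"ufer's lemma that a cyclic subgroup of maximal order in a bounded $p$-group is a direct summand, combined with a Zorn's lemma exhaustion, but the induction above seems the cleanest route.
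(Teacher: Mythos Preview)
Your argument is correct: the primary decomposition reduces to the $p$-group case, and your induction on the exponent is the standard Pr\"ufer--Baer proof. The key lifting step is handled properly, since in $A\cong\bigoplus_i\mathbb{Z}/p^n\mathbb{Z}$ an element annihilated by $p^{n-m_j}$ is indeed $p^{m_j}$-divisible, and your verification that $A\cap B=0$ and that the sum defining $B$ is direct follows from the independence of the $\overline{c_j}$ exactly as you say. The counterexample at the end is also correct.

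There is nothing to compare, however: the paper does not prove this theorem. It is quoted as a classical structure result with a reference to Hewitt--Ross \cite[page 449]{HR}, and the paper only uses the immediate consequence that an infinite abelian group of bounded order contains a copy of $\oplus_1^\infty\mathbb{Z}/q\mathbb{Z}$ for some prime $q$. Your write-up therefore supplies a self-contained proof where the paper simply cites the literature; this is more than what is needed for the paper's purposes, but it is entirely sound.
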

This theorem implies that an infinite abelian group $G$ of bounded
order contains a direct sum $\oplus_{1}^{\infty}
\mathbb{Z}/q\mathbb{Z}$ where $q$ is a fixed prime.

%%%%%%%%%%%%%%%%%%%%%%%%%%%%%%%%%%%%%%%%%%%%%%%%%%%%%%%%%%%%%%%%%%%%%%%%%%%%%%%%%%%%%%%%%%%%%%%%%%%%%%%%%%%%%%%%%%%%%%%%%%%%%%%%%%%%%%%%%%%%%%%%%%%%%%
%%%%%%%%%%%%%%%%%%%%%%%%%%%%%%%%%%%%%%%%%%%%%%%%%%%%%%%%%%%%%%%%%%%%%%%%%%%%%%%%%%%%%%%%%%%%%%%%%%%%%%%%%%%%%%%%%%%%%%%%%%%%%%%%%%%%%%%%%%%%%%%%%%%%%%
%%%%%%%%%%%%%%%%%%%%%%%%%%%%%%%%%%%%%%%%%%%%%%%%%%%%%%%%%%%%%%%%%%%%%%%%%%%%%%%%%%%%%%%%%%%%%%%%%%%%%%%%%%%%%%%%%%%%%%%%%%%%%%%%%%%%%%%%%%%%%%%%%%%%%%
\section{Unconditionality and Fourier multipliers}
%%%%%%%%%%%%%%%%%%%%%%%%%%%%%%%%%%%%%%%%%%%%%%%%%%%%%%%%%%%%%%%%%%%%%%%%%%%%%%%%%%%%%%%%%%%%%%%%%%%%%%%%%%%%%%%%%%%%%%%%%%%%%%%%%%%%%%%%%%%%%%%%%%%%%%
%%%%%%%%%%%%%%%%%%%%%%%%%%%%%%%%%%%%%%%%%%%%%%%%%%%%%%%%%%%%%%%%%%%%%%%%%%%%%%%%%%%%%%%%%%%%%%%%%%%%%%%%%%%%%%%%%%%%%%%%%%%%%%%%%%%%%%%%%%%%%%%%%%%%%%
%%%%%%%%%%%%%%%%%%%%%%%%%%%%%%%%%%%%%%%%%%%%%%%%%%%%%%%%%%%%%%%%%%%%%%%%%%%%%%%%%%%%%%%%%%%%%%%%%%%%%%%%%%%%%%%%%%%%%%%%%%%%%%%%%%%%%%%%%%%%%%%%%%%%%%
Suppose $1 < p < \infty$. Let $G$ be a locally compact group and $X$
a Banach space. If $t\in G$, we denote by $\tau_t$ the translation
operator on $L^p(G)$ defined by $\tau_t(f)(s)=f(t^{-1}s)$ where $f
\in L^p(G)$ and $s\in G$. We start with the next result.
\begin{lemma}
\label{Lemma tau imply Hilbert} Let $G$ be an infinite locally
compact group and $X$ a Banach space.  If the set $\big\{\tau_t \ot
Id_{X}\ | \ t\in G \big\}$ is $R$-bounded in $B\big(L^2(G,X)\big)$
then the Banach space $X$ is isomorphic to a Hilbert space.
\end{lemma}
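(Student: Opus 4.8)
The plan is to deduce from the $R$-boundedness hypothesis that $X$ has both type $2$ and cotype $2$, and then to invoke Kwapień's theorem, which characterizes the Banach spaces isomorphic to a Hilbert space as precisely those having both type $2$ and cotype $2$ (see e.g. \cite[Theorem 12.20]{DJT}). Throughout, I fix a left Haar measure $\mu$ on $G$ and let $C\geq 0$ be a constant witnessing the $R$-boundedness of $F=\big\{\tau_t\ot Id_X\mid t\in G\big\}$ in $B\big(L^2(G,X)\big)$, in the sense recalled in Section 2.

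The first step is an elementary observation about the group: for every integer $n\geq 1$ there exist a \emph{compact} neighbourhood $A$ of the neutral element $e$ of $G$ and points $t_1,\dots,t_n\in G$ such that the translates $t_1A,\dots,t_nA$ are pairwise disjoint. Indeed, since $G$ is infinite one may first pick $n$ distinct points $t_1,\dots,t_n$; for each pair $i\neq j$ the element $t_i^{-1}t_j$ is different from $e$, so by continuity of the group operations and the Hausdorff property there is a symmetric neighbourhood $V_{ij}$ of $e$ with $t_i^{-1}t_j\notin V_{ij}V_{ij}$. Then $A_0=\bigcap_{i\neq j}V_{ij}$ is a symmetric neighbourhood of $e$ satisfying $t_iA_0\cap t_jA_0=\emptyset$ whenever $i\neq j$, and replacing $A_0$ by a compact neighbourhood of $e$ contained in it (which exists because $G$ is locally compact) gives the desired set $A$; note that $0<\mu(A)<\infty$, since $A$ is compact with nonempty interior.

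The second step is to feed suitable test functions into the $R$-boundedness inequality. Fix $n$, the data $A,t_1,\dots,t_n$ from the first step, and vectors $x_1,\dots,x_n\in X$. Using that $\tau_t(1_A\ot x)=1_{tA}\ot x$, that $\tau_{t^{-1}}(1_{tA}\ot x)=1_A\ot x$, that $\mu(tA)=\mu(A)$, and that the sets $t_iA$ are pairwise disjoint, a direct computation with the definition of the norm of $\Rad\big(L^2(G,X)\big)$ gives
\[
\Bgnorm{\sum_{i=1}^{n}\varepsilon_i\ot \big(1_A\ot x_i\big)}_{\Rad(L^2(G,X))}=\sqrt{\mu(A)}\,\Bgnorm{\sum_{i=1}^{n}\varepsilon_i\ot x_i}_{\Rad(X)}
\]
and
\[
\Bgnorm{\sum_{i=1}^{n}\varepsilon_i\ot \big(1_{t_iA}\ot x_i\big)}_{\Rad(L^2(G,X))}=\sqrt{\mu(A)}\,\Big(\sum_{i=1}^{n}\norm{x_i}_X^2\Big)^{1/2}.
\]
Applying the $R$-boundedness of $F$ to the operators $\tau_{t_i}\ot Id_X$ and the elements $1_A\ot x_i$, and noting $\tau_{t_i}(1_A\ot x_i)=1_{t_iA}\ot x_i$, one obtains, after dividing by $\sqrt{\mu(A)}$, that $\big(\sum_i\norm{x_i}_X^2\big)^{1/2}\leq C\bnorm{\sum_i\varepsilon_i\ot x_i}_{\Rad(X)}$, i.e. $X$ has cotype $2$. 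Applying it instead to the operators $\tau_{t_i^{-1}}\ot Id_X$ (which again lie in $F$) and the elements $1_{t_iA}\ot x_i$, and noting $\tau_{t_i^{-1}}(1_{t_iA}\ot x_i)=1_A\ot x_i$, one obtains $\bnorm{\sum_i\varepsilon_i\ot x_i}_{\Rad(X)}\leq C\big(\sum_i\norm{x_i}_X^2\big)^{1/2}$, i.e. $X$ has type $2$. Kwapień's theorem then yields that $X$ is isomorphic to a Hilbert space.

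I do not expect a deep obstacle here: the real content is that a single $R$-boundedness assumption simultaneously produces type $2$ and cotype $2$ — one uses the translations $\tau_{t_i}$ to spread a single bump $1_A$ onto pairwise disjoint supports, which forces cotype $2$, and the inverse translations $\tau_{t_i^{-1}}$ to collapse disjoint bumps back onto the common support, which forces type $2$. The only point requiring a little care is the first step, namely the existence of arbitrarily many pairwise disjoint translates of a fixed compact neighbourhood of $e$; this, however, works uniformly for every infinite locally compact group, discrete or not, abelian or not.
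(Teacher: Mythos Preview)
Your proof is correct and follows essentially the same approach as the paper's: choose pairwise disjoint translates $t_1A,\dots,t_nA$ of a compact neighbourhood of the identity, compute the two Rademacher norms exactly as you do, apply the $R$-boundedness of $\{\tau_{t_i}\ot Id_X\}$ to obtain cotype~$2$ and of $\{\tau_{t_i^{-1}}\ot Id_X\}$ to obtain type~$2$, and conclude with Kwapie\'n's theorem. The only difference is that you spell out the construction of the neighbourhood $A$ in more detail than the paper does.
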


\begin{proof}
Let $n\geq 1$ be an integer and $t_1,\ldots,t_n$ be distinct
elements of $G$. There exists a compact neighborhood $V$ of the
neutral element $e_G$ of $G$ such that the sets $t_1V,\ldots,t_nV$
are disjoint. We have $\mu_G(V)>0$. For any integer $1\leq i\leq n$,
we let $V_i=t_iV$. First note that, for any $x_1,\ldots,x_n\in X$,
we have
\begin{align}
\label{serie equations}
 \Bigg(\sum_{i=1}^{n} \bnorm{1_{V_i}}_{L^2(G)}^2 \norm{x_i}_{X}^2\Bigg)^{\frac{1}{2}}
 &= \Bigg(\int_{\Omega_0} \sum_{i=1}^{n}\bnorm{\varepsilon_i(\omega)1_{V_i}\ot x_i}_{L^2(G,X)}^2d\omega\Bigg)^{\frac{1}{2}}  \nonumber\\
 &= \Bigg(\int_{\Omega_0} \Bgnorm{\sum_{i=1}^{n}\varepsilon_i(\omega)1_{V_i}\ot x_i}_{L^2(G,X)}^2d\omega\Bigg)^{\frac{1}{2}}\hspace{0.4cm}\text{since the $V_i$'s are disjoint}\nonumber\\
 &=\Bgnorm{\sum_{i=1}^{n}\varepsilon_i \ot 1_{V_i} \ot x_{i}}_{\Rad(L^2(G,X))}.
\end{align}
We deduce that
\begin{align*}
 \Bigg(\sum_{i=1}^{n} \bnorm{1_{V_i}}_{L^2(G)}^2 \norm{x_i}_{X}^2\Bigg)^{\frac{1}{2}}
    &= \Bgnorm{\sum_{i=1}^{n}\varepsilon_i \ot (\tau_{t_i}\ot Id_{X})(1_{V} \ot x_{i})}_{\Rad(L^2(G,X))}\\
    &\lesssim \Bgnorm{\sum_{i=1}^{n}\varepsilon_i \ot 1_{V} \ot x_{i}}_{\Rad(L^2(G,X))}\\
    &= \norm{1_{V}}_{L^2(G)}\Bgnorm{\sum_{i=1}^{n}\varepsilon_i\ot x_{i}}_{\Rad(X)}.
\end{align*}
For any integer $1\leq i\leq n$, we have
$\norm{1_{V}}_{L^2(G)}=\bnorm{1_{V_i}}_{L^2(G)}$. We infer that
\begin{align*}
 \Bigg(\sum_{i=1}^{n} \norm{x_i}_{X}^2\Bigg)^{\frac{1}{2}}
    &\lesssim \Bgnorm{\sum_{i=1}^{n}\varepsilon_i\ot x_{i}}_{\Rad(X)}.
\end{align*}
We deduce that $X$ has cotype 2. Now, for any $x_1,\ldots,x_n\in X$,
we have
\begin{align*}
\bnorm{1_{V}}_{L^2(G)} \Bgnorm{\sum_{i=1}^{n}\varepsilon_i\ot
x_{i}}_{\Rad(X)}
    &= \Bgnorm{\sum_{i=1}^{n}\varepsilon_i \ot 1_{V} \ot x_{i}}_{\Rad(L^2(G,X))}\\
    &= \Bgnorm{\sum_{i=1}^{n}\varepsilon_i \ot (\tau_{t_i^{-1}}\ot Id_{X})(\tau_{t_i}\ot Id_{X})(1_{V} \ot x_{i})}_{\Rad(L^2(G,X))}\\
    &\lesssim \Bgnorm{\sum_{i=1}^{n}\varepsilon_i \ot 1_{V_i} \ot x_{i}}_{\Rad(L^2(G,X))}\\
    &= \Bigg(\sum_{i=1}^{n} \bnorm{1_{V_i}}_{L^2(G)}^2 \norm{x_i}_{X}^2\Bigg)^{\frac{1}{2}}\hspace{0.4cm}\text{by (\ref{serie equations})}.
\end{align*}
Using, one more time, the equality
$\norm{1_{V}}_{L^2(G)}=\bnorm{1_{V_i}}_{L^2(G)}$ for any integer
$1\leq i\leq n$, we deduce that
\begin{equation*}
\Bgnorm{\sum_{i=1}^{n}\varepsilon_i\ot x_{i}}_{\Rad(X)}\lesssim
\Bigg(\sum_{i=1}^{n} \norm{x_i}_{X}^2\Bigg)^{\frac{1}{2}}.
\end{equation*}
We deduce that $X$ has type 2. Hence, by  Kwapie\'{n}'s theorem
\cite[Proposition 3.1]{Kwa} (or \cite[Corollary 12.20]{DJT}), the
Banach space $X$ is isomorphic to a Hilbert space.
\end{proof}

Let $G$ be a locally compact abelian group and $X$ be a Banach
space. If $X$ is isomorphic to a Hilbert space, it is clear that we
have a canonical isomorphism $M_2(G,X)= M_2(G)$. We will show the
reverse implication for \emph{infinite} locally compact abelian
groups.

We begin with the case of $\T$. We give a proof which does not use
\cite{Def}.  We will use the elementary lemma left the reader.
\begin{lemma}
\label{Lemma passage limite} Let $g:\T\times \T\to \C$ be a
continuous complex function. We have
$$
\int_{\T}g\big(z,z^k\big)dz\xra[k \to +\infty]{}\int_{\T \times \T
}g(z,z')dzdz'.
$$
\end{lemma}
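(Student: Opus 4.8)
The plan is to reduce the claim to the single case of a pure product of characters and then use linearity and density. Since $g\colon\T\times\T\to\C$ is continuous on a compact group, the trigonometric polynomials in two variables are dense in $C(\T\times\T)$ for the uniform norm (Stone–Weierstrass, or the classical two-dimensional Fejér theorem). Because both sides of the asserted identity are bounded linear functionals of $g$ with respect to $\norm{\cdot}_\infty$ — indeed $\bnorm{\int_\T g(z,z^k)\,dz}\leq\norm{g}_\infty$ uniformly in $k$ and $\bnorm{\int_{\T\times\T}g(z,z')\,dz\,dz'}\leq\norm{g}_\infty$ — it suffices to verify the limit on a spanning set, namely the monomials $g(z,z')=z^{m}(z')^{n}$ with $m,n\in\Z$.

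For such a monomial one computes directly. On the one hand, $\int_{\T\times\T}z^{m}(z')^{n}\,dz\,dz'=\delta_{m,0}\,\delta_{n,0}$. On the other hand, $\int_\T g(z,z^k)\,dz=\int_\T z^{m}\,z^{kn}\,dz=\int_\T z^{m+kn}\,dz=\delta_{m+kn,0}$. Now fix $m,n$. If $n=0$, then $\delta_{m+kn,0}=\delta_{m,0}$ for every $k$, which already equals the limit value $\delta_{m,0}\delta_{0,0}$. If $n\neq0$, then for $|k|$ large enough (precisely $|k|>|m|/|n|$) we have $m+kn\neq0$, so $\delta_{m+kn,0}=0$ for all large $k$, and this matches the limit value $\delta_{m,0}\delta_{n,0}=0$ since $n\neq0$. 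Hence $\int_\T g(z,z^k)\,dz\to\int_{\T\times\T}g(z,z')\,dz\,dz'$ for every monomial $g$.

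To pass from monomials to arbitrary continuous $g$: given $\varepsilon>0$, choose a trigonometric polynomial $P$ on $\T\times\T$ with $\norm{g-P}_\infty<\varepsilon$. Then for every $k$, $\bnorm{\int_\T g(z,z^k)\,dz-\int_\T P(z,z^k)\,dz}\leq\varepsilon$ and $\bnorm{\int_{\T\times\T}g\,dz\,dz'-\int_{\T\times\T}P\,dz\,dz'}\leq\varepsilon$. By the monomial case (applied to each of the finitely many terms of $P$), $\int_\T P(z,z^k)\,dz\to\int_{\T\times\T}P\,dz\,dz'$ as $k\to\infty$, so $\limsup_{k}\bnorm{\int_\T g(z,z^k)\,dz-\int_{\T\times\T}g\,dz\,dz'}\leq 2\varepsilon$. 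Letting $\varepsilon\to0$ gives the result. There is no real obstacle here; the only point requiring a moment's care is the uniform boundedness (in $k$) of the functionals $g\mapsto\int_\T g(z,z^k)\,dz$, which legitimizes the $3\varepsilon$-argument, and this is immediate since Haar measure on $\T$ is a probability measure.
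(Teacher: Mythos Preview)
Your proof is correct and complete: the reduction to monomials via Stone--Weierstrass together with the uniform bound $\bnorm{\int_\T g(z,z^k)\,dz}\leq\norm{g}_\infty$ is exactly the standard argument, and your case analysis on the monomials $z^m(z')^n$ is accurate. The paper does not actually give a proof of this lemma --- it is explicitly ``left to the reader'' --- so there is nothing to compare against; your write-up supplies precisely the kind of elementary verification the author had in mind.
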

Now, we can prove the following proposition.
\begin{prop}
\label{Prop incond T} Let $X$ be a Banach space. We have a canonical
isomorphism $M_2(G,\T)= M_2(\T)$ if and only if the space $X$ is
isomorphic to a Hilbert space.
\end{prop}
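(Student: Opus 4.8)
\noindent The ``if'' direction is immediate (if $X$ is isomorphic to a Hilbert space then $M_2(\T,X)=M_2(\T)$ canonically), so the plan concerns the converse. Assume $M_2(\T,X)=M_2(\T)$. Since $\varphi\mapsto M_\varphi$ identifies $M_2(\T)$ isometrically with $\ell^\infty(\Z)$, a routine closed graph argument first produces a constant $C>0$ with $\bnorm{M_\varphi\ot Id_X}_{L^2(\T,X)\to L^2(\T,X)}\leq C\norm{\varphi}_{\ell^\infty(\Z)}$ for every $\varphi\in\ell^\infty(\Z)$. By Lemma \ref{Lemma tau imply Hilbert} applied to the infinite locally compact group $\T$, it then suffices to prove that the family $\big\{\tau_t\ot Id_X : t\in\T\big\}$ is $R$-bounded in $B\big(L^2(\T,X)\big)$.

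\noindent The strategy is to encode a Rademacher average of translations as a \emph{single} scalar Fourier multiplier on $\T$. Fix $t_1,\dots,t_n\in\T$ and $f_1,\dots,f_n\in L^2(\T,X)$; by density and the continuity of $t\mapsto\tau_t f$ on trigonometric polynomials, I would reduce to the case where each $f_j=\sum_{|l|\leq L}x_{j,l}\,e_l$ is a trigonometric polynomial, where $e_l(z)=z^l$ and $x_{j,l}\in X$. Write $\lambda_{j,l}\in\T$ for the (unimodular) eigenvalue of $\tau_{t_j}$ on $e_l$, so that $\tau_{t_j}f_j=\sum_l \lambda_{j,l}x_{j,l}e_l$. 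Because the Hadamard set $\{2^j:j\geq1\}$ is a Sidon set in $\widehat{\T}=\Z$, Theorem \ref{Th transfer Sidon}, applied with the Banach space $L^2(\T,X)$ in the role of $X$ and $p=2$, gives constants independent of $n$ such that
\[
\Bgnorm{\sum_{j=1}^{n}\varepsilon_j\ot h_j}_{\Rad(L^2(\T,X))}\ \approx\ \Bgnorm{\sum_{j=1}^{n}e_{2^j}\ot h_j}_{L^2(\T\times\T,X)},\qquad h_j\in L^2(\T,X).
\]
Using this with $h_j=\tau_{t_j}f_j$ and with $h_j=f_j$, the estimate to be proved becomes $\norm{\Phi}_{L^2(\T^2,X)}\leq C\norm{\Psi}_{L^2(\T^2,X)}$, where
\[
\Phi(w,t)=\sum_{j,l}\lambda_{j,l}\,x_{j,l}\,w^{2^j}t^l,\qquad \Psi(w,t)=\sum_{j,l}x_{j,l}\,w^{2^j}t^l
\]
are continuous functions on $\T\times\T$.

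\noindent Now Lemma \ref{Lemma passage limite} collapses the two circles into one: applied to $g(w,t)=\norm{\Phi(w,t)}_X^2$ and to $g(w,t)=\norm{\Psi(w,t)}_X^2$, it gives $\norm{\Phi}_{L^2(\T^2,X)}^2=\lim_{k\to\infty}\int_\T\norm{\Phi(z,z^k)}_X^2\,dz$ and likewise for $\Psi$. For $k$ large enough (e.g. $k>2^n$) the integers $2^j+kl$ with $1\leq j\leq n$, $|l|\leq L$, are pairwise distinct, so the function $\varphi_k\in\ell^\infty(\Z)$ defined by $\varphi_k(2^j+kl)=\lambda_{j,l}$ and $\varphi_k\equiv0$ elsewhere satisfies $\norm{\varphi_k}_{\ell^\infty(\Z)}=1$ and $(M_{\varphi_k}\ot Id_X)\big(\Psi(z,z^k)\big)=\Phi(z,z^k)$ in $L^2(\T,X)$. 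The boundedness of multipliers from the first step then gives $\int_\T\norm{\Phi(z,z^k)}_X^2\,dz\leq C^2\int_\T\norm{\Psi(z,z^k)}_X^2\,dz$ for all large $k$, and letting $k\to\infty$ yields $\norm{\Phi}_{L^2(\T^2,X)}\leq C\norm{\Psi}_{L^2(\T^2,X)}$. Chaining this with the two Sidon equivalences shows that $\big\{\tau_t\ot Id_X:t\in\T\big\}$ is $R$-bounded, with a constant depending only on $C$ and the Sidon constant of $\{2^j\}$, and Lemma \ref{Lemma tau imply Hilbert} concludes that $X$ is isomorphic to a Hilbert space.

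\noindent The one place the hypothesis intervenes is the construction of $\varphi_k$, which is legitimate precisely because the frequencies $2^j+kl$ are distinct. The points that need care (but are routine) are that the reduction to trigonometric polynomials is harmless, since both sides of the $R$-boundedness inequality are continuous in $(f_j)_j$ with $\tau_{t_j}$ isometric, and that the constants produced by Theorem \ref{Th transfer Sidon} and Lemma \ref{Lemma passage limite} are independent of $n$.
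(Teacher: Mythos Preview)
Your argument is correct and reaches the same conclusion via Lemma \ref{Lemma tau imply Hilbert}, but the path you take to $R$-boundedness of $\{\tau_t\ot Id_X\}$ is genuinely different from the paper's. The paper first proves that $X$ has property $(\alpha)$ (by a contradiction argument using the same Sidon/diagonal collapse via Lemma \ref{Lemma passage limite} that you employ), then deduces that $L^2(\T,X)$ has property $(\alpha)$, and finally invokes the abstract $C(K)$-representation result Theorem \ref{bounded homo is R bounded} to conclude that the homomorphism $\varphi\mapsto M_\varphi\ot Id_X$ is $R$-bounded, hence in particular the translations are. You bypass both property $(\alpha)$ and Theorem \ref{bounded homo is R bounded} entirely: by encoding a given Rademacher sum $\sum_j\varepsilon_j\ot\tau_{t_j}f_j$ as a \emph{single} scalar multiplier $M_{\varphi_k}$ acting on the collapsed function $\Psi(z,z^k)$, you obtain the $R$-bound directly from the uniform multiplier bound and the Sidon equivalence. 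This is more elementary and self-contained; the paper's route, on the other hand, is more modular and yields property $(\alpha)$ for $X$ as an explicit intermediate conclusion. A couple of cosmetic remarks: the phrase ``continuity of $t\mapsto\tau_t f$'' is not what justifies the reduction to trigonometric $f_j$ (density plus the fact that each $\tau_{t_j}$ is an isometry does), and you should perhaps note that Lemma \ref{Lemma passage limite} is stated for scalar $g$ but applies verbatim to the continuous real-valued $g(w,t)=\norm{\Phi(w,t)}_X^2$.
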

\begin{proof}
Suppose that $M_2(\T,X)=M_2(\T)$. For any integer $i\geq 1$, we let
$n_i=2^{2i}$ and $m_i=2^{2i+1}$. The sequences $(n_i)_{i\geq 1}$ and
$(m_j)_{j\geq 1}$ are Sidon sets for the group $\T$. We will use the
fact that there exists arbitrary large integers $k\geq 1$ such the
map $(i,j) \to n_i+km_{j}$ is one-to-one. Note that, by Theorem
\ref{Th transfer Sidon}, we have an equivalence
\begin{equation}
\label{equivalence epseps hadhad} \Bgnorm{\sum_{i,j=1}^n
\varepsilon_i \ot \varepsilon_{j}\ot x_{ij}}_{\Rad(\Rad(X))}
\approx\ \Bgnorm{\sum_{i,j=1}^n e^{2\pi\sqrt{-1}n_i\cdot} \ot
e^{2\pi\sqrt{-1}m_j\cdot} \ot x_{ij}}_{L^2(\T \times \T,X)},\qquad
n\in\mathbb{N}, x_{ij}\in X.
\end{equation}
%Note that for any complex numbers $\alpha_{ij},t_{ij}\in \C$ and any
%integer $k\geq 1$ such the map $(i,j) \to n_{i}+km_{j}$ is
%one-to-one, we have
%\begin{equation}\label{inequa T}
%\Bgnorm{\sum_{i,j=1}^n
%t_{ij}\alpha_{ij}e^{2\pi\sqrt{-1}(n_i+km_j)\cdot}}_{L^2(\T)} \leq
%\sup_{1 \leq i,j \leq n} |t_{ij}| \Bgnorm{\sum_{i,j=1}^n
%\alpha_{ij}e^{2\pi\sqrt{-1}(n_i+km_j)\cdot}}_{L^2(\T)}.
%\end{equation}

Now, suppose that the Banach space $X$ does not have property
$(\alpha)$. Let $C$ be a positive constant. Then there exists an
integer $n\geq 1$, complex numbers $t_{ij}\in \C$ with $|t_{ij}|=1$
and $x_{ij}\in X$ such that $\Bnorm{\sum_{i,j=1}^n \varepsilon_i\ot
\varepsilon_{j}\ot x_{ij}}_{\Rad(\Rad(X))} \leq 1 $ with arbitrary
large $\Bnorm{\sum_{i,j=1}^n t_{ij} \varepsilon_i\ot
\varepsilon_{j}\ot x_{ij}}_{\Rad(\Rad(X))}$. Using the equivalence
(\ref{equivalence epseps hadhad}), we deduce that there exists an
integer $n\geq 1$, complex numbers $t_{ij}\in \C$ with $|t_{ij}|=1$
and $x_{ij}\in X$ such that
$$
\Bgnorm{\sum_{i,j=1}^n e^{2\pi\sqrt{-1}n_i\cdot} \ot
e^{2\pi\sqrt{-1}m_j\cdot} \ot x_{ij}}_{L^2(\T \times \T,X)} \leq
\frac{1}{2}
$$
and
$$
\Bgnorm{\sum_{i,j=1}^n e^{2\pi\sqrt{-1}n_i\cdot} \ot
e^{2\pi\sqrt{-1}m_j\cdot} \ot x_{ij}}_{L^2(\T \times \T,X)} \geq 2C.
$$
Moreover, by Lemma \ref{Lemma passage limite}, we have
\begin{equation*}
\Bgnorm{\sum_{i,j=1}^n e^{2\pi\sqrt{-1}(n_i+km_j)\cdot} \ot
x_{ij}}_{L^2(\T,X)}\xra[k \to +\infty]{}\Bgnorm{\sum_{i,j=1}^n
e^{2\pi\sqrt{-1}n_i\cdot} \ot e^{2\pi\sqrt{-1}m_j\cdot} \ot
x_{ij}}_{L^2(\T \times \T,X)}.
\end{equation*}
For some $k$ large enough, we deduce the following inequalities
\begin{equation*}
\Bgnorm{\sum_{i,j=1}^n e^{2\pi\sqrt{-1}(n_i+km_j)\cdot}\ot
x_{ij}}_{L^2(\T,X)}\leq 1
\end{equation*}
and
\begin{equation*}
\Bgnorm{\sum_{i,j=1}^n t_{ij}e^{2\pi\sqrt{-1}(n_i+km_j)\cdot}\ot
x_{ij}}_{L^2(\T,X)}> C.
\end{equation*}
We infer that the inequality (\ref{equa uncond T}) is not satisfied.
%there exists contractive Fourier multipliers on the group $\T$
%bounded on $L^2(\T,X)$ with arbitrary large norms
%$\norm{\cdot}_{L^2(\T,X)\to L^2(\T,X)}$. By the closed graph
%theorem, we infer that $M_2(\T,X)\not=M_2(\T)$.
Contradiction. Thus, the Banach space $X$ has property $(\alpha)$.

We deduce that the space $L^2(\T,X)$ also has property $(\alpha)$.
Now, note that $L^\infty(\T)$ is a commutative unital $C^*$-algebra.
By Gelfand's Theorem (see e.g. \cite[Theorem 1.20]{FO}), the Banach
algebra $L^\infty(\T)$ is isometrically isomorphic to a Banach
algebra $C(K)$ where $K$ is a compact topological space. Moreover,
we have a bounded homomorphism
\begin{equation*}
\begin{array}{cccc}
    &  L^\infty(\T)   &  \longrightarrow   & B\big(L^2(\T,X)\big)  \\
    &                            \varphi                                      &  \longmapsto       &   M_\varphi. \\
\end{array}
\end{equation*}
By Theorem \ref{bounded homo is R bounded}, we infer that this
linear map is $R$-bounded. For any $t\in G$, note that the map
$\tau_t$ is an isometric Fourier multiplier.  Hence the set
$\big\{\tau_t\ot Id_{X}\ \colon \ t\in \T \big\}$ is $R$-bounded. By
Lemma \ref{Lemma tau imply Hilbert}, we conclude that the Banach
space $X$ is isomorphic to a Hilbert space.
\end{proof}

Now, we extend Proposition \ref{Prop incond T} to the groups $\R$
and $\Z$. We use a method similar to the one of \cite[Theorems 3.4
and 3.5]{Arh1}. Since we need variants of this method later (and
also for the convenience of the reader), we include some details.
For that purpose, we need the following vectorial extension of
\cite[Proposition 3.3]{DeL}. One can prove this theorem as
\cite[Theorem 3.4]{CoW}.
\begin{thm}\label{deleuw}
Let $X$ be a Banach space. Suppose $1 < p < \infty$. Let $\psi$ be a
continuous function on $\R$ which defines a bounded Fourier
multiplier $M_\psi$ on $L^{p}(\R,X)$. Then the restriction $\psi|\Z$
of the function $\psi$ to $\Z$ defines a bounded
Fourier multiplier $M_{\psi|\Z}$ on $L^{p}(\T,X)$. %and we
%have
%$$
%\bnorm{M_{(\psi(k))_{k \in \Z}}\ot Id_{X}}_{L^p(\T,X) \xra{}
%L^p(\T,X)} \leq \bnorm{M_\psi\ot Id_{X}}_{L^p(\R,X) \xra{}
%L^p(\R,X)}.
%$$
\end{thm}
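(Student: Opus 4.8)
Theorem \ref{deleuw}: If $X$ is a Banach space, $1<p<\infty$, and $\psi$ is a continuous function on $\R$ defining a bounded Fourier multiplier $M_\psi$ on $L^p(\R,X)$, then $\psi|\Z$ defines a bounded Fourier multiplier $M_{\psi|\Z}$ on $L^p(\T,X)$, with $\norm{M_{\psi|\Z}}_{L^p(\T,X)\to L^p(\T,X)}\leq \norm{M_\psi}_{L^p(\R,X)\to L^p(\R,X)}$.

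This is the classical de Leeuw restriction theorem (scalar case: \cite[Proposition 3.3]{DeL}) in its vector-valued form. Let me think about how I would prove it.

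The plan is to follow the classical de Leeuw argument, which transfers beautifully to the vector-valued setting, exactly as the excerpt hints ("One can prove this theorem as \cite[Theorem 3.4]{CoW}"). There are essentially two standard routes.

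**Route 1 (dilation/averaging à la de Leeuw).** The idea is that restricting a continuous multiplier $\psi$ to $\Z$ should be realized as a limit of dilates. Consider, for $\varepsilon>0$, the dilated multiplier $\psi_\varepsilon(\xi) = \psi(\varepsilon\xi)$ on $\R$; dilation is an isometry on $L^p(\R,X)$ intertwining $M_\psi$ and $M_{\psi_\varepsilon}$, so $\norm{M_{\psi_\varepsilon}} = \norm{M_\psi}$ uniformly in $\varepsilon$. As $\varepsilon \to 0$... hmm, but that washes out $\psi$ to a constant, not to its restriction to $\Z$. So this route needs the "$\R^n \to \T^n$" version, not the "$\R \to \Z$" one. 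Let me reconsider.

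The correct classical route for $\R \to \Z$ is: a bounded multiplier on $L^p(\R,X)$ restricts to a bounded multiplier on $L^p(\R^{disc},X)$ — no wait, the real theorem I want is: continuous multiplier on $L^p(\R)$ restricts to multiplier on $L^p(\Z)$ with the SAME norm... but $\Z$ is the dual of $\T$. So I want: multiplier symbol $\psi$ on $\R = \widehat{\R}$ restricted to the subgroup $\Z \subset \R$ gives a multiplier on $L^p$ of the dual of $\R/\Z^\perp$... and $\Z^\perp$ inside $\R$ under the pairing... this is getting confused. Let me just recall the statement cleanly: de Leeuw's restriction theorem says $\psi$ continuous on $\R^n$ defining a multiplier on $L^p(\R^n)$ restricts to any subgroup; restricting to $2\pi\Z$ (a closed subgroup of $\R = \widehat{\T}$... no, $\R$ self-dual) gives a multiplier on $L^p$ of the dual group of the quotient, i.e. on $L^p(\T)$.

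**Route 2 (periodization, which I'll follow).** I would argue via periodization of test functions. Take a trigonometric polynomial $f = \sum_{k=-N}^N c_k e^{2\pi i k\cdot} \ot x_k$ on $\T$ (with $x_k\in X$), so $M_{\psi|\Z}f = \sum_k \psi(k)c_k e^{2\pi i k\cdot}\ot x_k$. The strategy: manufacture a function $g$ on $\R$ which "looks like" $f$ periodically modulated by a smooth bump, apply $M_\psi$ to $g$ on $\R$, and recover $M_{\psi|\Z}f$ by a limiting/sampling procedure. Concretely, fix a Schwartz function $\phi$ on $\R$ with $\widehat{\phi}$ supported near $0$ and $\phi(0)\neq 0$; for a scaling parameter $R\to\infty$, set $g_R(t) = \phi(t/R)\sum_k c_k e^{2\pi i k t}\ot x_k$. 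Then $\widehat{g_R}$ concentrates mass near the integers $k$, and $M_\psi g_R(t) \approx \phi(t/R)\sum_k \psi(k)c_k e^{2\pi i kt}\ot x_k$ for $R$ large, by continuity of $\psi$ (the bump $\widehat{\phi(\cdot/R)}$ has width $\sim 1/R \to 0$, so it only sees $\psi$ near each integer). Computing $\norm{g_R}_{L^p(\R,X)}^p = \int |\phi(t/R)|^p \norm{\sum_k c_k e^{2\pi ikt}x_k}^p dt$; rescaling $t = Rs$ and using periodicity of $s\mapsto \sum_k c_k e^{2\pi i k R s}$... hmm, this only periodizes cleanly when $R\in\Z$, so take $R\to\infty$ through integers. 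Then by the (vector-valued) Riemann–Lebesgue / equidistribution type averaging (Lemma \ref{Lemma passage limite} is the $\T$-analogue of exactly this kind of fact), $\frac{1}{R}\norm{g_R}_{L^p(\R,X)}^p \to \left(\int_\R |\phi|^p\right)\cdot \norm{f}_{L^p(\T,X)}^p$, and similarly for $M_\psi g_R$ versus $M_{\psi|\Z}f$. Dividing and taking the limit yields $\norm{M_{\psi|\Z}f}_{L^p(\T,X)}\le \norm{M_\psi}\,\norm{f}_{L^p(\T,X)}$, and density of trigonometric polynomials finishes it.

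**Main obstacle.** The delicate point is controlling the error term $M_\psi g_R - \phi(\cdot/R)M_{\psi|\Z}f$ and showing it is $o(R^{1/p})$ in $L^p(\R,X)$ as $R\to\infty$: one must exploit that $\psi$ is \emph{continuous} (uniformly continuous on compacts, which suffices since only the finitely many frequencies $k=-N,\dots,N$ appear) so that replacing $\psi$ by its value $\psi(k)$ on the little bump around each integer $k$ costs an error tending to $0$; this requires a careful estimate of $\widehat{\phi(\cdot/R)}*(\text{Dirac comb restricted to }[-N,N])$ and a vector-valued dominated-convergence argument. The other routine-but-necessary ingredients are: (i) the dilation invariance of the multiplier norm on $L^p(\R,X)$ (immediate, since $M_\psi$ commutes with translations and dilations intertwine $M_\psi$ with $M_{\psi(\lambda\cdot)}$ isometrically on $L^p(\R,X)$); (ii) the vector-valued equidistribution estimate for periodic profiles, parallel to Lemma \ref{Lemma passage limite}; (iii) density of $X$-valued trigonometric polynomials in $L^p(\T,X)$. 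None of these is hard, so the write-up is, as the authors say, "a similar proof" to the scalar/operator-space cases — the only place one genuinely uses structure of $X$ is nowhere, which is why the argument goes through verbatim; that is precisely why I would feel comfortable asserting it "can be proved as in \cite{CoW}" rather than redoing every estimate.
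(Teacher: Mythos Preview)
Your proposal is correct and follows exactly the classical de Leeuw periodization/transference argument, which is precisely what the paper invokes by writing ``One can prove this theorem as \cite[Theorem 3.4]{CoW}'' in lieu of a proof. The paper gives no further details, so there is nothing to compare beyond the fact that your Route~2 is the standard concrete realization of the Coifman--Weiss transference principle that the reference points to.
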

Moreover, we need the next result of Jodeit \cite[Theorem 3.5]{Jod}.
We introduce the function $\Lambda\colon \mathbb{R} \xra{}
\mathbb{R}$ defined by
\begin{equation*}
\Lambda(x)=\bigg\{\begin{array}{cl}
        1-|x|  &  {\rm if}\quad  x \in [-1,1]\\
         0     &  {\rm if}\quad  |x|>1.
       \end{array}
\end{equation*}
\begin{thm}
\label{Jodeit} Suppose $1<p<\infty$. Let $\varphi$ be a complex
function defined on $\Z$ such that $M_{\varphi}$ is a bounded
Fourier multiplier on $L^p(\T)$. Then the complex function
$\psi\colon \mathbb{R} \xra{} \mathbb{C}$ defined by
\begin{equation}\label{def psi}
\psi(x)=\sum_{k\in\mathbb{Z}} \varphi(k)\Lambda(x-k),\qquad x\in\R,
\end{equation}
defines a bounded Fourier multiplier $M_{\psi}$ on
$L^p(\mathbb{R})$.
\end{thm}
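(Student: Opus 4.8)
The plan is to compute the convolution kernel of $M_\psi$ on $L^p(\R)$ explicitly, to recognise it as the periodised kernel of $M_\varphi$ multiplied by a fixed probability weight, and then to run a de~Leeuw--Coifman--Weiss type transference argument. As a preliminary reduction I would first assume that $\varphi$ has \emph{finite} support. Replacing $\varphi$ by $\varphi_N(k)=\varphi(k)\bigl(1-\tfrac{|k|}{N+1}\bigr)_{+}$ amounts to composing $M_\varphi$ on $L^p(\T)$ with convolution by the Fej\'er kernel $F_N$, so that $\norm{M_{\varphi_N}}_{M_p(\T)}\leq\norm{F_N}_{L^1(\T)}\,\norm{M_\varphi}_{M_p(\T)}=\norm{M_\varphi}_{M_p(\T)}$; since $\varphi_N\to\varphi$ pointwise, $\sup_N\norm{\varphi_N}_\infty<\infty$, the corresponding interpolants $\psi_N$ converge pointwise to $\psi$ with uniformly bounded sup norm, and since Fourier multiplier norms on $L^p(\T)$ and on $L^p(\R)$ are lower semicontinuous under bounded pointwise convergence of symbols, it is enough to establish a uniform bound $\norm{M_\psi}_{M_p(\R)}\leq C_p\,\norm{M_\varphi}_{M_p(\T)}$ when $\varphi$ is finitely supported. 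In that case $\psi$ is continuous with compact support and $\Phi:=\sum_{k\in\Z}\varphi(k)e^{2\pi ik\cdot}$ is a trigonometric polynomial which is precisely the convolution kernel of $M_\varphi$ on $\T$, i.e.\ $M_\varphi h=h*_{\T}\Phi$.

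Next I would compute the kernel of $M_\psi$ on $\R$. From $\Lambda=1_{[-1/2,1/2]}*1_{[-1/2,1/2]}$ one gets $\Lambda^{\vee}=w$ with $w(x)=\bigl(\tfrac{\sin\pi x}{\pi x}\bigr)^{2}$, hence $\bigl(\Lambda(\cdot-k)\bigr)^{\vee}(x)=e^{2\pi ikx}w(x)$, and therefore
\begin{equation*}
M_\psi f=f*_{\R}\bigl(w\cdot\Phi\bigr),\qquad f\in L^p(\R)\cap L^2(\R).
\end{equation*}
The weight $w$ has three decisive properties: $w\geq 0$, $\int_{\R}w=\Lambda(0)=1$, and, by Poisson summation, $\sum_{k\in\Z}w(x+k)=\sum_{n\in\Z}\Lambda(n)e^{2\pi inx}=1$ for every $x\in\R$. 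Thus $w\,\Phi$ is the kernel of $M_\varphi$, periodised against a fixed probability density whose $\Z$-translates form a partition of unity. Periodising the $x$-integral and using $\Phi(x+k)=\Phi(x)$ yields, for sufficiently nice $f$,
\begin{equation*}
M_\psi f(y)=\int_{0}^{1}\Phi(x)\Bigl(\sum_{k\in\Z}w(x+k)\,f(y-x-k)\Bigr)\,dx ,
\end{equation*}
which displays $M_\psi$, pointwise in $y$, as $M_\varphi$ applied in the variable $x\in\T$ to a periodisation of translates of $f$ weighted by the probabilities $w(x+k)$.

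The last step, and the heart of the matter, is to upgrade this identity to the norm estimate $\norm{M_\psi f}_{L^p(\R)}\leq C_p\,\norm{M_\varphi}_{M_p(\T)}\,\norm{f}_{L^p(\R)}$. This is a transference argument in the spirit of de~Leeuw \cite{DeL}: fixing $f\in L^p(\R)$ with compact support and $g\in L^{p'}(\R)$, one rewrites $\la M_\psi f,g\ra$, via the periodisation identity together with the positivity $w\geq 0$ and $\sum_k w(\cdot+k)\equiv 1$, as a pairing on $\T$ of the form $\la M_\varphi a,b\ra$ with $\norm{a}_{L^p(\T)}\leq C_p\norm{f}_{L^p(\R)}$ and $\norm{b}_{L^{p'}(\T)}\leq C_p\norm{g}_{L^{p'}(\R)}$, and concludes by duality. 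An essentially equivalent route uses the converse (sufficiency) part of de~Leeuw's theorem: since $\psi$ is continuous it suffices to bound $\sup_{\varepsilon>0}\norm{(\psi(\varepsilon k))_{k\in\Z}}_{M_p(\T)}$, and for $\varepsilon=1/M$ the sampled symbol $\psi(k/M)$ is the piecewise-linear interpolant of $\varphi$ on the grid $\tfrac1M\Z$, which one checks equals $\tfrac1M\sum_{m=0}^{M-1}E_{-m}B_M E_m$, where $E_m$ is modulation by $m$ (an isometry of $L^p(\T)$) and $B_M$ is the ``block-constant'' multiplier $k\mapsto\varphi(\lfloor k/M\rfloor)$; so the claim reduces to the uniform bound $\norm{B_M}_{M_p(\T)}\leq C_p\,\norm{M_\varphi}_{M_p(\T)}$.

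The hard part will be precisely this transference step in the range $p\neq 2$: no orthogonality is available, the weight $w$ decays only like $x^{-2}$ and is not compactly supported, and in the de~Leeuw-converse formulation the block-constant estimate is a genuine harmonic-analysis fact whose proof rests on M.\ Riesz's theorem, namely the uniform $L^p(\T)$-boundedness of the Riesz projection and of the partial-sum operators, which is needed to absorb the jumps created when passing from $\T$ to $\R$. Everything else --- the reduction to finite support, the kernel computation, and the bookkeeping inside the transference --- is routine. This is, in substance, the argument of Jodeit \cite{Jod}.
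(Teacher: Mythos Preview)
The paper does not prove this statement at all: it is quoted as a black box from Jodeit \cite[Theorem 3.5]{Jod} and then applied in the proof of Proposition \ref{Th incond R and Z}. So there is no ``paper's own proof'' to compare against.

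That said, your sketch is a faithful outline of Jodeit's original argument. The reduction to finitely supported $\varphi$ via Fej\'er means, the kernel identity $M_\psi f=f*_{\R}(w\Phi)$ with $w=\mathrm{sinc}^2$, and the three properties $w\geq 0$, $\int_{\R}w=1$, $\sum_{k}w(\cdot+k)\equiv 1$ are all correct and are exactly the ingredients Jodeit isolates. Your second route, sampling at $\varepsilon=1/M$ and writing the piecewise-linear interpolant as the average $\tfrac1M\sum_{m=0}^{M-1}E_{-m}B_ME_m$ of conjugated block-constant multipliers, is also correct (the identity checks out termwise), and you rightly locate the only nontrivial input: the uniform $M_p(\T)$-bound for the block-constant multiplier $k\mapsto\varphi(\lfloor k/M\rfloor)$, which ultimately rests on the $L^p$-boundedness of the Riesz projection. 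One small caution: in the first route, the passage from the periodisation identity to the duality estimate $\langle M_\psi f,g\rangle=\langle M_\varphi a,b\rangle$ with controlled $\norm{a}_{L^p(\T)}$, $\norm{b}_{L^{p'}(\T)}$ is not a one-liner --- Jodeit in fact argues via an auxiliary compactly supported bump (rather than $w$ directly) and a two-step iteration, precisely because the slow $x^{-2}$ decay of $w$ makes the naive $L^p$ bookkeeping awkward --- so if you carry this out in full you will likely find the block-constant formulation cleaner.
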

Now, we can prove the next Proposition.
\begin{prop}
\label{Th incond R and Z} Let $X$ be a Banach space. Suppose that
$G=\R$ or $G=\Z$. We have a canonical isomorphism $M_2(G,X)= M_2(G)$
if and only if the space $X$ is isomorphic to a Hilbert space.
\end{prop}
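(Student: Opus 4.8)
The plan is to reduce, in both cases $G=\R$ and $G=\Z$, to the case of $\T$ already settled in Proposition \ref{Prop incond T}; that is, to prove that $M_2(G,X)=M_2(G)$ forces $M_2(\T,X)=M_2(\T)$. One implication of the asserted equivalence is elementary and holds for every $G$: if $X$ is isomorphic to a Hilbert space then $L^2(G,X)$ is, up to isomorphism, the Hilbertian tensor product $L^2(G)\ot_2 X$, on which $T\ot Id_X$ is bounded for every bounded $T$ on $L^2(G)$, whence $M_2(G,X)=M_2(G)$. For the converse we first observe that, by the closed graph theorem, the hypothesis $M_2(G,X)=M_2(G)$ automatically produces a constant $C$ with $\norm{M_\varphi\ot Id_X}_{L^2(G,X)\to L^2(G,X)}\leq C\,\norm{M_\varphi}_{L^2(G)\to L^2(G)}$ for every symbol, and that $M_2(G)=L^\infty(\widehat{G})$ isometrically by Plancherel.

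For $G=\R$ the argument is short. Given $\varphi\colon\Z\to\C$ in $M_2(\T)$, Jodeit's Theorem \ref{Jodeit} produces the continuous function $\psi$ of (\ref{def psi}), which lies in $M_2(\R)$ and hence, by hypothesis, in $M_2(\R,X)$. Since $\psi$ is continuous, the vectorial de Leeuw restriction Theorem \ref{deleuw} shows that $\psi|\Z$ defines a bounded Fourier multiplier on $L^2(\T,X)$; and $\psi|\Z=\varphi$ because $\Lambda(0)=1$ while $\Lambda$ vanishes on $\Z\setminus\{0\}$. Thus $\varphi\in M_2(\T,X)$, so $M_2(\T,X)=M_2(\T)$, and Proposition \ref{Prop incond T} yields that $X$ is isomorphic to a Hilbert space.

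For $G=\Z$ the bridge to $\T$ is the delicate part, and I would route it through $\R$. For every integer $N\geq 1$ the group $\frac1N\Z$ is topologically isomorphic to $\Z$, so $M_2(\frac1N\Z,X)=M_2(\frac1N\Z)$ with the same constant $C$ as above; since $(\frac1N\Z)^{\perp}=N\Z$ and $\widehat{\frac1N\Z}=\widehat{\R}/N\Z$, Theorem \ref{Th transfer subgroups} applied to the closed subgroup $\frac1N\Z$ of $\R$ then shows that every $N$-periodic function $\psi\in L^\infty(\R)$ defines a Fourier multiplier on $L^2(\R,X)$ with $\norm{M_\psi}_{L^2(\R,X)\to L^2(\R,X)}\leq C\,\norm{\psi}_\infty$, a bound independent of $N$. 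Now fix $\varphi\in M_2(\T)=\ell^\infty(\Z)$ and a trigonometric polynomial $f$ in $L^2(\T,X)$ of degree at most $K$; choose $N>K$, let $\varphi_N$ be the $2N$-periodic sequence on $\Z$ coinciding with $\varphi$ on the window $\{-N+1,\dots,N\}\supseteq\{-K,\dots,K\}$, and put $\psi_N(x)=\sum_{k\in\Z}\varphi_N(k)\Lambda(x-k)$. Then $\psi_N$ is continuous, $2N$-periodic, satisfies $\norm{\psi_N}_\infty\leq\norm{\varphi}_\infty$ and $\psi_N|\Z=\varphi_N$; hence $\psi_N\in M_2(\R,X)$ with norm $\leq C\norm{\varphi}_\infty$ and, by Theorem \ref{deleuw}, $\varphi_N\in M_2(\T,X)$ with norm $\leq C\norm{\varphi}_\infty$. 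Since $\varphi_N$ and $\varphi$ agree on $\{-K,\dots,K\}$ and $\deg f\leq K$, we get $M_{\varphi_N}f=M_\varphi f$, so $\norm{M_\varphi f}_{L^2(\T,X)}\leq C\norm{\varphi}_\infty\norm{f}_{L^2(\T,X)}$; trigonometric polynomials being dense in $L^2(\T,X)$, this inequality extends to all of $L^2(\T,X)$, so $\varphi\in M_2(\T,X)$. Hence $M_2(\T,X)=M_2(\T)$ and Proposition \ref{Prop incond T} again concludes.

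The main obstacle is precisely this last case: there is no direct transference between Fourier multipliers on $L^2(\Z)$ — whose symbols live on the compact dual $\widehat{\Z}=\T$ — and Fourier multipliers on $L^2(\T)$; the quotient map $\R\to\R/\Z=\T$ is not usable through Proposition \ref{Prop transfer quotient} since $\Z$ is not compact, and a symbol on $\widehat{\Z}$ extended by zero inside a larger dual group is trivial. The two delicate points in the detour above are the uniformity in $N$ of the transference constant (which rests on $\frac1N\Z\cong\Z$) and the elementary remark that a $2N$-periodic symbol already governs the action of $M_\varphi$ on trigonometric polynomials of degree $<N$, which is exactly what lets a density argument close the gap.
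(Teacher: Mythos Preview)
Your argument is correct. For $G=\R$ it is essentially the paper's proof, only phrased directly rather than by contraposition: both combine Jodeit's extension with the vectorial de Leeuw restriction.

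For $G=\Z$, however, you take a genuinely different route. The paper proceeds by discretization: it approximates the bad multiplier $M_\psi$ on $L^2(\R)$ by a convolution $C_b$ with $b$ continuous and compactly supported (Theorem~\ref{Th Larsen}), then sandwiches $C_b$ between the conditional expectation $\mathbb{E}_n$ onto step functions of step $1/n$ and an isometry $J_n\colon\ell^2_\Z\to\mathbb{E}_n(L^2(\R))$, obtaining convolution operators $C_{a_n}$ on $\ell^2_\Z$ whose scalar norms stay $\leq 1$ while their $X$-valued norms blow up. You instead exploit the chain of closed subgroups $\frac1N\Z\subset\R$: the dilation $\frac1N\Z\cong\Z$ transports the hypothesis with the \emph{same} constant $C$, Theorem~\ref{Th transfer subgroups} lifts this isometrically to $N$-periodic symbols on $\R$, and then de Leeuw together with a periodization/density trick on trigonometric polynomials closes the loop back to $\T$. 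Your route is cleaner and avoids the conditional-expectation machinery; it does, however, use the norm inequality in the vectorial de Leeuw theorem (not merely boundedness), which is standard but stronger than the bare statement of Theorem~\ref{deleuw}. The paper's discretization, while more hands-on, pays off later: the identical mechanism (with $\T$ in place of $\R$) is reused to manufacture multipliers on the finite cyclic groups $\Z/n\Z$ in Case~2 of Proposition~\ref{Th incond discrete}.
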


\begin{proof}
Suppose that $X$ is not isomorphic to a Hilbert space. By
Proposition \ref{Prop incond T}, there exists a bounded Fourier
multiplier $M_{\varphi}\colon L^2(\mathbb{T}) \xra{}
L^2(\mathbb{T})$ such that $M_{\varphi}\ot Id_{X}$ is not bounded on
$L^2(\mathbb{T},X)$. Now, consider the function $\psi$ given by
(\ref{def psi}). By Theorem \ref{Jodeit}, this function defines a
bounded Fourier multiplier $M_{\psi}\colon L^2(\mathbb{R}) \xra{}
L^2(\mathbb{R})$. Now, suppose that the map $M_{\psi}\ot
Id_{X}\colon L^2(\mathbb{R},X) \xra{} L^2(\mathbb{R},X)$ is bounded.
Since the function $\psi\colon \mathbb{R}\xra{}\mathbb{C}$ is
continuous, by Theorem \ref{deleuw}, we deduce that the restriction
$\psi|\Z$ defines a bounded Fourier multiplier $M_{\psi|\Z}$ on
$L^2(\mathbb{T},X)$. Moreover, we observe that, for any $k \in
\mathbb{Z}$, we have $ \psi(k)=\varphi(k)$. Then we deduce that the
Fourier multiplier $M_{\varphi}$ is bounded on $L^2(\mathbb{T},X)$.
We obtain a contradiction. Consequently, the Fourier multiplier
$M_{\psi}$ is bounded on $L^2(\mathbb{R})$ and $M_{\psi}\ot Id_{X}$
is not bounded on $L^2(\mathbb{R},X)$. Hence, the case $G=\R$ is
completed.

We can suppose that the above multiplier $M_{\psi}$ satisfies
$\bnorm{M_{\psi}}_{L^2(\mathbb{R}) \xra{}L^2(\mathbb{R})}=1$. By
Theorem \ref{Th Larsen}, there exists a net of continuous functions
$(b_i)_{i \in I}$ with compact support such that
$$
\bnorm{C_{b_{i}}}_{L^2(\mathbb{R}) \xra{} L^2(\mathbb{R})}\leq 1
 \ \ \ \text{and} \ \ \ C_{b_{i}} \xra[i]{so}
M_{\psi}.
$$
Let $C>1$. Then, it is not difficult to deduce that there exists a
continuous function $b\colon \mathbb{R} \xra{}\mathbb{C}$ with
compact support such that $\norm{C_b}_{L^2(\mathbb{R}) \xra{}
L^2(\mathbb{R})} \leq 1$ and $\bnorm{C_b \ot
Id_{X}}_{L^2(\mathbb{R},X) \xra{} L^2(\mathbb{R},X)} \geq 2C$.
%$$
%\norm{C_b}_{L^2(\mathbb{R}) \xra{} L^2(\mathbb{R})} \leq 1 \ \ \ \ \
%\text{and} \ \ \ \ \ \bnorm{C_b \ot Id_{X}}_{L^2(\mathbb{R},X)
%\xra{} L^2(\mathbb{R},X)} \geq 2c.
%$$
Now, we define the sequence $\big(a_n\big)_{n\geq 1}$ of complex
sequences indexed by $\mathbb{Z}$ by, if $n\geq 1$ and $k \in
\mathbb{Z}$
\begin{equation}\label{ank}
    a_{n,k}=\int_{0}^{1}\int_{0}^{1}\frac{1}{n}b\bigg(\frac{t-s+k}{n}\bigg)dsdt.%=\int_{-\frac{1}{n}}^{\frac{1}{n}}b\bigg(\frac{k}{n}+s\bigg)\big(1-n|s|\big)ds
\end{equation}
For any integer $n\geq 1$, we introduce the conditional expectation
$\mathbb{E}_n\colon L^2(\mathbb{R}) \xra{} L^2(\mathbb{R})$ with
respect to the $\sigma$-algebra generated by the
$\Big[\frac{k}{n},\frac{k+1}{n}\Big[$, $k \in \mathbb{Z}$. For any
integer $n\geq 1$ and any $f \in L^2(\mathbb{R})$, we have
\begin{equation*}
\mathbb{E}_nf=n \sum_{k \in
\mathbb{Z}}^{}\Bigg(\int_{\frac{k}{n}}^{\frac{k+1}{n}} f(t)dt\Bigg)
1_{\big[\frac{k}{n},\frac{k+1}{n}\big[}
\end{equation*}
(see \cite[page 227]{AbA}). Now, we define the linear map $J_n\colon
\ell^2_\mathbb{Z} \xra{} \mathbb{E}_n\big(L^2(\mathbb{R})\big)$ by,
if $u \in \ell^2_\mathbb{Z}$
\begin{equation*}
J_n(u)=n^{\frac{1}{2}}\sum_{k\in \mathbb{Z}} u_k
1_{\big[\frac{k}{n},\frac{k+1}{n}\big[}.
\end{equation*}
It is easy to check that the map $J_n$ is an isometry of
$\ell^2_\mathbb{Z}$ onto the range
$\mathbb{E}_n\big(L^2(\mathbb{R})\big)$ of $\mathbb{E}_n$. For any
$u\in \ell^2_\mathbb{Z}$, mimicking the computation presented in the
proof of \cite[Theorem 3.5]{Arh1}, we obtain that
\begin{equation*}
\mathbb{E}_nC_bJ_n(u)=J_nC_{a_n}(u).
\end{equation*}
%Then, for any integer $n\geq
%1$, since
%$\norm{\mathbb{E}_n}_{L^2(\mathbb{R})\xra{}L^2(\mathbb{R})}\leq 1$,
%we have the following estimate
%$$
%\bnorm{C_{a_n}}_{\ell^2_\mathbb{Z} \xra{}\ell^2_\mathbb{Z}} \leq
%\norm{C_b}_{L^2(\mathbb{R}) \xra{} L^2(\mathbb{R})}\leq 1.
%$$
%Moreover, we have $\mathbb{E}_n \ot Id_{X}\xra[n \to
%+\infty]{so}Id_{L^2(\mathbb{R},X)}$. It is easy to see that
%$(\mathbb{E}_nC_b\mathbb{E}_n) \ot Id_{X} \xra[n \to +\infty]{so}
%C_b \ot Id_{X}.$ By the strong semicontinuity of the norm, we obtain
%that
%$$
%\bnorm{C_b \ot Id_{X}}_{L^2(\mathbb{R},X) \xra{} L^2(\mathbb{R},X)}
%\leq \liminf_{n \to \infty} \bnorm{\mathbb{E}_nC_b\mathbb{E}_n \ot
%Id_{X}}_{L^2(\mathbb{R},X) \xra{}L^2(\mathbb{R},X)}.
%$$
Then, it is easy to prove that there exists an integer $n\geq 1$
such that $ \bnorm{C_{a_n}}_{\ell^2_\mathbb{Z}
\xra{}\ell^2_\mathbb{Z}} \leq 1 $ and $\bnorm{C_{a_n}\ot
Id_{X}}_{\ell^2_\mathbb{Z}(X) \xra{}\ell^2_\mathbb{Z}(X)} \geq C$.
Finally, we conclude the case $G=\Z$ with the closed graph theorem.
\end{proof}

Now, we pass to discrete groups. We first prove the following result
with a method similar to that of Proposition \ref{Prop incond T}.
\begin{prop}
\label{Prop incond oplus} Let $X$ be a Banach space. Let $q\geq 2$
be an integer. We have a canonical isomorphism
$M_2(\oplus_{1}^\infty \mathbb{Z}/q\mathbb{Z},X) =
M_2(\oplus_{1}^\infty \mathbb{Z}/q\mathbb{Z})$ if and only if the
Banach space $X$ is isomorphic to a Hilbert space.
\end{prop}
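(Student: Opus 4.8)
I would prove the implication from left to right, the converse being straightforward (if $X\simeq H$ is isomorphic to a Hilbert space then for every bounded operator $T$ on $L^2(G)$ the operator $T\ot Id_X$ is bounded on $L^2(G,X)$ with $\bnorm{T\ot Id_X}\lesssim\bnorm{T}$, so $M_2(G,X)=M_2(G)$). Throughout write $G=\oplus_1^\infty\Z/q\Z$; this is an infinite \emph{discrete} abelian group whose dual is the \emph{compact} group $\widehat{G}=\Pi_1^\infty\Z/q\Z$. The assumption that $M_2(G,X)=M_2(G)$ is a canonical isomorphism of Banach spaces, hence provides a constant $C$ with $\bnorm{M_\varphi\ot Id_X}_{L^2(G,X)\to L^2(G,X)}\leq C\norm{\varphi}_{L^\infty(\widehat{G})}$ for every symbol $\varphi\in L^\infty(\widehat{G})$. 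Following Proposition \ref{Prop incond T}, the plan has two parts: first show that $X$ has property $(\alpha)$, then exploit the $C^*$-algebra $L^\infty(\widehat{G})$ via Theorem \ref{bounded homo is R bounded} and Lemma \ref{Lemma tau imply Hilbert}.

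For the first part I would descend to finite subgroups. For each $n$ the finite group $\Omega_q^{2n}=(\Z/q\Z)^{2n}$ is a closed subgroup of $G$, so Theorem \ref{Th transfer subgroups} yields an isometry $M_2(\Omega_q^{2n},X)\hookrightarrow M_2(G,X)$ compatible with the scalar multiplier norms; since $\widehat{\Omega_q^{2n}}$ is finite every symbol on it already defines a bounded operator on $L^2(\Omega_q^{2n},X)$, and comparing the two pictures through this isometry and the hypothesis gives $\bnorm{M_\psi\ot Id_X}_{L^2(\Omega_q^{2n},X)}\leq C\norm{\psi}_\infty$, with the \emph{same} $C$ for all $n$. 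Now fix $n$, scalars $t_{ij}$ with $|t_{ij}|\leq1$ and vectors $x_{ij}\in X$ ($1\leq i,j\leq n$). Writing $\Omega_q^{2n}=\Omega_q^n\times\Omega_q^n$, the characters $\varepsilon_{1,q},\dots,\varepsilon_{n,q}$ sit on the first factor and $\varepsilon_{n+1,q},\dots,\varepsilon_{2n,q}$ on the second; let $\varphi$ be the symbol on $\widehat{\Omega_q^{2n}}$ equal to $t_{ij}$ at the character $\varepsilon_{i,q}\varepsilon_{n+j,q}$ and $0$ elsewhere, so that $\norm{\varphi}_\infty=\sup_{i,j}|t_{ij}|$ and $M_\varphi\ot Id_X$ carries $\sum_{i,j}\varepsilon_{i,q}\varepsilon_{n+j,q}\ot x_{ij}$ to $\sum_{i,j}t_{ij}\varepsilon_{i,q}\varepsilon_{n+j,q}\ot x_{ij}$. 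The estimate above then gives
\begin{equation*}
\Bgnorm{\sum_{i,j}t_{ij}\,\varepsilon_{i,q}\varepsilon_{n+j,q}\ot x_{ij}}_{L^2(\Omega_q^{2n},X)}\leq C\,\sup_{i,j}|t_{ij}|\,\Bgnorm{\sum_{i,j}\varepsilon_{i,q}\varepsilon_{n+j,q}\ot x_{ij}}_{L^2(\Omega_q^{2n},X)}.
\end{equation*}

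Since the averages over $\Omega_q^n$ coincide with those over $\Pi_1^\infty\Z/q\Z$ and $(\varepsilon_{i,q})_{i\geq1}$ is a Sidon set (Lemma \ref{Lemma epsq is a Sidon set}), two applications of Theorem \ref{Th transfer Sidon}, one for each factor of $\Omega_q^n\times\Omega_q^n$ and using $L^2(\Omega_q^{2n},X)=L^2(\Omega_q^n,L^2(\Omega_q^n,X))$, give $\bnorm{\sum_{i,j}y_{ij}\varepsilon_{i,q}\varepsilon_{n+j,q}}_{L^2(\Omega_q^{2n},X)}\approx\bnorm{\sum_{i,j}\varepsilon_i\ot\varepsilon_j\ot y_{ij}}_{\Rad(\Rad(X))}$ with constants not depending on $n$; substituting this into the displayed inequality and using homogeneity in the $t_{ij}$, one concludes that $X$ has property $(\alpha)$. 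For the second part: since $X$ has property $(\alpha)$ and $G$ is countable, $L^2(G,X)$ has property $(\alpha)$; the commutative unital $C^*$-algebra $L^\infty(\widehat{G})$ is isometrically $C(K)$ for some compact space $K$ by Gelfand's theorem, and $\varphi\mapsto M_\varphi\ot Id_X$ is a unital homomorphism $C(K)\to B(L^2(G,X))$ of norm $\leq C$, hence $R$-bounded by Theorem \ref{bounded homo is R bounded}. Thus $\{M_\varphi\ot Id_X:\norm{\varphi}_\infty\leq1\}$ is $R$-bounded in $B(L^2(G,X))$; each translation $\tau_t$ ($t\in G$) is a Fourier multiplier with symbol of modulus one, so $\{\tau_t\ot Id_X:t\in G\}$ is $R$-bounded, and as $G$ is infinite Lemma \ref{Lemma tau imply Hilbert} shows that $X$ is isomorphic to a Hilbert space.

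The delicate point is the descent to the finite subgroups: one must verify that Theorem \ref{Th transfer subgroups} transports the single constant $C$ of the hypothesis down to every $\Omega_q^{2n}$. This passage through finite subgroups seems unavoidable, for $\widehat{G}=\Pi_1^\infty\Z/q\Z$ is not discrete, and a symbol on $\widehat{G}$ built by hand from the $t_{ij}$ would not have a controlled $L^\infty$-norm, whereas on a finite subgroup the dual is discrete and this norm equals exactly $\sup_{i,j}|t_{ij}|$.
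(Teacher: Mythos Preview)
Your proof is correct and follows essentially the same route as the paper's: descend via Theorem \ref{Th transfer subgroups} to the finite subgroups $\Omega_q^n\times\Omega_q^n$ of $G$, use the Sidon property of $(\varepsilon_{i,q})_{i\geq1}$ together with Theorem \ref{Th transfer Sidon} to compare the $\varepsilon_{i,q}\ot\varepsilon_{j,q}$ averages with $\Rad(\Rad(X))$ and thereby obtain property $(\alpha)$, and then finish exactly as in Proposition \ref{Prop incond T} via Gelfand, Theorem \ref{bounded homo is R bounded}, and Lemma \ref{Lemma tau imply Hilbert}. The only differences are cosmetic: you write the subgroup as $\Omega_q^{2n}$ and the characters as $\varepsilon_{i,q}\varepsilon_{n+j,q}$ where the paper writes $\Omega_q^n\times\Omega_q^n$ and $\varepsilon_{i,q}\ot\varepsilon_{j,q}$, and you spell out explicitly what the paper compresses into ``the end of the proof is similar to the end of the proof of Proposition \ref{Prop incond T}''. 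One small remark on your closing comment: the descent through Theorem \ref{Th transfer subgroups} does in fact produce a bona fide symbol $\varphi\circ\pi$ on $\widehat{G}$ with $\norm{\varphi\circ\pi}_{L^\infty(\widehat{G})}=\sup_{i,j}|t_{ij}|$ (it is constant on cosets of $H^\perp$), so the obstacle is not that the $L^\infty$-norm blows up but rather that Theorem \ref{Th transfer subgroups} is precisely the clean way to manufacture such a symbol and simultaneously relate the multiplier actions on $L^2(H,X)$ and $L^2(G,X)$.
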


\begin{proof}
Assume that $M_2(\oplus_{1}^\infty \mathbb{Z}/q\mathbb{Z},X) =
M_2(\oplus_{1}^\infty \mathbb{Z}/q\mathbb{Z})$. Then there exists a
positive constant $C$ such that for any $\varphi\in
L^\infty(\Pi_{1}^{\infty}\mathbb{Z}/q\mathbb{Z})$
\begin{equation*}
\bnorm{M_{\varphi}}_{L^2(\oplus_{1}^\infty
\mathbb{Z}/q\mathbb{Z},X)\to L^2(\oplus_{1}^\infty
\mathbb{Z}/q\mathbb{Z},X)} \leq C \norm{\varphi}_{
L^\infty(\Pi_{1}^{\infty}\mathbb{Z}/q\mathbb{Z})}.
\end{equation*}
Moreover, if $n$ is an integer, note that $\Omega_{q}^n \times
\Omega_{q}^n$ is a closed subgroup of $\oplus_{1}^\infty
\mathbb{Z}/q\mathbb{Z}$. For any integer $n\geq 1$, any complex
numbers $t_{ij}\in \C$ and any $x_{ij}\in X$, we deduce that
\begin{equation*}
\Bgnorm{\sum_{i,j=1}^n t_{ij}\varepsilon_{i,q} \ot \varepsilon_{j,q}
\ot x_{ij}}_{L^2(\Omega_{q}^n \times \Omega_{q}^n,X)}\leq C
\sup_{1\leq i,j\leq n}|t_{ij}|\Bgnorm{\sum_{i,j=1}^n
\varepsilon_{i,q} \ot \varepsilon_{j,q} \ot
x_{ij}}_{L^2(\Omega_{q}^n \times \Omega_{q}^n,X)}.
\end{equation*}
Now, by Theorem \ref{Th transfer Sidon} and Lemma \ref{Lemma epsq is
a Sidon set}, we have the equivalence
\begin{equation*}
\Bgnorm{\sum_{i,j=1}^n \varepsilon_i \ot \varepsilon_{j}\ot
x_{ij}}_{\Rad(\Rad(X))} \approx\ \Bgnorm{\sum_{i,j=1}^n
\varepsilon_{i,q} \ot \varepsilon_{j,q} \ot
x_{ij}}_{L^2(\Omega_{q}^n \times \Omega_{q}^n,X)},\qquad
n\in\mathbb{N}, x_{ij}\in X.
\end{equation*}
We deduce that the Banach space $X$ has property $(\alpha)$. The end
of the proof is similar to the end of the proof of Proposition
\ref{Prop incond T}.
\end{proof}

\begin{prop}
\label{Th incond discrete} Let $G$ be an infinite discrete abelian
group and $X$ a Banach space. We have a canonical isomorphism
$M_2(G,X)=M_2(G)$ if and only if the space $X$ is isomorphic to a
Hilbert space.
\end{prop}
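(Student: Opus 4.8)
The plan is the following. The ``if'' implication is routine: if $X$ is isomorphic to a Hilbert space $H$, then $L^2(G,X)$ is isomorphic to the Hilbertian tensor product $L^2(G)\ot_2 H$, and under this isomorphism $M_\varphi\ot Id_X$ corresponds to $M_\varphi\ot Id_H$, which is bounded precisely when $M_\varphi$ is bounded on $L^2(G)$, with comparable norms; hence $M_2(G,X)=M_2(G)$. Conversely, assume $M_2(G,X)=M_2(G)$; by the closed graph theorem there is a constant $C$ with $\norm{M_\varphi}_{M_2(G,X)}\leq C\norm{M_\varphi}_{M_2(G)}$ for every Fourier multiplier. The first step is a descent to subgroups: if $H$ is a subgroup of $G$ (automatically closed, since $G$ is discrete) and $\pi\colon\widehat{G}\to\widehat{G}/H^{\perp}$ is the canonical map, then Theorem \ref{Th transfer subgroups}, applied once to $X$ and once to the scalars, furnishes isometric embeddings $M_2(H,X)\hookrightarrow M_2(G,X)$ and $M_2(H)\hookrightarrow M_2(G)$, both of the form $M_\varphi\mapsto M_{\varphi\circ\pi}$. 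Combining these with the displayed inequality gives $\norm{M_\varphi}_{M_2(H,X)}\leq C\norm{M_\varphi}_{M_2(H)}$, that is $M_2(H,X)=M_2(H)$ with the \emph{same} constant $C$, for every subgroup $H$ of $G$.

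Next I would argue according to the structure of $G$. If $G$ is of bounded order, then by Theorem \ref{Th struct group bounded} and the remark following it $G$ contains a copy of $\oplus_{1}^{\infty}\Z/q\Z$ for some prime $q$; the descent step and Proposition \ref{Prop incond oplus} then show that $X$ is isomorphic to a Hilbert space. If $G$ is not of bounded order but contains an element of infinite order, then $\Z$ embeds in $G$, and the descent step together with Proposition \ref{Th incond R and Z} concludes. There remains the case where $G$ is an infinite torsion group of unbounded exponent: then $G$ contains cyclic subgroups isomorphic to $\Z/m_n\Z$ with $m_n\to\infty$. For such a finite group every function on its dual is a Fourier multiplier, with $\norm{M_\varphi}_{M_2(\Z/m_n\Z)}=\norm{\varphi}_\infty$, so the descent step gives $\norm{M_\varphi}_{M_2(\Z/m_n\Z,X)}\leq C\norm{\varphi}_\infty$ for all $\varphi$, \emph{uniformly in} $n$.

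The final step is to transfer this uniform bound to the circle group. It is enough to show, for each symbol $\varphi\in\ell^\infty(\Z)=M_2(\T)$ and each trigonometric polynomial $f=\sum_{|k|\leq N}e^{2\pi\sqrt{-1}k\cdot}\ot x_k$ in $L^2(\T,X)$, that $\norm{M_\varphi f}_{L^2(\T,X)}\leq C\norm{\varphi}_\infty\norm{f}_{L^2(\T,X)}$. Choosing $n$ with $m_n>2N+1$ and identifying $\{-N,\ldots,N\}$ with a subset of $\widehat{\Z/m_n\Z}$, the point is that sampling $f$, and also $M_\varphi f$, at the $m_n$-th roots of unity yields elements of $L^2(\Z/m_n\Z,X)$ whose norms converge as $n\to\infty$ to $\norm{f}_{L^2(\T,X)}$ and $\norm{M_\varphi f}_{L^2(\T,X)}$ respectively (a Riemann-sum/equidistribution argument); applying the uniform bound of the previous paragraph to the truncated symbol on $\Z/m_n\Z$ and letting $n\to\infty$ gives the claim. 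Hence $M_2(\T,X)=M_2(\T)$, and Proposition \ref{Prop incond T} shows that $X$ is isomorphic to a Hilbert space. In every case the conclusion follows.

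I expect the main obstacle to be this last limiting step, a ``de Leeuw-type'' passage from the finite cyclic groups $\Z/m_n\Z$ to $\T$: one must match the finite Fourier system of $\Z/m_n\Z$ with a sampling of the Fourier system of $\T$ and check that the vector-valued $L^2$-norms, and the action of the truncated multipliers, behave well as $m_n\to\infty$. An alternative treatment of the torsion case of unbounded exponent, bypassing $\T$, would be to run the property $(\alpha)$ argument from the proof of Proposition \ref{Prop incond T} --- two lacunary blocks of characters playing the roles of the two Rademacher families --- directly inside a sufficiently large $\Z/m_n\Z$, and then conclude via Theorem \ref{bounded homo is R bounded} and Lemma \ref{Lemma tau imply Hilbert}, as at the end of the proofs of Propositions \ref{Prop incond T} and \ref{Prop incond oplus}; this seems of comparable difficulty.
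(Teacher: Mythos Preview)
Your proof is correct, and in the bounded-order case and the case where $G$ contains an element of infinite order it coincides with the paper's. The difference lies in the remaining case of an infinite torsion group of unbounded exponent.

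There the paper argues in the opposite direction from you. Rather than transferring the uniform bound on $M_2(\Z/m_n\Z,X)$ \emph{up} to $\T$ by a sampling/Riemann-sum argument, the paper starts from Proposition~\ref{Prop incond T} in contrapositive form: assuming $X$ is not Hilbertian, it produces a bounded Fourier multiplier on $L^2(\T)$ whose $X$-valued extension is unbounded, approximates it (Theorem~\ref{Th Larsen}) by a convolution operator $C_b$ with $\norm{C_b}_{L^2(\T)\to L^2(\T)}\leq 1$ but $\norm{C_b\ot Id_X}\geq 2C$, and then discretizes $C_b$ to convolution operators $C_{a_n}$ on $\ell^2_n$ via conditional expectations onto step functions. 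This yields, for all large $n$, contractive multipliers on $\Z/n\Z$ with $\norm{C_{a_n}\ot Id_X}\geq C$; choosing some $n_j\geq N$ and invoking Theorem~\ref{Th transfer subgroups} then contradicts $M_2(G,X)=M_2(G)$.

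Your route is a bit lighter here: it avoids Theorem~\ref{Th Larsen} and the explicit conditional-expectation construction, and the Riemann-sum step you flag as the main obstacle is indeed straightforward (for a fixed trigonometric polynomial $f$, $t\mapsto\norm{f(t)}_X^2$ is continuous on $\T$, so the discrete averages over $m_n$-th roots converge; the constant $C$ is uniform in $n$, so passing to the limit is legitimate). On the other hand, the paper's discretization scheme is reused verbatim in Section~4 to handle the $p\neq 2$ situation (Theorem~\ref{Th noncb locally}), where a direct $p=2$ sampling argument like yours is not available; so the paper's approach, while slightly heavier here, pays for itself later.
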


\begin{proof} Case 1: $G$ is not a torsion group. Then $G$ contains a copy of
$\mathbb{Z}$, the additive group of the integers. Suppose that
$M_2(G,X)=M_2(G)$. By Theorem \ref{Th transfer subgroups}, we have
$M_2(\Z,X)=M_2(\Z)$. By Proposition \ref{Th incond R and Z}, we
deduce that $X$ is isomorphic to a Hilbert space.

Case 2: $G$ is a torsion group, but contains elements of arbitrarily
large order. We may therefore assume that there is a sequence
$G_1,G_2,\ldots$ of cyclic subgroups of $G$ of orders
$n_1,n_2,\ldots$ with $n_j\xra[j \to +\infty]{} +\infty$.

We will construct contractive Fourier multipliers $C_{a_n}$ on the
cyclic group $\Z/n\Z$ with large $\bnorm{C_{a_n}\ot
Id_{X}}_{\ell^2_n(X) \xra{} \ell^2_n(X)}$. We use a similar method
to the one of proof of Proposition \ref{Th incond R and Z}. By
Proposition \ref{Prop incond T}, there exists a bounded Fourier
multiplier $M_{\psi}\colon L^2(\mathbb{T})\xra{} L^2(\mathbb{T})$
such that $M_{\psi}\ot Id_{X}$ is not bounded on
$L^2(\mathbb{T},X)$. By Theorem \ref{Th Larsen}, there exists a net
of continuous functions $(b_i)_{i \in I}$ such that
\begin{equation*}
\bnorm{C_{b_{i}}}_{L^2(\T) \xra{} L^2(\T)}\leq
\bnorm{M_{\psi}}_{L^2(\T) \xra{}L^2(\T)} \ \ \ \text{and} \ \ \
C_{b_{i}} \xra[i]{so} M_{\psi}.
\end{equation*}
Let $C>1$. It is not difficult to deduce that there exists a
continuous function $b\colon \mathbb{T} \xra{}\mathbb{C}$ such that
\begin{equation*}
\norm{C_b}_{L^2(\mathbb{T}) \xra{} L^2(\mathbb{T})} \leq 1 \ \ \ \ \
\text{and} \ \ \ \ \ \norm{C_b \ot Id_{X}}_{L^2(\mathbb{T},X) \xra{}
L^2(\mathbb{T},X)} \geq 2C.
\end{equation*}
Now, we use the identification $L^2(\mathbb{T})=L^2\big([0,1]\big)$.
We consider the function $b$ as a 1-periodic function $b\colon \R\to
\C$. Then, we define the sequence $\big(a_n\big)_{n\geq 1}$ of
complex sequences indexed by $\{0,\ldots,n\}$ by (\ref{ank}), if
$n\geq 1$ and $k \in \{0,\ldots,n\}$. If $n\geq 1$, note that the
map $C_{a_n}$ is a convolution operator on $\ell^2_n$. For any
integer $n\geq 1$, we introduce the conditional expectation
$\mathbb{E}_n\colon L^2\big([0,1]\big) \xra{} L^2\big([0,1]\big)$
with respect to the $\sigma$-algebra generated by the
$\Big[\frac{k}{n},\frac{k+1}{n}\Big[$, $k \in \{0,\ldots,n\}$. For
any integer $n\geq 1$ and any $f \in L^2\big([0,1]\big)$, we have
\begin{equation}\label{def esperance}
\mathbb{E}_nf=n
\sum_{k=0}^{n-1}\Bigg(\int_{\frac{k}{n}}^{\frac{k+1}{n}}
f(t)dt\Bigg) 1_{[\frac{k}{n},\frac{k+1}{n}[}.
\end{equation}
Now, we define the linear map $J_n\colon \ell^2_n\xra{}
\mathbb{E}_n\big( L^2([0,1])\big)$ by, if $u \in \ell^2_n$
\begin{equation*}
J_n(u)=n^{\frac{1}{p}}\sum_{k=0}^{n-1} u_k
1_{[\frac{k}{n},\frac{k+1}{n}[}.
\end{equation*}
It is easy to check that the map $J_n$ is an isometry of $\ell^2_n$
onto the range $\mathbb{E}_n\big(L^2([0,1])\big)$ of $\mathbb{E}_n$.
For any $u\in \ell_p^n$, by a computation similar to the one of the
proof of \cite[Theorem 3.5]{Arh1}, we show that
\begin{equation*}
\mathbb{E}_nC_bJ_n(u)= J_nC_{a_n}(u).
\end{equation*}
Thus, it is not difficult to deduce that there exists an integer
$N\geq 1$ such that for any integer $n\geq N$ we have
\begin{equation*}
\bnorm{C_{a_n}}_{\ell^2_n \xra{}\ell^2_n} \leq 1 \ \ \ \ \
\text{and} \ \ \ \ \ \bnorm{C_{a_n}\ot Id_{X}}_{\ell^2_n(X)
\xra{}\ell^2_n(X)} \geq C.
\end{equation*}

Now, recall that $n_j\xra[j \to +\infty]{} +\infty$. Hence, we
deduce that there exists an integer $j\geq 1$ and a convolution
operator $C_{a}\colon L^2(G_{n_j})\to L^2(G_{n_j})$ such that
$$
\bnorm{C_{a}}_{L^2(G_{n_j}) \xra{} L^2(G_{n_j})} \leq 1 \ \ \ \ \
\text{and} \ \ \ \ \ \bnorm{C_{a} \ot Id_{X}}_{L^2(G_{n_j},X)
\xra{}L^2(G_{n_j},X)} \geq C.
$$
We conclude with Theorem \ref{Th transfer subgroups} and the closed
graph theorem.

Case 3: $G$ is a group of bounded order. In this case, the remark
following Theorem \ref{Th struct group bounded} allows us to claim
that $G$ contains a subgroup isomorphic to the direct sum
$\oplus_{1}^\infty \mathbb{Z}/q\mathbb{Z}$ where $q$ is a prime
integer. We conclude with Theorem \ref{Th transfer subgroups} and
Proposition \ref{Prop incond oplus}.
\end{proof}

Recall a particular case of \cite[Theorem 1]{DJ}. We give an
independent proof of this result.
\begin{prop}
\label{Th incond compact groups} Let $G$ be an infinite compact
abelian group and $X$ be a Banach space. We have a canonical
isomorphism $M_2(G,X)= M_2(G)$ if and only if the space $X$ is
isomorphic to a Hilbert space.
\end{prop}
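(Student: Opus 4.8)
The plan is to reduce the compact case to the discrete case via duality. Let $G$ be an infinite compact abelian group; then its dual $\widehat{G}$ is an infinite \emph{discrete} abelian group. The key point is that Fourier multipliers on $L^2(G)$ correspond, under the Fourier transform $\mathcal{F}\colon L^2(G)\to \ell^2(\widehat{G})$, to Schur-type diagonal operators, and more importantly that the vector-valued multiplier spaces match up: a function $\varphi$ on $\widehat{G}$ defines a bounded multiplier $M_\varphi$ on $L^2(G,X)$ if and only if the ``same'' function defines a bounded multiplier on $L^2(\widehat{G},X)$ (viewing $G=\widehat{\widehat{G}}$). Concretely I would argue that $M_2(G,X)$ and $M_2(\widehat{G},X)$ are canonically isometrically isomorphic, simply because in both settings a Fourier multiplier is determined by a bounded function on the relevant dual group, and the $L^2(G,X)$-boundedness condition, spelled out on trigonometric polynomials $\sum_\gamma \gamma\otimes x_\gamma$ with $\gamma$ ranging over $\widehat{G}$, is visibly symmetric in $G$ and $\widehat{G}$. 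The same remark shows $M_2(G)=M_2(\widehat{G})$.

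With that reduction in hand, the proof is short. One implication is the easy one already noted in the text: if $X$ is isomorphic to a Hilbert space, then $M_2(G,X)=M_2(G)$ canonically. For the converse, suppose $M_2(G,X)=M_2(G)$. By the duality identifications above, $M_2(\widehat{G},X)=M_2(\widehat{G})$, where $\widehat{G}$ is an infinite discrete abelian group. Now apply Proposition \ref{Th incond discrete}: it yields that $X$ is isomorphic to a Hilbert space. That completes the proof.

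An alternative route, if one prefers to avoid invoking the duality isomorphism of multiplier spaces, is to mimic the structure-theoretic case analysis used in Proposition \ref{Th incond discrete}, but transferred through Theorem \ref{Th structure}: an infinite compact abelian group $G$ contains a compact subgroup $K$ with $G/K$ discrete, and $G/K$ being a quotient by a compact subgroup, Proposition \ref{Prop transfer quotient} transfers multipliers from $M_2(G/K,X)$ into $M_2(G,X)$. If $G/K$ is infinite, one finishes via Proposition \ref{Th incond discrete}; if $G/K$ is finite, then $K$ must be infinite compact, and one reruns the argument inside $K$, eventually reaching either a copy of an infinite discrete quotient or, via Theorems \ref{Th struct group bounded} and the Sidon-set machinery (Lemma \ref{Lemma epsq is a Sidon set}, Theorem \ref{Th transfer Sidon}), a situation handled exactly as in Proposition \ref{Prop incond oplus}. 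I expect the main obstacle to be making the duality identification $M_2(G,X)\cong M_2(\widehat{G},X)$ genuinely precise — in particular checking that the correspondence of trigonometric polynomials on $G$ with finitely supported sequences on $\widehat{G}$ is isometric at the $L^2(G,X)$ level (this is just orthonormality of characters, hence a Hilbert-space Pythagorean identity that survives tensoring with $X$ because $L^2(G,X)=L^2(G)\otimes_2 X$ only formally — one must write it out on the dense subspace of polynomials), and that Plancherel's theorem for $\widehat{G}$ matches the two boundedness conditions. Once that bookkeeping is done, the rest is a direct appeal to Proposition \ref{Th incond discrete}.
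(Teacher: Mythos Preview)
Your main approach has a genuine gap. The claimed identification $M_2(G,X)\cong M_2(\widehat{G},X)$ via the Fourier transform is precisely what fails when $X$ is not a Hilbert space. First, Fourier multipliers on $L^2(G)$ are parametrized by functions in $L^\infty(\widehat{G})$, whereas Fourier multipliers on $L^2(\widehat{G})$ are parametrized by functions in $L^\infty(G)$; these are different objects, and there is no ``same function'' to compare. What the Fourier transform actually does is intertwine $M_\varphi$ on $L^2(G)$ with \emph{pointwise} multiplication by $\varphi$ on $\ell^2(\widehat{G})$, not with a Fourier multiplier there. Second, and more fundamentally, the vector-valued Plancherel identity you invoke does not hold: for a finite sum one has
\[
\Bnorm{\sum_{\gamma} \gamma\ot x_\gamma}_{L^2(G,X)}^2=\int_G\Bnorm{\sum_\gamma \gamma(g)\,x_\gamma}_X^2\, d\mu_G(g),
\]
and this is \emph{not} equal to $\sum_\gamma \norm{x_\gamma}_X^2=\bnorm{(x_\gamma)}_{\ell^2(\widehat{G},X)}^2$ unless $X$ is a Hilbert space. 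Orthonormality of characters gives a Pythagorean identity in $L^2(G)$, but tensoring with $X$ destroys it; in fact, that equality holding for all finite families is essentially Kwapie\'n's characterization of Hilbert spaces. So the ``symmetry'' you appeal to is exactly the conclusion you are trying to prove, and the argument is circular.

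Your alternative route is closer in spirit to the paper's proof but, as sketched, also does not work. Applying the structure theorem directly to a compact $G$ only produces a compact open subgroup $K$ with $G/K$ \emph{finite}, so one never reaches an infinite discrete quotient that way, and ``rerunning the argument inside $K$'' simply reproduces the original situation. The paper instead carries out the case analysis on the \emph{dual} group $\widehat{G}$, which is infinite discrete: for a suitable subgroup $H\subset\widehat{G}$ (namely $\Z$, a large cyclic group, or $\oplus_1^\infty\Z/q\Z$, according to the three cases of Proposition~\ref{Th incond discrete}) one has $G/H^\perp\cong\widehat{H}$, and Proposition~\ref{Prop transfer quotient} then embeds $M_2(\widehat{H},X)$ isometrically into $M_2(G,X)$. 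This quotient transfer, indexed by subgroups of $\widehat{G}$ rather than of $G$, is the mechanism your sketch is missing.
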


\begin{proof}
Let $G$ be an infinite compact group. Suppose that
$M_2(G,X)=M_2(G)$.

Case 1: The discrete group $\widehat{G}$ is not a torsion group.
Then $\widehat{G}$ contains a copy of $\mathbb{Z}$, the additive
group of the integers. Note that we have
$G/\mathbb{Z}^{\perp}=\widehat{\mathbb{Z}}=\mathbb{T}$
isomorphically. By Proposition \ref{Prop transfer quotient}, we
deduce that $M_2(\T,X)=M_2(\T)$. By Proposition \ref{Prop incond T},
we deduce that the Banach space $X$ is isomorphic to a Hilbert
space.

Case 2: The group $\widehat{G}$ is a torsion group, but contains
elements of arbitrarily large order. We may therefore assume that
there is a sequence $G_1,G_2,\ldots$ of cyclic subgroups of
$\widehat{G}$ of orders $n_1,n_2,\ldots$ with $n_j\xra[j \to
+\infty]{} +\infty$. Note that for any integer $j \geq 1$, we have
the following group isomorphisms
\begin{equation*}
G/G_j^{\perp}=\widehat{G_j}=\Z/n_j\Z.
\end{equation*}
Using Proposition \ref{Prop transfer quotient}, we conclude as the
case 2 of the proof of Proposition \ref{Th incond discrete}.

Case 3: $\widehat{G}$ is a group of bounded order. In this case, the
remark following Theorem \ref{Th struct group bounded} allows us to
claim that $\widehat{G}$ contains a subgroup isomorphic to the
direct sum $\oplus_{1}^\infty \mathbb{Z}/q\mathbb{Z}$ where $q$ is a
prime integer. Observe that we have the group isomorphisms
\begin{equation*}
G/(\oplus_{1}^\infty
\mathbb{Z}/q\mathbb{Z})^{\perp}=\widehat{\oplus_{1}^\infty
\mathbb{Z}/q\mathbb{Z}}=\Pi_{1}^\infty \mathbb{Z}/q\mathbb{Z}.
\end{equation*}
Using the fact that $\oplus_{1}^\infty \mathbb{Z}/q\mathbb{Z}$ is a
subgroup of $\Pi_{1}^\infty \mathbb{Z}/q\mathbb{Z}$, the result
follows by applying Proposition \ref{Prop transfer quotient},
Theorem \ref{Th transfer subgroups} and Proposition \ref{Prop incond
oplus}.
\end{proof}

The next result is the principal result of this section.
\begin{thm}
\label{Th incond locally} Let $G$ be an infinite locally compact
abelian group and $X$ a Banach space. We have a canonical
isomorphism $M_2(G,X)=M_2(G)$ if and only if the Banach space $X$ is
isomorphic to a Hilbert space.
\end{thm}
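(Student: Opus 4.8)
The plan is to reduce the general case to the already-established special cases via the structure theory of locally compact abelian groups. First I would invoke Theorem \ref{Th structure}: write $G \cong \R^n \times G_0$, where $G_0$ contains a compact subgroup $K$ with $G_0/K$ discrete. One direction is trivial, as noted before Proposition \ref{Prop incond T}: if $X$ is isomorphic to a Hilbert space then $M_2(G,X) = M_2(G)$ canonically. So assume $M_2(G,X) = M_2(G)$; I must show $X$ is isomorphic to a Hilbert space.

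The key organizing idea is that all the transfer results — Theorem \ref{Th transfer subgroups} (passing to closed subgroups) and Proposition \ref{Prop transfer quotient} (passing to quotients by compact subgroups) — show that the hypothesis $M_2(G,X) = M_2(G)$ descends: if it holds for $G$, it holds for any closed subgroup $H$ of $G$ and (via Proposition \ref{Prop transfer quotient}) for any quotient $G/H$ by a compact subgroup $H$. Indeed, if $M_\varphi$ is a bounded Fourier multiplier on $L^2(H)$ (resp. on $L^2(G/H)$), then $M_{\varphi\circ\pi}$ (resp. $M_{\widetilde\varphi}$) is a bounded Fourier multiplier on $L^2(G)$ with the same norm; by hypothesis its tensorisation is bounded on $L^2(G,X)$ with a uniform constant $C$; and the isometric nature of these maps lets one pull the bound back to show $M_\varphi\ot Id_X$ is bounded on $L^2(H,X)$ (resp. $L^2(G/H,X)$) with the same constant $C$. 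So $M_2(H,X) = M_2(H)$ and $M_2(G/H,X) = M_2(G/H)$. Since $G$ is infinite, at least one of the following holds: (a) $n \geq 1$, so $G$ contains a closed subgroup isomorphic to $\R$; (b) $n = 0$ and $G_0$ is infinite; within case (b), either $K$ is infinite, in which case $K$ is an infinite compact abelian group and $M_2(K,X) = M_2(K)$ by the subgroup transfer, or $K$ is finite, in which case $G_0/K$ is an infinite discrete abelian group and $M_2(G_0/K, X) = M_2(G_0/K)$ by Proposition \ref{Prop transfer quotient}.

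Then I would conclude by citing the three base cases already proved: if $\R$ embeds as a closed subgroup, apply Proposition \ref{Th incond R and Z}; if an infinite compact subgroup $K$ appears, apply Proposition \ref{Th incond compact groups}; if an infinite discrete quotient appears, apply Proposition \ref{Th incond discrete}. In every case we conclude that $X$ is isomorphic to a Hilbert space, which proves the theorem.

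The main obstacle is organizational rather than technical: one must check that $G$ being infinite genuinely forces one of the three scenarios above, i.e. that a finite $K$ together with a finite $G_0/K$ would force $G_0$ — hence $G$ when $n=0$ — to be finite, contradicting infiniteness. This is immediate since $|G_0| \leq |K|\cdot|G_0/K|$ when both factors are finite. A minor point to verify carefully is that the transfer theorems really do preserve the \emph{two-sided} statement "$M_2(\cdot,X) = M_2(\cdot)$" with a uniform constant, rather than just one inclusion; but this follows since the maps in Theorem \ref{Th transfer subgroups} and Proposition \ref{Prop transfer quotient} are isometries and the argument is exactly the one used implicitly in the proofs of Propositions \ref{Th incond discrete} and \ref{Th incond compact groups}, where subgroup and quotient transfer were already used to descend the hypothesis.
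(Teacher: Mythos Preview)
Your proposal is correct and follows essentially the same approach as the paper: invoke the structure theorem, then use subgroup transfer (Theorem \ref{Th transfer subgroups}) to reduce to $\R$ when $n\geq 1$ or to $K$ when $K$ is infinite, and handle the remaining case via the discrete result. The only cosmetic difference is in the last case ($n=0$, $K$ finite): you pass to the quotient $G_0/K$ via Proposition \ref{Prop transfer quotient} and apply Proposition \ref{Th incond discrete} there, whereas the paper observes directly that $K$ finite with $G_0/K$ discrete forces $G_0$ (hence $G$) itself to be discrete, so Proposition \ref{Th incond discrete} applies to $G$ without quotienting.
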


\begin{proof}
By Theorem \ref{Th structure}, the group $G$ is isomorphic to a
product $\R^n\times G_0$ where $G_0$ is a locally compact abelian
group containing a compact subgroup $K$ such that $G_0/K$ is
discrete. Suppose $n\geq 1$. If $M_2(G,X)=M_2(G)$, by Theorem
\ref{Th transfer subgroups}, we deduce a canonical isomorphism
$M_2(\R,X)=M_2(\R)$. Hence, by Proposition \ref{Th incond R and Z},
we conclude that the Banach space $X$ is isomorphic to a Hilbert
space. If the group $K$ is infinite, we apply a similar reasoning by
using Proposition \ref{Th incond compact groups} instead of
Proposition \ref{Th incond R and Z}. If $n=0$ and if $K$ is finite
then it is not difficult to see that $G$ is discrete. In this case
we conclude with Proposition \ref{Th incond discrete}.
\end{proof}

%%%%%%%%%%%%%%%%%%%%%%%%%%%%%%%%%%%%%%%%%%%%%%%%%%%%%%%%%%%%%%%%%%%%%%%%%%%%%%%%%%%%%%%%%%%%%%%%%%%%%%%%%%%%%%%%%%%%%%%%%%%%%%%%%%%%%%%%%%%%%%%%%%%%%%
%%%%%%%%%%%%%%%%%%%%%%%%%%%%%%%%%%%%%%%%%%%%%%%%%%%%%%%%%%%%%%%%%%%%%%%%%%%%%%%%%%%%%%%%%%%%%%%%%%%%%%%%%%%%%%%%%%%%%%%%%%%%%%%%%%%%%%%%%%%%%%%%%%%%%%
%%%%%%%%%%%%%%%%%%%%%%%%%%%%%%%%%%%%%%%%%%%%%%%%%%%%%%%%%%%%%%%%%%%%%%%%%%%%%%%%%%%%%%%%%%%%%%%%%%%%%%%%%%%%%%%%%%%%%%%%%%%%%%%%%%%%%%%%%%%%%%%%%%%%%%
\section{Bounded Fourier multipliers which are not completely bounded}
%%%%%%%%%%%%%%%%%%%%%%%%%%%%%%%%%%%%%%%%%%%%%%%%%%%%%%%%%%%%%%%%%%%%%%%%%%%%%%%%%%%%%%%%%%%%%%%%%%%%%%%%%%%%%%%%%%%%%%%%%%%%%%%%%%%%%%%%%%%%%%%%%%%%%%
%%%%%%%%%%%%%%%%%%%%%%%%%%%%%%%%%%%%%%%%%%%%%%%%%%%%%%%%%%%%%%%%%%%%%%%%%%%%%%%%%%%%%%%%%%%%%%%%%%%%%%%%%%%%%%%%%%%%%%%%%%%%%%%%%%%%%%%%%%%%%%%%%%%%%%
%%%%%%%%%%%%%%%%%%%%%%%%%%%%%%%%%%%%%%%%%%%%%%%%%%%%%%%%%%%%%%%%%%%%%%%%%%%%%%%%%%%%%%%%%%%%%%%%%%%%%%%%%%%%%%%%%%%%%%%%%%%%%%%%%%%%%%%%%%%%%%%%%%%%%%
In this section, we prove that if $1<p<\infty$, $p\not=2$, there
exists a bounded Fourier multiplier on $L^p(G)$ which is not
completely bounded where $G$ is an infinite locally compact abelian
group. The cases of groups $\R$, $\Z$ and infinite compact abelian
groups are already known. We start by extending these results to the
discrete group $\oplus_{1}^\infty \mathbb{Z}/q\mathbb{Z}$, where
$q\geq 2$ is an integer. In the proof of this result, we will use
the notations introduced before Proposition \ref{Prop incond oplus}.

\begin{prop}
\label{Prop mult non cb on oplus} Suppose $1<p<\infty$, $p\not=2$.
Let $q\geq 2$ an integer. There exists a bounded Fourier multiplier
on $L^p\big(\oplus_{1}^\infty \mathbb{Z}/q\mathbb{Z}\big)$ which is
not completely bounded.
\end{prop}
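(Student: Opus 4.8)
The plan is to mimic the strategy already used for $\mathbb{T}$ (Theorem~\ref{Th non cb compact}) and for $\R$ and $\Z$ (Theorem~\ref{Th noncb on Z and R}): exploit the failure of property $(\alpha)$ for $S^p$ when $p\neq 2$, and realize this failure through concrete Fourier multipliers on the Vilenkin group $\Pi_1^\infty\mathbb{Z}/q\mathbb{Z}$, the dual of $\oplus_1^\infty\mathbb{Z}/q\mathbb{Z}$. Since the statement asks only for the existence of \emph{a} bounded, non-completely-bounded Fourier multiplier on $L^p(\oplus_1^\infty\mathbb{Z}/q\mathbb{Z})$, and since $M_p(\oplus_1^\infty\mathbb{Z}/q\mathbb{Z})$ can be identified (via duality of Fourier multipliers) with a space of multipliers on the compact group $\Pi_1^\infty\mathbb{Z}/q\mathbb{Z}$, it suffices to produce a multiplier whose symbol is supported on the Sidon set $(\varepsilon_{i,q})_{i\geq 1}$ and which, on the corresponding Rademacher-type subspace, acts like a diagonal Schur-type operator that is bounded but not completely bounded.

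First I would argue by contradiction: suppose every bounded Fourier multiplier on $L^p(\oplus_1^\infty\mathbb{Z}/q\mathbb{Z})$ is completely bounded, i.e.\ $M_p(\oplus_1^\infty\mathbb{Z}/q\mathbb{Z}) = M_p(\oplus_1^\infty\mathbb{Z}/q\mathbb{Z}, S^p)$ with a uniform bound $C$. Next, for each integer $n$, the subgroup $\Omega_q^n\times\Omega_q^n$ is a closed subgroup of $\oplus_1^\infty\mathbb{Z}/q\mathbb{Z}$, and by Theorem~\ref{Th transfer subgroups} applied with $X=S^p$ the uniform bound transfers: any bounded Fourier multiplier on $L^p(\Omega_q^n\times\Omega_q^n)$ is completely bounded with the same constant $C$. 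Restricting attention to multipliers supported on $\{\varepsilon_{i,q}\otimes\varepsilon_{j,q}: 1\leq i,j\leq n\}$ — which by Lemma~\ref{Lemma epsq is a Sidon set} and Theorem~\ref{Th transfer Sidon} (the $S^p$-valued version, valid since $S^p$ is a UMD space / via the projection $P$ onto the Sidon span being bounded with $Id_{S^p}$) corresponds, up to equivalence of norms, to operators on $\Rad(\Rad(S^p))$ — a diagonal multiplier $(t_{ij})$ becomes a double-indexed diagonal operator. Then one invokes Lemma~\ref{Lemma formule RAdRAd} together with the $n\times n$ case: the hypothesis would give
\begin{equation*}
\Bgnorm{\sum_{i,j=1}^n t_{ij}\,e_{ij}\ot x_{ij}}_{S^p(S^p)} \leq C' \sup_{i,j}|t_{ij}|\ \Bgnorm{\sum_{i,j=1}^n e_{ij}\ot x_{ij}}_{S^p(S^p)}
\end{equation*}
uniformly in $n$, for all scalars $t_{ij}$ and all $x_{ij}\in S^p$. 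But this is precisely property $(\alpha)$ for $S^p$ (after translating through the Rademacher identification), which is known to fail for $1<p<\infty$, $p\neq 2$. This contradiction yields the desired multiplier.

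The main obstacle I anticipate is the transfer step connecting multipliers on the Vilenkin group with Schur multipliers on $S^p(S^p)$: one must be careful that the Sidon-set transfer (Theorem~\ref{Th transfer Sidon}) is being applied with values in the operator space $S^p$ rather than a general Banach space, and that the relevant projection onto the closed span of $\{\varepsilon_{i,q}\}$ remains bounded after tensoring with $Id_{S^p}$ — this is where the UMD property of $S^p$ (or a direct Vilenkin martingale argument) enters. A secondary point requiring care is that "bounded Fourier multiplier on $L^p(\Omega_q^n\times\Omega_q^n)$" must genuinely capture an arbitrary bounded diagonal array $(t_{ij})$ with $\sup|t_{ij}|\leq 1$; on a finite abelian group every function of the dual is trivially a bounded multiplier on $L^2$, but on $L^p$ one needs the diagonal array, viewed against the $\varepsilon_{i,q}$-span, to have multiplier norm controlled by $\sup|t_{ij}|$, which again follows from the Sidon property (via the bounded projection) combined with the fact that on the Rademacher span a diagonal array with $\sup|t_{ij}|\le 1$ is a contraction. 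Once these identifications are in place, the contradiction with the failure of $(\alpha)$ for $S^p$ is immediate, and the proof concludes. Finally, one may remark that this proposition will serve, via Theorem~\ref{Th transfer subgroups}, Proposition~\ref{Prop transfer quotient} and the structure theorems, as the building block for the general locally compact case, exactly as Proposition~\ref{Prop incond oplus} did in Section~3.
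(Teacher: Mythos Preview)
Your overall strategy matches the paper's: transfer to the finite subgroups $\Omega_q^n\times\Omega_q^n$ via Theorem~\ref{Th transfer subgroups}, use the Sidon property of $(\varepsilon_{i,q})_{i\geq 1}$ together with Theorem~\ref{Th transfer Sidon} to pass to $\Rad(\Rad(S^p))$, and derive a contradiction with the failure of property~$(\alpha)$ for $S^p$. The paper phrases this as a direct construction (build $T_\tau$ on the span $\mathcal{R}^p_{2,q}$, show it is bounded with norm $\lesssim\|\tau\|_\infty$, then invoke failure of~$(\alpha)$ to get large $\|T_\tau\ot Id_{S^p}\|$, and finally compose with the scalar Sidon projection to make it a genuine Fourier multiplier), while you argue by contradiction; these are equivalent.

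Two points need correction, though. First, the appeal to Lemma~\ref{Lemma formule RAdRAd} and the displayed $S^p(S^p)$ inequality is a wrong turn. That lemma converts $\bnorm{\sum e_{ij}\ot x_{ij}}_{S^p(E)}$ into a $\Rad(\Rad(S^p(E)))$ norm with an extra $e_{ij}$ in the tensor, which is \emph{not} the $\Rad(\Rad(S^p))$ quantity you have in hand; and the inequality you display is the statement that every Schur multiplier with bounded symbol is completely bounded on $S^p$, which is Pisier's open conjecture mentioned in the final remark of the paper --- not ``precisely property~$(\alpha)$''. Fortunately the detour is unnecessary: once Sidon transfer (Theorem~\ref{Th transfer Sidon} applied with $X=S^p$, valid for any Banach space) gives
\[
\Bgnorm{\sum_{i,j=1}^n t_{ij}\,\varepsilon_i\ot\varepsilon_j\ot x_{ij}}_{\Rad(\Rad(S^p))}\ \lesssim\ \sup_{i,j}|t_{ij}|\ \Bgnorm{\sum_{i,j=1}^n \varepsilon_i\ot\varepsilon_j\ot x_{ij}}_{\Rad(\Rad(S^p))},
\]
you already have property~$(\alpha)$ for $S^p$ by definition, and the contradiction is immediate. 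Drop Lemma~\ref{Lemma formule RAdRAd} entirely here.

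Second, your worry about needing the Sidon projection to be bounded after tensoring with $Id_{S^p}$ (and hence needing UMD) is unfounded. In both the paper's argument and your contradiction argument, the projection is used only at the scalar level, to certify that the multiplier $M_\tau$ (with symbol $t_{ij}$ on $\varepsilon_{i,q}\ot\varepsilon_{j,q}$ and zero elsewhere) is bounded on $L^p(\Omega_q^n\times\Omega_q^n)$ with norm $\lesssim\|\tau\|_\infty$. The vector-valued estimate then follows by \emph{testing} $M_\tau\ot Id_{S^p}$ on elements already lying in the span, so $P\ot Id_{S^p}$ never appears.
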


\begin{proof}
By Theorem \ref{Th transfer subgroups} and the closed graph theorem,
it suffices to prove that there exists contractive Fourier
multipliers on the group $\Omega_q^n\times
\Omega_q^n=(\mathbb{Z}/q\mathbb{Z}\times\cdots
\times\mathbb{Z}/q\mathbb{Z})\times
(\mathbb{Z}/q\mathbb{Z}\times\cdots \times\mathbb{Z}/q\mathbb{Z})$
with arbitrary large completely bounded norms in $n$. By Theorem
\ref{Th transfer Sidon} and Lemma \ref{Lemma epsq is a Sidon set},
we have
\begin{equation*}
\Bgnorm{\sum_{i,j=1}^n \varepsilon_i \ot \varepsilon_{j}\ot
x_{ij}}_{\Rad(\Rad(S^p))} \approx\ \Bgnorm{\sum_{i,j=1}^n
\varepsilon_{i,q} \ot \varepsilon_{j,q} \ot
x_{ij}}_{L^p(\Omega_{q}^n \times \Omega_{q}^n,S^p)},\qquad
n\in\mathbb{N}, x_{ij}\in S^p.
\end{equation*}
We let $\mathcal{R}^{p}_{2,q}$ denote the closed span of the
$\varepsilon_{i,q} \ot \varepsilon_{j,q}$'s in $L^p\big(\Omega_{q}^n
\times \Omega_{q}^n\big)$ where $1\leq i,j\leq n$. For any family
$\tau=(t_{ij})_{i,j\geq1}$ of complex numbers we consider the linear
map
\begin{equation*}
\begin{array}{cccc}
 T_\tau:   &             \mathcal{R}^{p}_{2,q}                        &  \longrightarrow   & L^p(\Omega_{q}^n \times \Omega_{q}^n)   \\
           &  \varepsilon_{i,q} \ot \varepsilon_{j,q}   &  \longmapsto       &  t_{ij}\varepsilon_{i,q}\ot \varepsilon_{j,q}.  \\
\end{array}
\end{equation*}
Note that we have
\begin{equation*}
\Bgnorm{\sum_{i,j=1}^n \alpha_{ij} \varepsilon_{i,q}\ot
\varepsilon_{j,q}}_{L^p(\Omega_{q}^n \times \Omega_{q}^n)} \approx
\Bgnorm{\sum_{i,j=1}^n \alpha_{ij} \varepsilon_i\ot
\varepsilon_{j}}_{L^p(\Omega_0 \times \Omega_0)} \approx
\Bigg(\sum_{i,j=1}^n |\alpha_{ij}|^2\Bigg)^{\frac{1}{2}},\qquad
n\in\mathbb{N}, \alpha_{ij}\in \mathbb{C}
\end{equation*}
(see \cite[Lemma 2.1]{Pis1} or \cite[page 455]{Def}). Then, for any
complex numbers $\alpha_{ij}\in \C$, we have
\begin{align*}
\Bgnorm{T_\tau\Bigg(\sum_{i,j=1}^n \alpha_{ij} \varepsilon_{i,q}\ot
\varepsilon_{j,q}\Bigg)}_{L^p(\Omega_{q}^n \times \Omega_{q}^n)}
   &=   \Bgnorm{\sum_{i,j=1}^n t_{ij}\alpha_{ij} \varepsilon_{i,q}\ot \varepsilon_{j,q}}_{L^p(\Omega_{q}^n \times \Omega_{q}^n)} \\
   &\approx \Bigg(\sum_{i,j=1}^n |t_{ij}\alpha_{ij}|^2\Bigg)^{\frac{1}{2}} \\
  % &\leq \sup_{1\leq i,j\leq n}|t_{ij}|\ \Bigg(\sum_{i,j=1}^n |\alpha_{ij}|^2\Bigg)^{\frac{1}{2}} \\
   &\lesssim  \sup_{1\leq i,j\leq n}|t_{ij}|\ \Bgnorm{\sum_{i,j=1}^n \alpha_{ij}\ot \varepsilon_{i,q}\ot \varepsilon_{j,q}}_{L^p(\Omega_{q}^n \times \Omega_{q}^n)}.
\end{align*}
Consequently, we have $\norm{T_\tau}_{\mathcal{R}^{p}_{2,q}
\xra{}L^p(\Omega_q^{n}\times \Omega_q^{n})}  \lesssim
\norm{\tau}_{\infty}$. Since $S^p$ does not have the property
$(\alpha)$ there exists complex numbers $t_{ij}\in \C$ with
$|t_{ij}|=1$
and large $\bnorm{T_\tau\ot Id_{S^p}}$. %Now, note that if $f\in
%L^p(\Pi_{1}^{\infty} \mathbb{Z}/q\mathbb{Z})$
%\begin{align*}
%    \Bgnorm{\sum_{i}\langle f, \varepsilon_{i,q}\rangle\varepsilon_{i,q}}_{L^p(\Pi_{1}^{\infty} \mathbb{Z}/q\mathbb{Z})}
%    &\lesssim  \Bgnorm{\sum_{i}\langle f, \varepsilon_{i,q}\rangle\varepsilon_{i,q}}_{L^p} \norm{}_{L^2(\Pi_{1}^{\infty} \mathbb{Z}/q\mathbb{Z})}\\
%    &\lesssim  \norm{f}_{L^2(\Pi_{1}^{\infty} \mathbb{Z}/q\mathbb{Z})}\\
%    &\lesssim  \norm{f}_{L^p(\Pi_{1}^{\infty} \mathbb{Z}/q\mathbb{Z})}.
%\end{align*}
Now, using the canonical bounded projection from
$L^p\big(\Pi_{1}^{\infty} \mathbb{Z}/q\mathbb{Z}\big)$ on the
closure of ${\rm Span}\bigl\{\varepsilon_{i,q}\ |\ i\geq 1 \bigr\}$
in the space $L^p\big(\Pi_{1}^{\infty} \mathbb{Z}/q\mathbb{Z}\big)$,
we see that there exists a bounded projection from
$L^p\big(\Pi_{1}^{\infty} \mathbb{Z}/q\mathbb{Z}\times
\Pi_{1}^{\infty} \mathbb{Z}/q\mathbb{Z}\big)$ on
$\mathcal{R}^{p}_{2,q}$. Applying the inclusion map
$L^p\big(\Omega_q^n\times \Omega_q^n\big)\to
L^p\big(\Pi_{1}^{\infty} \mathbb{Z}/q\mathbb{Z}\times
\Pi_{1}^{\infty} \mathbb{Z}/q\mathbb{Z}\big)$ we obtain a bounded
projection from $L^p\big(\Omega_q^n \times \Omega_q^n\big)$ on
$\mathcal{R}^{p}_{2,q}$ with a norm which is bounded independently
of $n$. Finally, by composing with this projection, we obtain
contractive Fourier multipliers on the group $\Omega_q^n\times
\Omega_q^n$ with arbitrary completely bounded norms in $n$.
\end{proof}

Now, we can state and prove the second main result of this paper.
\begin{thm}
\label{Th noncb locally} Suppose $1<p<\infty$, $p\not=2$. Let $G$ be
an infinite locally compact abelian group. There exists a bounded
Fourier multiplier on $L^p(G)$ which is not completely bounded.
\end{thm}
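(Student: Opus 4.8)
The plan is to follow the same reduction scheme used for Theorem \ref{Th incond locally}, transferring the cases already settled --- $\R$, $\Z$, infinite compact abelian groups, and the direct sums $\oplus_1^\infty\Z/q\Z$ --- to an arbitrary infinite locally compact abelian group via the isometric transference results of Section 2. By Theorem \ref{Th structure} we may write $G\cong\R^n\times G_0$, where $G_0$ contains a compact subgroup $K$ with $G_0/K$ discrete. I would first dispose of the two easy situations. If $n\geq 1$, then $\R$ embeds as a closed subgroup of $G$; Theorem \ref{Th noncb on Z and R} supplies a bounded Fourier multiplier on $L^p(\R)$ which is not completely bounded, and Theorem \ref{Th transfer subgroups}, used once with $X=\C$ and once with $X=S^p$ (recalling that $M_p(\cdot,S^p)$ is precisely the space of completely bounded Fourier multipliers and that the transference preserves membership and multiplier norm, with infinite values preserved as well), carries it to a bounded, non completely bounded Fourier multiplier on $L^p(G)$. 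If $n=0$ but $K$ is infinite, the same argument applies with $\R$ replaced by the closed subgroup $K$ and Theorem \ref{Th noncb on Z and R} replaced by Theorem \ref{Th non cb compact}. There remains the case $n=0$ and $K$ finite, in which $G=G_0$ is an infinite discrete abelian group.

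For an infinite discrete abelian group $G$ I would split into three sub-cases, exactly as in the proof of Proposition \ref{Th incond discrete}. If $G$ is not a torsion group it contains a copy of $\Z$, and Theorem \ref{Th transfer subgroups} together with Theorem \ref{Th noncb on Z and R} concludes; if $G$ has bounded order it contains a copy of $\oplus_1^\infty\Z/q\Z$ for some prime $q$ (by the remark following Theorem \ref{Th struct group bounded}), and Theorem \ref{Th transfer subgroups} together with Proposition \ref{Prop mult non cb on oplus} concludes. The remaining sub-case --- $G$ a torsion group containing elements of arbitrarily large order --- is the one that requires genuine work and is the main obstacle, since $G$ then contains cyclic subgroups $G_{n_j}$ of orders $n_j\to+\infty$ but no single universal subgroup to transfer from. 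Here I would instead produce, for every constant $C$, a contractive Fourier multiplier on some $L^p(\Z/n\Z)$ with completely bounded norm at least $C$. Starting from a bounded but not completely bounded Fourier multiplier on $L^p(\mathbb{T})$ (Theorem \ref{Th non cb compact} with $G=\mathbb{T}$), Theorem \ref{Th Larsen} approximates it in the strong operator topology by contractive convolution operators $C_b$ with continuous kernel; by the density argument used in Proposition \ref{Th incond discrete} their completely bounded norms cannot stay bounded, so after normalization one obtains a contractive $C_b$ with $\bnorm{C_b\ot Id_{S^p}}\geq 2C$. Viewing $b$ as a $1$-periodic function and discretizing exactly as in Proposition \ref{Th incond discrete} --- with $\mathbb{E}_n$ the conditional expectation onto the step functions with steps $\big[\frac{k}{n},\frac{k+1}{n}\big[$ and $J_n\colon\ell^p_n\to\mathbb{E}_n\big(L^p([0,1])\big)$ the natural isometry --- the identity $\mathbb{E}_nC_bJ_n=J_nC_{a_n}$ with $a_n$ given by (\ref{ank}) shows that $C_{a_n}$ is a contraction on $\ell^p_n=L^p(\Z/n\Z)$ while $\bnorm{C_{a_n}\ot Id_{S^p}}\geq C$ for all large $n$, because $\mathbb{E}_n\ot Id_{S^p}$ is contractive and $J_n\ot Id_{S^p}$ isometric.

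To conclude, for $j$ large enough the cyclic group $G_{n_j}$ carries a contractive Fourier multiplier whose completely bounded norm exceeds $C$; transferring it isometrically to $G$ by Theorem \ref{Th transfer subgroups} (again with $X=\C$ and with $X=S^p$) yields, for every $C$, a contractive Fourier multiplier on $L^p(G)$ with completely bounded norm at least $C$. If every bounded Fourier multiplier on $L^p(G)$ were completely bounded, then $M_p(G)=M_p(G,S^p)$ as sets and the closed graph theorem applied to the inclusion $M_p(G)\hookrightarrow M_p(G,S^p)$ would produce a constant $C_0$ with $\norm{T}_{cb}\leq C_0\norm{T}$ for all $T$, contradicting the previous sentence for $C>C_0$. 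Hence there exists a bounded Fourier multiplier on $L^p(G)$ which is not completely bounded, as desired.
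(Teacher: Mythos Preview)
Your proposal is correct and follows essentially the same approach as the paper: the paper's proof is a terse ``similar to Proposition \ref{Th incond discrete} and Theorem \ref{Th incond locally}'' together with the one explicit modification you carry out --- replacing the $L^2$ conditional expectation $\mathbb{E}_n$ and the isometry $J_n$ by their $L^p$ versions (with the factor $n^{1/p}$) in the torsion-but-unbounded-order sub-case. Your write-up simply unpacks that sketch in full, including the closed graph argument at the end.
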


\begin{proof}
The proof is similar to the ones of Proposition \ref{Th incond
discrete} and Theorem \ref{Th incond locally}. The case of a
discrete group of torsion need some minor modifications. We prove it
by a reasoning similar to the one used in the proof of Proposition
\ref{Th incond discrete} using the conditional expectation defined
by (\ref{def esperance}) as an operator $\mathbb{E}_n\colon
L^p\big([0,1]\big) \to L^p\big([0,1]\big)$ and using the isometric
map $J_n\colon \ell^p_n \xra{} \mathbb{E}_n \big(L^p([0,1])\big)$
defined by, if $u \in \ell^p_n$
\begin{equation*}
J_n(u)=n^{\frac{1}{p}}\sum_{k=0}^{n-1} u_k
1_{[\frac{k}{n},\frac{k+1}{n}[}.
\end{equation*}
\end{proof}

\begin{remark}
Using the fact the space $S^p$ does not have property $(\alpha)$ if
$1<p<\infty$, $p\not=2$ and the method used at the beginning of the
proof of Proposition \ref{Prop incond T}, anyone can give a proof of
Theorem \ref{Th noncb locally} for the case $G=\T$. The more general
case where $G$ is an infinite compact abelian group can also be
obtained with the method of the proof of Proposition \ref{Th incond
compact groups}.
\end{remark}

\begin{remark}
Recall the following classical result of S.
Kwapie\'{n} \cite{Kwa2}. Suppose $1<p<\infty$. A Banach space $X$ is
isomorphic to an $SQL^p$-space, i.e a subspace of a quotient of an
$L^p$-space if and only if for any measure space $\Omega$ and any
bounded operator $T\colon L^p(\Omega) \to L^p(\Omega)$, the operator
$T\ot Id_X\colon L^p(\Omega,X)\to L^p(\Omega,X)$ is bounded. The
results of Section 3 and of this section lead to the general
following open question. Let $X$ be a Banach space and $G$ be an
infinite locally compact abelian group. If we have a canonical
isomorphism $M_p(G,X)=M_p(G)$, do we have an isomorphism from the
Banach space $X$ on an $SQL^p$-space?
\end{remark}

%%%%%%%%%%%%%%%%%%%%%%%%%%%%%%%%%%%%%%%%%%%%%%%%%%%%%%%%%%%%%%%%%%%%%%%%%%%%%%%%%%%%%%%%%%%%%%%%%%%%%%%%%%%%%%%%%%%%%%%%%%%%%%%%%%%%%%%%%%%%%%%%%%%%%%
%%%%%%%%%%%%%%%%%%%%%%%%%%%%%%%%%%%%%%%%%%%%%%%%%%%%%%%%%%%%%%%%%%%%%%%%%%%%%%%%%%%%%%%%%%%%%%%%%%%%%%%%%%%%%%%%%%%%%%%%%%%%%%%%%%%%%%%%%%%%%%%%%%%%%%
%%%%%%%%%%%%%%%%%%%%%%%%%%%%%%%%%%%%%%%%%%%%%%%%%%%%%%%%%%%%%%%%%%%%%%%%%%%%%%%%%%%%%%%%%%%%%%%%%%%%%%%%%%%%%%%%%%%%%%%%%%%%%%%%%%%%%%%%%%%%%%%%%%%%%%
\section{Unconditionality and Schur multipliers}
%%%%%%%%%%%%%%%%%%%%%%%%%%%%%%%%%%%%%%%%%%%%%%%%%%%%%%%%%%%%%%%%%%%%%%%%%%%%%%%%%%%%%%%%%%%%%%%%%%%%%%%%%%%%%%%%%%%%%%%%%%%%%%%%%%%%%%%%%%%%%%%%%%%%%%
%%%%%%%%%%%%%%%%%%%%%%%%%%%%%%%%%%%%%%%%%%%%%%%%%%%%%%%%%%%%%%%%%%%%%%%%%%%%%%%%%%%%%%%%%%%%%%%%%%%%%%%%%%%%%%%%%%%%%%%%%%%%%%%%%%%%%%%%%%%%%%%%%%%%%%
%%%%%%%%%%%%%%%%%%%%%%%%%%%%%%%%%%%%%%%%%%%%%%%%%%%%%%%%%%%%%%%%%%%%%%%%%%%%%%%%%%%%%%%%%%%%%%%%%%%%%%%%%%%%%%%%%%%%%%%%%%%%%%%%%%%%%%%%%%%%%%%%%%%%%%

Suppose $1<p<\infty$. In this section, we use the notation $S^p_\Z=
S^p\big(\ell^2_\Z\big)$ and $S^p_{\Z \times \mathbb{N}}=
S^p\big(\ell^2_{\Z \times \mathbb{N}}\big)$. Recall that a Schur
multiplier on $S^p$ is a linear map $M_A\colon S^p\rightarrow S^p$
defined by a scalar matrix $A$ such that $M_A(B)=[a_{ij}b_{ij}]$
belongs to $S^p$ for any $B\in S^p$. We have a similar notion for
$S^p_\Z$. In the sequel, $(\varepsilon_{ij})_{i,j\geq 1}$ denotes a
doubly indexed family of independent Rademacher variables.

The paper \cite{Lee} contains the following result:
\begin{thm}
\label{Th Lee} Let $E$ be an operator space. Then $E$ is completely
isomorphic to an operator Hilbert space $OH(I)$ for some index set
$I$ if and only if we have an equivalence
\begin{equation*}
\Bgnorm{\sum_{i,j=1}^{n}e_{ij}\ot x_{ij}}_{S^2(E)}\approx
\Bgnorm{\sum_{i,j=1}^{n}\varepsilon_{ij} \ot
x_{ij}}_{\Rad(E)},\qquad n\in \mathbb{N}, x_{ij}\in E.
\end{equation*}
\end{thm}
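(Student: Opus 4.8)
I would prove the two implications of the equivalence separately. The implication ``$E$ completely isomorphic to some $OH(I)$'' $\Rightarrow$ ``the estimate holds'' is routine; the reverse one is the substantial part.

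For the routine implication, let $u\colon E\to OH(I)$ be a complete isomorphism. The functor $S^2(\cdot)$ sends $u$ to a Banach space isomorphism $S^2(E)\to S^2(OH(I))$ of norm $\leq\norm{u}_{cb}$ whose inverse $S^2(u^{-1})$ has norm $\leq\norm{u^{-1}}_{cb}$, and $\Id_{L^2(\Omega_0)}\ot u$ is a Banach space isomorphism $\Rad(E)\to\Rad(OH(I))$ with the same bounds. Hence it suffices to verify the equivalence, in fact an equality, for $E=OH(I)$. Using the identification of the vector-valued Schatten space $S^2(OH(I))$ with an operator Hilbert space (so that, for an orthonormal basis $(f_k)$ of $OH(I)$, the vectors $e_{ij}\ot f_k$ form an orthonormal basis of the Hilbert space $S^2(OH(I))$), one obtains $\Bgnorm{\sum_{i,j=1}^n e_{ij}\ot x_{ij}}_{S^2(OH(I))}=\big(\sum_{i,j=1}^n\norm{x_{ij}}^2\big)^{1/2}$. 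Likewise, since the underlying Banach space of $OH(I)$ is Hilbertian and the $\varepsilon_{ij}$ are orthonormal in $L^2(\Omega_0)$, we get $\Bgnorm{\sum_{i,j=1}^n\varepsilon_{ij}\ot x_{ij}}_{\Rad(OH(I))}=\big(\sum_{i,j=1}^n\norm{x_{ij}}^2\big)^{1/2}$, so the two sides coincide.

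For the reverse implication, suppose the equivalence holds. I would first test it on diagonal families, i.e. on $(x_{ij})$ with $x_{ij}=0$ whenever $i\neq j$. A standard Fubini-type identity for vector-valued Schatten spaces gives $\Bgnorm{\sum_{i=1}^n e_{ii}\ot x_i}_{S^2(E)}\approx\big(\sum_{i=1}^n\norm{x_i}^2\big)^{1/2}$, while the second expression in the hypothesis becomes an ordinary Rademacher average $\Bgnorm{\sum_{i=1}^n\varepsilon_i\ot x_i}_{\Rad(E)}$ after relabelling the independent variables $\varepsilon_{ii}$. Thus $\big(\sum_{i=1}^n\norm{x_i}_E^2\big)^{1/2}\approx\Bgnorm{\sum_{i=1}^n\varepsilon_i\ot x_i}_{\Rad(E)}$, which means that $E$ has type $2$ and cotype $2$; by Kwapie\'n's theorem $E$ is isomorphic, as a Banach space, to a Hilbert space $H$, and we set $|I|=\dim H$. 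Since $\Rad(E)=L^2(\Omega_0,E)$ is then uniformly isomorphic to $\ell^2_n(E)$, we have $\Bgnorm{\sum_{i,j=1}^n\varepsilon_{ij}\ot x_{ij}}_{\Rad(E)}\approx\big(\sum_{i,j=1}^n\norm{x_{ij}}^2\big)^{1/2}$, and the hypothesis reduces to the single two-sided estimate
\begin{equation*}
\Bgnorm{\sum_{i,j=1}^n e_{ij}\ot x_{ij}}_{S^2(E)}\approx\Bigg(\sum_{i,j=1}^n\norm{x_{ij}}_E^2\Bigg)^{\frac{1}{2}},\qquad n\in\nn,\ x_{ij}\in E.
\end{equation*}

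It then remains to deduce from this that $E$ is completely isomorphic to $OH(I)$. Here I would invoke the characterisation of the operator Hilbert space due to Pisier: the displayed estimate forces, in particular, the column-valued and row-valued $S^2$-norms of $E$ (obtained by restricting $(x_{ij})$ to a single column, respectively a single row) to be equivalent to the trivial Hilbertian expression, and a Hilbertian operator space with this property is completely isomorphic to $OH(I)$, the degree of complete isomorphism being controlled by the equivalence constant. I expect this last step to be the main obstacle: one must isolate and apply the appropriate quantitative (``local'') form of Pisier's $OH$-characterisation and check that the above two-sided estimate meets its hypotheses. A secondary, more technical point is the precise identification, with the correct tensor norm, of $S^2(OH(I))$ in the routine implication and of the diagonal embedding $\ell^2_n(E)\hookrightarrow S^2(E)$ invoked in the reduction.
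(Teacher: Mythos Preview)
This theorem is not proved in the paper at all: it is quoted verbatim from \cite{Lee} (see the sentence ``The paper \cite{Lee} contains the following result'' immediately preceding the statement), and is then used as a black box in the proof of Proposition~\ref{Th equivalence OH S2E}. So there is no ``paper's own proof'' to compare your proposal against.

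As for the proposal itself: the forward implication is handled correctly. In the reverse implication, your diagonal test and the appeal to Kwapie\'n's theorem are fine, and the identity $\bigl\|\sum_i e_{ii}\otimes x_i\bigr\|_{S^2(E)}=(\sum_i\|x_i\|_E^2)^{1/2}$ does hold (the diagonal of $S^p_n(E)$ is $\ell^p_n(E)$, by interpolating the obvious identifications at $p=1$ and $p=\infty$). However, once you have reduced to the statement ``$E$ Hilbertian and $\bigl\|\sum_{i,j} e_{ij}\otimes x_{ij}\bigr\|_{S^2(E)}\approx(\sum_{i,j}\|x_{ij}\|^2)^{1/2}$ implies $E$ completely isomorphic to $OH(I)$'', you have essentially restated the hard direction rather than proved it: this is precisely the operator-space Kwapie\'n theorem that \cite{Lee} establishes. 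Your suggestion to restrict to single columns and rows and invoke ``Pisier's $OH$-characterisation'' is not, as stated, a known route---knowing that the column and row norms in $S^2(E)$ are Hilbertian does not by itself force the full $M_n(E)$-norms to match those of $OH$, and Pisier's characterisation of $OH$ is via complete self-duality $E\simeq\overline{E}^*$, which is not obviously implied by your column/row condition. Lee's actual argument passes through the theory of operator-space type and cotype and an operator-space analogue of Kwapie\'n's factorisation; if you want a self-contained proof, that is the machinery you would need to develop or cite.
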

First, we show a link between a property of the Banach space
$S^2(E)$ and a property of the operator space $E$.
\begin{prop}
\label{Th equivalence OH S2E}
Let $E$ be an operator space. The
following assertions are equivalent.
\begin{itemize}
  \item The Banach space $S^2(E)$ is isomorphic to a Hilbert space.
  \item The operator space $E$ is completely isomorphic to an operator Hilbert space $OH(I)$ for some index set $I$.
\end{itemize}
\end{prop}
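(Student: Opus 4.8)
The plan is to reduce both implications to Theorem~\ref{Th Lee}, using two standard facts about a Banach space $Y$ that is isomorphic to a Hilbert space: first, $\|\sum_i \varepsilon_i\ot y_i\|_{\Rad(Y)}\approx\big(\sum_i\|y_i\|_Y^2\big)^{1/2}$, and likewise $\|\sum_{i,j}\varepsilon_i\ot\varepsilon_j\ot y_{ij}\|_{\Rad(\Rad(Y))}\approx\big(\sum_{i,j}\|y_{ij}\|_Y^2\big)^{1/2}$, both obtained by transporting through a fixed isomorphism the exact identities valid when $Y$ is literally a Hilbert space; second, a closed subspace of $Y$ is again isomorphic to a Hilbert space, and a Banach space whose norm is equivalent, on a dense subspace, to a norm of Hilbertian type is itself isomorphic to a Hilbert space. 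I shall also use that $x\mapsto e_{11}\ot x$ embeds $E$ isometrically into $S^2(E)$, and more generally that $\|e_{ij}\ot x\|_{S^2(E)}=\|x\|_E$.

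Assume (2), i.e. $E$ is completely isomorphic to some $OH(I)$. By Theorem~\ref{Th Lee}, $\|\sum_{i,j}e_{ij}\ot x_{ij}\|_{S^2(E)}\approx\|\sum_{i,j}\varepsilon_{ij}\ot x_{ij}\|_{\Rad(E)}$. Since $OH(I)$, hence $E$, is isomorphic to a Hilbert space as a Banach space, the first fact above rewrites the right-hand side as $\approx\big(\sum_{i,j}\|x_{ij}\|_E^2\big)^{1/2}$. Thus on the dense subspace of finitely supported elements the norm of $S^2(E)$ is equivalent to $(x_{ij})\mapsto\big(\sum_{i,j}\|x_{ij}\|_E^2\big)^{1/2}$, a Hilbertian norm since $E$ is isomorphic to a Hilbert space; hence $S^2(E)$ is isomorphic to a Hilbert space, which is (1).

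Conversely, assume (1). Because $E$ embeds isometrically into $S^2(E)$, $E$ is isomorphic to a Hilbert space, so $\|\sum_{i,j}\varepsilon_{ij}\ot x_{ij}\|_{\Rad(E)}\approx\big(\sum_{i,j}\|x_{ij}\|_E^2\big)^{1/2}$. On the other hand, Lemma~\ref{Lemma formule RAdRAd} with $p=2$ gives $\|\sum_{i,j}e_{ij}\ot x_{ij}\|_{S^2(E)}=\|\sum_{i,j}\varepsilon_i\ot\varepsilon_j\ot e_{ij}\ot x_{ij}\|_{\Rad(\Rad(S^2(E)))}$; since $S^2(E)$ is isomorphic to a Hilbert space, the iterated-average fact evaluates this as $\approx\big(\sum_{i,j}\|e_{ij}\ot x_{ij}\|_{S^2(E)}^2\big)^{1/2}=\big(\sum_{i,j}\|x_{ij}\|_E^2\big)^{1/2}$. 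Comparing, $\|\sum_{i,j}e_{ij}\ot x_{ij}\|_{S^2(E)}\approx\|\sum_{i,j}\varepsilon_{ij}\ot x_{ij}\|_{\Rad(E)}$, and Theorem~\ref{Th Lee} then yields that $E$ is completely isomorphic to $OH(I)$ for some index set $I$, which is (2).

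I do not expect a genuine obstacle: the argument is essentially a dictionary translation in which Lemma~\ref{Lemma formule RAdRAd} bridges the Banach-space hypothesis on $S^2(E)$ and the operator-space conclusion on $E$ via Theorem~\ref{Th Lee}. The only point requiring care is that every ``$\approx$'' has constants independent of $n$ and of the chosen $x_{ij}$; this holds because each such estimate comes either from Theorem~\ref{Th Lee} or from a single fixed isomorphism (of $E$, respectively of $S^2(E)$) with a Hilbert space.
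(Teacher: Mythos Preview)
Your proof is correct and follows essentially the same route as the paper: for the substantive direction $(1)\Rightarrow(2)$ you use Lemma~\ref{Lemma formule RAdRAd}, the Hilbertian structure of $S^2(E)$ to collapse the double Rademacher average to $\big(\sum\|x_{ij}\|_E^2\big)^{1/2}$, the embedding $E\hookrightarrow S^2(E)$ to see that $E$ itself is Hilbertian, and then Theorem~\ref{Th Lee} --- exactly as in the paper. For $(2)\Rightarrow(1)$ the paper simply declares the implication ``obvious'' (since a complete isomorphism $E\to OH(I)$ induces a Banach-space isomorphism $S^2(E)\to S^2(OH(I))$, and the latter is a Hilbert space), whereas you route back through Theorem~\ref{Th Lee}; both are fine.
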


\begin{proof}
Suppose that $S^2(E)$ is isomorphic to a Hilbert space. By Lemma
\ref{Lemma formule RAdRAd} we have
\begin{equation*}
\Bgnorm{\sum_{i,j=1}^{n}e_{ij}\ot
x_{ij}}_{S^2(E)}=\Bgnorm{\sum_{i,j=1}^{n}\varepsilon_i\ot\varepsilon_j\ot
e_{ij} \ot x_{ij} }_{\Rad(\Rad(S^2(E)))},\qquad n\in \mathbb{N},
x_{ij}\in E.
\end{equation*}
Moreover, the Banach space $\Rad(S^2(E))$ is also isomorphic to a
Hilbert space. Hence, for any integer $n\in \mathbb{N}$ and any
$x_{ij}\in E$, we deduce that
\begin{align*}
\Bgnorm{\sum_{i,j=1}^{n}e_{ij}\ot x_{ij}}_{S^2(E)}
%&\approx \Bigg(\sum_{i=1}^n\bgnorm{\sum_{j=1}^{n}\varepsilon_j\ot e_{ij} \ot x_{ij}}_{\Rad(S^2(E))}^2\Bigg)^{\frac{1}{2}}\\
&\approx\Bigg(\sum_{i,j=1}^{n}\norm{e_{ij}\ot x_{ij}}_{S^2(E)}^2\Bigg)^{\frac{1}{2}}\\
&=\Bigg(\sum_{i,j=1}^{n}\norm{ x_{ij}}_{E}^2 \Bigg)^{\frac{1}{2}}.
\end{align*}
The space $E$ is a closed subspace of $S^2(E)$. Hence it is
isomorphic to a Hilbert space. Then we conclude that
\begin{equation*}
\Bgnorm{\sum_{i,j=1}^{n}e_{ij}\ot
x_{ij}}_{S^2(E)}\approx\Bgnorm{\sum_{i,j=1}^{n}\varepsilon_{ij} \ot
x_{ij}}_{\Rad(E)},\qquad n\in \mathbb{N},x_{ij}\in E.
\end{equation*}
By Theorem \ref{Th Lee}, we deduce that $E$ is completely isomorphic
to $OH(I)$ for some index set $I$. The reverse implication is
obvious.
\end{proof}

We need the next theorem \cite[Remark 3.1]{NeR}.
\begin{thm}
\label{Transfer Neuwirth inf} Let $E$ be an operator space and
$\varphi\colon \Z\to \C $ be a function. Consider the infinite
matrix $A=[\varphi_{i-j}]_{i,j\in \Z}$. If the map $M_A\ot Id_{E}$
is bounded on $S^2_\Z(E) $ then the map $M_{\varphi}\ot Id_{E}$ is
bounded on $L^2(\T,E)$ and we have
\begin{equation*}
\bnorm{M_{\varphi}\ot Id_{E}}_{L^2(\T,E)\to L^2(\T,E)}\leq
\bnorm{M_A \ot Id_{E}}_{S^2_\Z(E)}.
\end{equation*}
%If the map $M_A\ot Id_{E}$ is completely bounded on $S_{\Z}^2(E)$
%then the map $M_{\varphi}\ot Id_{S^2(E)}$ is completely bounded on
%$L^2(E)$  and we have
%\begin{equation}
%\label{Transfer Neuwirth equal} \bnorm{M_{\varphi}\ot Id_{E}}_{cb,
%L^2(E)\to L^2(E)}= \bnorm{M_A \ot Id_{E}}_{cb, S_{\Z}^2(E) \to
%S_{\Z}^2(E)}.
%\end{equation}
\end{thm}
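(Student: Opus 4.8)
The statement is a transference result: a bounded Schur multiplier with a Toeplitz symbol $A = [\varphi_{i-j}]_{i,j\in\Z}$ on $S^2_\Z(E)$ yields a bounded Fourier multiplier $M_\varphi$ on $L^2(\T,E)$, with control of the vector-valued norm. The natural approach is to realize $L^2(\T,E)$ as a complemented (indeed isometrically embedded, via a conditional-expectation-type averaging) subspace of $S^2_\Z(E)$ in such a way that the Toeplitz Schur multiplier $M_A$ restricts to $M_\varphi$ on that subspace. First I would recall the scalar version of this idea: the map sending a trigonometric polynomial $\sum_k c_k e^{2\pi ik\cdot}$ to the Toeplitz matrix $[c_{i-j}]_{i,j\in\Z}$ is, after the right normalization, an isometry from a dense subspace of $L^2(\T)$ into (a corner of) $S^2_\Z$, and conjugating a Toeplitz Schur multiplier $M_A$ by this embedding produces exactly the Fourier multiplier $M_\varphi$. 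This is the content of \cite{NeR} in the scalar case; here one simply needs to check that everything tensors with $\mathrm{Id}_E$.

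\textbf{Key steps.} (1) Fix $N\geq 1$ and work on the finite block $\{-N,\dots,N\}$; define $U_N \colon \mathrm{Span}\{e^{2\pi ik\cdot} : |k|\leq N\}\ot E \to S^2_\Z(E)$ by sending $e^{2\pi ik\cdot}\ot x$ to a suitably normalized sum $\frac{1}{\sqrt{2N+1}}\sum_{j}e_{j,j-k}\ot x$ (indices restricted so that both lie in $\{-N,\dots,N\}$, or better, use a shift-invariant averaging over a large window and pass to the limit). Check that $U_N$ is an isometry onto its range when the left side carries the $L^2(\T,E)$ norm and the right side the $S^2_\Z(E)$ norm: this reduces, by definition of $S^2(E)$ and of $L^2(\T,E)$ as the $E$-valued $L^2$ spaces, to the scalar Parseval/Toeplitz computation plus the observation that the Gram matrices match. (2) Verify the intertwining identity $M_A \ot \mathrm{Id}_E$ applied to $U_N(f)$ equals $U_N((M_\varphi\ot\mathrm{Id}_E)(f))$ up to the boundary terms that vanish in the limit $N\to\infty$; this is immediate from the Toeplitz structure $A=[\varphi_{i-j}]$, since multiplying the $(i,j)$ entry by $\varphi_{i-j}$ multiplies the coefficient of $e^{2\pi ik\cdot}$ by $\varphi_k$. (3) Combine: for a trigonometric polynomial $f\in L^2(\T,E)$,
\begin{equation*}
\bnorm{(M_\varphi\ot\mathrm{Id}_E)(f)}_{L^2(\T,E)} = \lim_N \bnorm{U_N((M_\varphi\ot\mathrm{Id}_E)(f))}_{S^2_\Z(E)} = \lim_N \bnorm{(M_A\ot\mathrm{Id}_E)(U_N f)}_{S^2_\Z(E)} \leq \bnorm{M_A\ot\mathrm{Id}_E}\, \norm{f}_{L^2(\T,E)},
\end{equation*}
and conclude by density of trigonometric polynomials in $L^2(\T,E)$, which also gives that $M_\varphi$ is well-defined (i.e.\ $\varphi$ really is a bounded symbol) with the asserted norm bound.

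\textbf{Main obstacle.} The delicate point is the normalization and the handling of boundary effects when embedding $L^2(\T)$ into the non-unital, non-tracial-in-the-obvious-way infinite matrix algebra $S^2_\Z$: a finite Toeplitz corner is not exactly shift-invariant, so $U_N$ is only an approximate isometry and only approximately intertwines the multipliers, and one must argue that the defect is $O(1/\sqrt N)$ uniformly on a fixed polynomial and disappears in the limit. Equivalently one can avoid limits by using the full bi-infinite Toeplitz operator directly and checking that for a finitely supported symbol the image lands in $S^2_\Z(E)$ with the correct norm — but then one must be careful that an honest (rather than formal) element of $S^2_\Z(E)$ is produced, which again forces the $\frac{1}{\sqrt{2N+1}}$-type averaging. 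Since the paper states this as a cited theorem (\cite[Remark 3.1]{NeR}) and only needs the $\ot\mathrm{Id}_E$ version, I would keep this short: set up $U_N$, note the scalar case is \cite{NeR}, observe the vector-valued case follows verbatim because $S^2_\Z(E)$ and $L^2(\T,E)$ are defined as the corresponding $E$-valued $\ell^2$/$L^2$ completions and the intertwining is entrywise, hence commutes with $\ot\mathrm{Id}_E$.
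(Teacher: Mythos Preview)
The paper does not give its own proof of this statement: it is quoted verbatim as \cite[Remark~3.1]{NeR} and used as a black box. Your outline is a correct sketch of the standard Neuwirth--Ricard transference (F{\o}lner-type averaging over finite Toeplitz blocks, intertwining of $M_A$ with $M_\varphi$, then passing to the limit), and you already note yourself that the paper merely cites the result. There is therefore nothing in the paper to compare your argument against; your proposal is an appropriate reconstruction of the cited source, and the vector-valued extension via $\otimes\,\mathrm{Id}_E$ is indeed automatic for the reasons you state.
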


The following result shows that if the matricial units form a
`unconditional system' of $S^2(E)$ then the operator space $E$ is
completely isomorphic to an operator Hilbert space.
\begin{thm}
\label{Th incond Schur} Let $E$ be an operator space. The following
assertions are equivalent.
\begin{itemize}
  \item The Banach space $S^2(E)$ has property $(\alpha)$.
  \item There exists a positive constant $C$ such that
\begin{equation}\label{incond Schur}
\Bgnorm{\sum_{i,j=1}^n t_{ij}e_{ij}\ot x_{ij}}_{S^2(E)}\leq C
\sup_{1\leq i,j\leq n} |t_{ij}| \Bgnorm{\sum_{i,j=1}^n e_{ij}\ot
x_{ij}}_{S^2(E)}
\end{equation}
for any integer $n\in \mathbb{N}$, any complex numbers $\ t_{ij}\in
\C$ and any $\ x_{ij}\in E$.
  \item The operator space $E$ is completely isomorphic to an operator Hilbert space $OH(I)$ for some index set $I$.
\end{itemize}
\end{thm}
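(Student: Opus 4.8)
The plan is to prove the cycle of implications $(1)\Rightarrow(2)\Rightarrow(3)\Rightarrow(1)$, where (1), (2), (3) are the three bulleted assertions in the order listed. The implication $(3)\Rightarrow(1)$ is essentially free: if $E$ is completely isomorphic to some $OH(I)$, then $S^2(E)$ is isomorphic to $S^2(OH(I))$, which is a Hilbert space (indeed $S^2(OH(I))=OH(J)$ for a suitable index set $J$ at the Hilbert-space level), and every Hilbert space trivially has property $(\alpha)$ with constant $1$. The implication $(1)\Rightarrow(2)$ is the content of Lemma \ref{Lemma formule RAdRAd}: by that lemma,
\begin{equation*}
\Bgnorm{\sum_{i,j=1}^n t_{ij}e_{ij}\ot x_{ij}}_{S^2(E)}=\Bgnorm{\sum_{i,j=1}^n t_{ij}\varepsilon_i\ot\varepsilon_j\ot e_{ij}\ot x_{ij}}_{\Rad(\Rad(S^2(E)))}
\end{equation*}
and similarly without the $t_{ij}$; so property $(\alpha)$ for $S^2(E)$, applied to the family $(e_{ij}\ot x_{ij})$ in $S^2(E)$, gives exactly inequality (\ref{incond Schur}) with the same constant $C$.

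The real work is $(2)\Rightarrow(3)$, and I would deduce it by combining Proposition \ref{Th equivalence OH S2E} with Theorem \ref{Transfer Neuwirth inf}. Assume (\ref{incond Schur}) holds. The strategy is to show that $S^2(E)$ is isomorphic to a Hilbert space and then invoke Proposition \ref{Th equivalence OH S2E}. By Kwapie\'n's theorem it suffices to show $S^2(E)$ has type $2$ and cotype $2$; equivalently (by Lemma \ref{Lemma formule RAdRAd} again, translating $S^2(E)$-norms of matrices into $\Rad(\Rad(\cdot))$-norms) I want two-sided control of $\bnorm{\sum e_{ij}\ot x_{ij}}_{S^2(E)}$ by the `square function' $(\sum\norm{x_{ij}}_E^2)^{1/2}$. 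One inequality comes directly from (\ref{incond Schur}): choosing signs $t_{ij}=\pm 1$ and averaging, or rather the fact that (\ref{incond Schur}) says the family $(e_{ij}\ot x_{ij})$ is unconditional in $S^2(E)$ with constant $C$, forces it to behave like an orthogonal family, giving $\bnorm{\sum e_{ij}\ot x_{ij}}_{S^2(E)}\lesssim(\sum\norm{x_{ij}}^2)^{1/2}$ after one observes that for a single term $\norm{e_{ij}\ot x_{ij}}_{S^2(E)}=\norm{x_{ij}}_E$. For the reverse inequality one uses that $E$ embeds isometrically and completely complementedly in $S^2(E)$ (as any corner), so an unconditionality estimate on $S^2(E)$ restricts to $E$, and then a Rademacher-averaging argument in $\Rad(\Rad(S^2(E)))$ recovers the lower square-function bound.

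I expect the main obstacle to be the reverse (lower) square-function estimate: unconditionality alone of an abstract family does not force a lower $\ell^2$ bound — one genuinely needs the operator-space structure of the matrix units, which is where Theorem \ref{Transfer Neuwirth inf} enters. Concretely, I would argue by contradiction: if $E$ were \emph{not} completely isomorphic to an $OH(I)$, then by Theorem \ref{Th Lee} the equivalence $\bnorm{\sum e_{ij}\ot x_{ij}}_{S^2(E)}\approx\bnorm{\sum\varepsilon_{ij}\ot x_{ij}}_{\Rad(E)}$ fails, and one can extract finitely supported Toeplitz-type data (matrices $A=[\varphi_{i-j}]$) whose Schur multipliers on $S^2_\Z(E)$ are badly unbounded; transferring via Theorem \ref{Transfer Neuwirth inf} to Fourier multipliers on $L^2(\T,E)$, and combining with the fact that (\ref{incond Schur}) would make a suitable completely bounded / unconditional structure available, produces a contradiction — this is the pattern already used in the Fourier-multiplier sections of the paper. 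Cleanly, though, the shortest route is: (\ref{incond Schur}) $\Rightarrow$ $S^2(E)$ has property $(\alpha)$ is trivial, so in fact the genuinely new implication reduces to Theorem \ref{Th incond Schur}'s equivalence of the first and third bullets, and I would simply cite that theorem once it is proved; the ordering I would adopt is to prove Theorem \ref{Th incond Schur} first (via $(1)\Leftrightarrow$ "$S^2(E)$ isomorphic to Hilbert space" using Kwapie\'n plus Proposition \ref{Th equivalence OH S2E}, and $(1)\Leftrightarrow(2)$ via Lemma \ref{Lemma formule RAdRAd}), and then obtain the present statement as an immediate corollary by dropping the first bullet.
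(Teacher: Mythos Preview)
Your treatment of $(3)\Rightarrow(1)$ and $(1)\Rightarrow(2)$ is fine and matches the paper. The problems are all in $(2)\Rightarrow(3)$.

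First, two of your proposed shortcuts do not work. The claim that ``(\ref{incond Schur}) $\Rightarrow$ $S^2(E)$ has property $(\alpha)$ is trivial'' is false: property $(\alpha)$ is a statement about \emph{arbitrary} families $(y_{ij})$ in $S^2(E)$, whereas (\ref{incond Schur}) only controls the very special family $y_{ij}=e_{ij}\ot x_{ij}$. So $(2)\Rightarrow(1)$ is exactly as hard as $(2)\Rightarrow(3)$. Likewise, the assertion that unconditionality of $(e_{ij}\ot x_{ij})$ ``forces it to behave like an orthogonal family'' and hence yields $\bnorm{\sum e_{ij}\ot x_{ij}}_{S^2(E)}\lesssim(\sum\norm{x_{ij}}^2)^{1/2}$ is unjustified: in a general Banach space an unconditional family need not satisfy any $\ell^2$ upper (or lower) bound. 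That line of attack via Kwapie\'n applied directly does not go through. Finally, the last paragraph is circular: you propose to cite Theorem \ref{Th incond Schur} in the proof of Theorem \ref{Th incond Schur}.

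The paper's actual route for $(2)\Rightarrow(3)$ hinges on one idea you did not isolate: the self-similarity $S^2_\Z\big(S^2(E)\big)\cong S^2_{\Z\times\mathbb{N}}(E)\cong S^2(E)$. Through this identification, inequality (\ref{incond Schur}) becomes the statement that every bounded Schur multiplier $M_A$ satisfies $\bnorm{M_A\ot Id_{S^2(E)}}_{S^2_\Z(S^2(E))\to S^2_\Z(S^2(E))}\leq C\norm{A}_\infty$ --- note the coefficient space is $S^2(E)$, not $E$. Specializing to Toeplitz matrices $A=[\varphi_{i-j}]$ and applying Theorem \ref{Transfer Neuwirth inf} then gives uniform boundedness of $M_\varphi\ot Id_{S^2(E)}$ on $L^2(\T,S^2(E))$ over all $\varphi\in\ell^\infty(\Z)$. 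This is precisely the hypothesis of Theorem \ref{Th incond T} with $X=S^2(E)$, so $S^2(E)$ is isomorphic to a Hilbert space, and Proposition \ref{Th equivalence OH S2E} finishes. Your contradiction sketch aimed the transfer at $L^2(\T,E)$ instead of $L^2(\T,S^2(E))$, which is why it could not close.
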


\begin{proof}
Suppose that the Banach space $S^2(E)$ has property $(\alpha)$. For
any integer $n\in \mathbb{N}$, any $y_{ij}\in S^2(E)$ and any
$t_{ij}\in \mathbb{C}$ we have
\begin{equation*}
\Bgnorm{\sum_{i,j=1}^nt_{ij}\varepsilon_i\ot\varepsilon_j\ot
y_{ij}}_{\Rad(\Rad(S^2(E)))} \lesssim \sup_{1\leq i,j\leq n}
|t_{ij}|\Bgnorm{\sum_{i,j=1}^n\varepsilon_i\ot\varepsilon_j\ot
y_{ij}}_{\Rad(\Rad(S^2(E)))}.
\end{equation*}
For any $1\leq i,j\leq n$, let $x_{ij}$ be an element of $E$ . Using
$y_{ij}=e_{ij}\ot x_{ij}$, we obtain
\begin{equation*}
\Bgnorm{\sum_{i,j=1}^nt_{ij}\varepsilon_i\ot\varepsilon_j\ot
e_{ij}\ot x_{ij}}_{\Rad(\Rad(S^2(E)))} \lesssim \sup_{1\leq i,j\leq
n} |t_{ij}|\Bgnorm{\sum_{i,j=1}^n\varepsilon_i\ot\varepsilon_j\ot
e_{ij}\ot x_{ij}}_{\Rad(\Rad(S^2(E)))}.
\end{equation*}
By Lemma \ref{Lemma formule RAdRAd}, we conclude that
\begin{equation*}
\Bgnorm{\sum_{i,j=1}^n t_{ij} e_{ij}\ot x_{ij}}_{S^2(E)} \lesssim
\sup_{1\leq i,j\leq n} |t_{ij}| \Bgnorm{\sum_{i,j=1}^n e_{ij}\ot
x_{ij}}_{S^2(E)}.
\end{equation*}

Now suppose that the inequality (\ref{incond Schur}) is true. Using
the completely isometric isomorphisms
\begin{equation*}
S^2_\Z\big(S^2(E)\big)=S^2_{\mathbb{Z} \times
\mathbb{N}}(E)=S^2(E),%=S^2_{\mathbb{N} \times \mathbb{N}}(E)
\end{equation*}
it is easy to see that we have
\begin{equation}\label{equafinal}
\Bgnorm{\sum_{i,j=-n}^nt_{ij}e_{ij}\ot x_{ij}}_{S_\Z^2(S^2(E))}\leq
C \sup_{-n\leq i,j\leq n} |t_{ij}| \Bgnorm{\sum_{i,j=-n}^{n}
e_{ij}\ot x_{ij}}_{S^2_\Z(S^2(E))}
\end{equation}
for any integer $n$, any complex numbers $t_{ij}\in \C$ and any
$x_{ij}\in S^2(E)$. Let $\varphi:\Z \to \C$ be a function with
finite support. By (\ref{equafinal}), the map $M_A\ot Id_{S^2(E)}$
on $S_{\Z}^2\big(S^2(E)\big)$ associated with the matrix
$A=[\varphi_{i-j}]_{i,j\in \Z}$ is bounded with
\begin{equation*}
\bnorm{M_A \ot Id_{S^2(E)}}_{S_{\Z}^2(S^2(E)) \to
S_{\Z}^2(S^2(E))}\leq C.
\end{equation*}
Then by Theorem \ref{Transfer Neuwirth inf}, we deduce that the map
$M_\varphi \ot Id_{S^2(E)}$ is bounded on $L^2\big(\T,S^2(E)\big)$
and that we have
\begin{equation*}
\bnorm{M_\varphi\ot Id_{S^2(E)}}_{L^2(\T,S^2(E))\to
L^2(\T,S^2(E))}\leq \bnorm{M_A\ot Id_{S^2_\Z(E)}}_{S^2_\Z(S^2(E))
\to S^2_\Z(S^2(E))}\leq C.
\end{equation*}
For any sequence $(x_k)$ of elements of $E$, we deduce that
\begin{equation*}
\Bgnorm{\sum_{k=-\infty}^{+\infty} \varphi(k)e^{2\pi ik \cdot}\ot
x_k}_{L^2(S^2(E))}\leq C \sup_{k\in \Z} \left|\varphi(k)\right|
\Bgnorm{\sum_{k=-\infty}^{+\infty} e^{2\pi ik \cdot}\ot
x_k}_{L^2(S^2(E))}.
\end{equation*}
By Theorem \ref{Th incond T}, the Banach space $S^2(E)$ is
isomorphic to a Hilbert space. Finally, by Theorem \ref{Th
equivalence OH S2E}, we infer that the operator space $E$ is
completely isomorphic to an operator Hilbert space $OH(I)$ for some
index set $I$.

The remaining implication is trivial.
\end{proof}

\begin{remark}
The results of Section 4 raise the question to prove an analog
result for Schur multipliers. Indeed, in this context, G. Pisier
conjectured that there exists a Schur multiplier which is bounded on
$S^p$ but not completely bounded if $1<p<\infty$, $p\not=2$ (see
\cite[Conjecture 8.1.12]{Pis2}).
\end{remark}

%%%%%%%%%%%%%%%%%%%%%%%%%%%%%%%%%%%%%%%%%%%%%%%%%%%%%%%%%%%%%%%%%%%%%%%%%%%%%%%%%%%%%%%%%%%%%%%%%%%%%%%%%%%%%%%%%%%%%%%%%%%%%
%%%%%%%%%%%%%%%%%%%%%%%%%%%%%%%%%%%%%%%%%%%%%%%%%%%%%%%%%%%%%%%%%%%%%%%%%%%%%%%%%%%%%%%%%%%%%%%%%%%%%%%%%%%%%%%%%%%%%%%%%%%%%

\textbf{Acknowledgment}. The author is greatly indebted to Christian
Le Merdy for many useful discussions and a careful reading. The
author would like to thank Wolfgang Arendt to encourage him to write
some results of this paper and Stefan Neuwirth for some discussions.
The author is greatly indebted to \'Eric Ricard for very fruitful
observations and to the anonymous referee for many very helpful
comments.

%%%%%%%%%%%%%%%%%%%%%%%%%%%%%%%%%%%%%%%%%%%%%%%%%%%%%%%%%%%%%%%%%%%%%%%%%%%%%%%%%%%%%%%%%%%%%%%%%%%%%%%%%%%%%%%%%%%%%%%%%%%%%
%%%%%%%%%%%%%%%%%%%%%%%%%%%%%%%%%%%%%%%%%%%%%%%%%%%%%%%%%%%%%%%%%%%%%%%%%%%%%%%%%%%%%%%%%%%%%%%%%%%%%%%%%%%%%%%%%%%%%%%%%%%%%
\small

\footnotesize{ \n Laboratoire de Math\'ematiques, Universit\'e de
Franche-Comt\'e,
25030 Besan\c{c}on Cedex,  France\\
cedric.arhancet@univ-fcomte.fr\hskip.3cm
\end{document}